\documentclass[11pt,reqno]{amsart}

\usepackage{amsmath,amssymb,amsthm, amsfonts}   
\usepackage{bbm} 
\usepackage{graphicx}   
\usepackage{pstricks}   
\usepackage[colorlinks=true, linkcolor=blue, citecolor=magenta]{hyperref} 
\usepackage{url}
\usepackage{tasks}
\usepackage{enumerate}
\usepackage{caption,subcaption} 

\usepackage{mathtools}      
\mathtoolsset{showonlyrefs} 

\usepackage{mathrsfs}
\usepackage{multicol}   
\usepackage{fullpage}   

\usepackage[foot]{amsaddr}

%
%
%
\renewcommand\thesection{\arabic{section}}
\setcounter{section}{0}

\newtheorem{thm}{Theorem}[section]
\newtheorem{cor}[thm]{Corollary}
\newtheorem{prop}[thm]{Proposition}
\newtheorem{lem}[thm]{Lemma}

\newtheorem*{acknowledgements*}{Acknowledgements}

\theoremstyle{definition}
\newtheorem{defin}[thm]{Definition}

\theoremstyle{remark}
\newtheorem{rem}[thm]{Remark}

\title{Geometric differentiability of Riemann's non-differentiable function.
}
\author{Daniel Eceizabarrena}
\address{BCAM - Basque Center for Applied Mathematics. Alameda de Mazarredo 14, 48009 Bilbao, Spain.  e-mail: {\tt deceizabarrena@bcamath.org}}


\begin{document}

\begin{abstract}
Riemann's non-differentiable function is a classic example of a continuous function which is almost nowhere differentiable, and many results concerning its analytic regularity have been shown so far.
However, it can also be given a geometric interpretation, so questions on its geometric regularity arise. This point of view is developed in the context of the evolution of vortex filaments, modelled by the Vortex Filament Equation or the binormal flow, in which a generalisation of Riemann's function to the complex plane can be regarded as the trajectory of a particle. The objective of this document is to show that the trajectory represented by its image does not have a tangent anywhere. For that, we discuss several concepts of tangent vectors in view of the set's irregularity. 
\end{abstract}

\maketitle


\textit{Keywords: } Riemann's non-differentiable function, vortex filament, trajectory, fractal, tangent vectors. 

This document follows the expository work \cite{Eceizabarrena2019} and  \cite{Eceizabarrena2019_Part1} by the author. 

\section{Introduction}

According to mathematical folklore, Riemann's non-differentiable function was the first example of a function regular enough to be continuous, yet wildly misbehaved with respect to differentiation. In a conference to the Royal Prussian Academy of Sciences in 1872 \cite{Weierstrass1872}, Weierstrass explained that Riemann had provided it against what was the general belief at the time: that the derivative of a continuous function could be problematic only at certain points. This function was defined as
\begin{equation}\label{RiemannFunctionOriginal}
R(x) = \sum_{n=1}^{\infty}{\frac{ \sin{ (n^2x ) } }{ n^2 }},
\end{equation}
and Riemann conjectured that it was continuous everywhere, but nowhere differentiable. However, he never wrote a proof of this fact. Weierstrass himself regarded it as a tough problem, and the first result was given by Hardy \cite{Hardy1915} half a century later, in 1915, when he stated that $R$ is not differentiable in points $\pi x$ such that $x$ is irrational, and also if $x$ belongs to a certain subset of the rationals. Much later, in the 1970s, Gerver \cite{Gerver1970,Gerver1971} disproved Riemann's conjecture by showing that $R$ was differentiable in $\pi x$ if and only if $x = p/q$ with $p$ and $q$ coprime and both odd integers, and that the value of the derivative at those points is $-1/2$. More recently, further questions of regularity have been tackled \cite{ChamizoCordoba1999,Duistermaat1991,Jaffard1996, JaffardMeyer1996}.

A usual technique consists in analysing the function
\begin{equation}\label{RiemannFunctionDuistermaat}
\phi_D(x) = \sum_{n=1}^{\infty}{ \frac{ e^{i \pi n^2 x} }{ i \pi n^2 }},
\end{equation}
a natural complex generalisation of $R$ satisfying $\pi  \operatorname{Re}\phi_D(x) = R(\pi x) $. In \cite{Duistermaat1991}, Duistermaat computed the asymptotic behaviour of $\phi_D$ around all rational points using Jacobi's theta function and its relationship with the modular group, and a certain selfsimilar behaviour was brought to light. A partial analysis of irrational points was done by means of the continued fraction approximations. In \cite{Jaffard1996}, Jaffard carried on a thorough analysis on the regularity of $R$ and $\phi_D$ in the context of multifractality, showing among other things that 
\begin{equation}\label{SpectrumOfSingularities}
d(\alpha) = 4\alpha - 2, \qquad \forall \alpha \in [1/2,3/4], 
\end{equation} 
where $d(\alpha)$ is the spectrum of singularities of $R$, that is, the Hausdorff dimension of the set of points where the supremum H\"older regularity of $R$ is $\alpha$. This kind of functions with a non-trivial spectrum of singularities are called multifractal. He proved this by means of the wavelet transform and giving a connection between the regularity of $R$ at $x$ and the diophantine properties of the continued fraction approximations to $x$. Also, he established for $R$ the validity of the multifractal formalism proposed by Frisch and Parisi \cite{FrischParisi1985}.

Riemann's function was designed as an example showing regularity pathologies and defined in a completely analytic way, so one should not be surprised by the fact that it has traditionally been studied from an analytic point of view. Nevertheless, it was shown in \cite{DeLaHozVega2014} that it plays a surprising geometric role in the context of the binormal flow, a model for one vortex filament dynamics described by the equation
\[ \boldsymbol{X}_t = \boldsymbol{X}_s \times \boldsymbol{X}_{ss}, \qquad \text{ or equivalently } \qquad \boldsymbol{X}_t  = \kappa \, \boldsymbol{B} \]
for a curve in space $\boldsymbol{X}: \mathbb{R}^2 \to \mathbb{R}^3$ parametrised in arclength $s$ and in time $t$, with curvature $\kappa = \kappa(s,t)$, binormal vector $\boldsymbol{B} = \boldsymbol{B}(s,t)$ and a given initial condition $\boldsymbol{X}(s,0)$. This equation is often referred to as the Vortex Filament Equation (VFE). They analysed the evolution $\boldsymbol{X}_M$ of a planar regular polygon of $M \in \mathbb{N}$ sides, and they followed the time trajectory of a corner, which is a curve in space. They showed numerical evidence that, once this trajectory is properly rescaled, it converges to a plane curve when $M \to \infty$. More precisely, assuming the usual identification $\mathbb{C} \simeq \mathbb{R}^2$, this curve is
\begin{equation}\label{RiemannFunction}
\phi(x) = \sum_{k \in \mathbb{Z}}{ \frac{e^{-4\pi^2 i k^2 x}-1}{-4\pi^2k^2} }, 
\end{equation} 
one more possible generalisation of $R$, since one can check that 
\begin{equation}\label{RelationshipDuistermaatUs}
\phi(x) = -\frac{i}{2\pi}\phi_D(-4\pi x) + i x + \frac{1}{12}.
\end{equation} 
Difficulties to prove this convergence analytically arise because the problem is translated from the VFE to the Nonlinear Schr\"odinger equation, thanks to the so called Hasimoto transformation \cite{Hasimoto1972}
\begin{equation}\label{Hasimoto}
\psi(s,t) = \kappa(s,t) \, e^{i\int_0^s{\tau(\sigma,t)\,d\sigma}},
\end{equation} 
where $\tau$ is the torsion of $\boldsymbol{X}$. Indeed, $\psi$ satisfies 
\begin{equation}\label{NLS}
\partial_t\psi = i\left( \partial_s^2\psi + \frac12\left( \left| \psi \right|^2 + A(t) \right)\psi \right) 
\end{equation} 
for some function $A(t) \in \mathbb{R}$. According to \eqref{Hasimoto}, computing a solution for \eqref{NLS} with the corresponding initial datum $\psi(s,0)$ amounts to obtaining the curvature and the torsion of $\boldsymbol{X}$. If they are non-vanishing and smooth enough, they determine the curve up to rigid motions by the Frenet-Serret formulas, so that $\boldsymbol{X}$ could be recovered.

A reasonable way to model the initial curvature of the $M$-sided polygon is to place $M$ equidistributed Dirac deltas in the interval $[0,2\pi]$ multiplied by coefficients according to the Gauss-Bonnet theorem, and to extend it periodically to the real line. Then, since a planar curve has null torsion, according to \eqref{Hasimoto} we may work with the initial condition
\begin{equation}\label{InitialDatumM}
\psi_M(s,0) = \frac{2\pi}{M}\sum_{k \in \mathbb{Z}}{\delta\left(s - \frac{2\pi }{M}\, k\right)}.
\end{equation} 
However, the Frenet-Serret frame for curves with vanishing curvature may not be well-defined, so the parallel frame is used instead, where the normal plane is given an alternative basis $\{\boldsymbol{e_1},\boldsymbol{e_2}\}$ such that the derivatives of these vectors do not depend on themselves but only on the tangent. Thus, since $\boldsymbol{T}$ is determined by the curve, the evolution of the frame is well-defined as long as the tangent is well-defined. The equations analogue to the Frenet-Serret system for this frame are
\begin{equation}\label{ParallelFrame}
\left(
\begin{array}{c}
\boldsymbol{T} \\
\boldsymbol{e_1} \\
\boldsymbol{e_2}
\end{array}
\right)_s
=
\left( 
\begin{array}{ccc}
0 & \alpha & \beta \\
-\alpha & 0 & 0 \\
-\beta & 0 & 0
\end{array}
\right)
\,
\left(
 \begin{array}{c}
\boldsymbol{T} \\
\boldsymbol{e_1} \\
\boldsymbol{e_2}
\end{array}
\right).
\end{equation}
Here, $\alpha$ and $\beta$ are functions of $s$ and $t$ that adapt perfectly to the setting of the Hasimoto transformation \eqref{Hasimoto}, since one can check that $\psi = \alpha + i\beta$. Therefore, first solving the equation \eqref{NLS} with the datum \eqref{InitialDatumM} and then integrating the system \eqref{ParallelFrame}, one should be able to recover $\boldsymbol{T}$ and $\boldsymbol{X}$. The trajectory we seek is $\boldsymbol{X}_M(0,t)$ because $s=0$ corresponds to a corner in the initial datum \eqref{InitialDatumM}.

 We may try to approximate the solution by working with the free Schr\"odinger equation instead of the nonlinear system \eqref{NLS}-\eqref{InitialDatumM}. We may also rescale the initial datum to make it independent of $M$. Hence, the system we have to deal with is
\begin{equation}\label{FreeSchrodingerEquation}
\partial_t\psi = i \, \partial_s^2\psi, \qquad \qquad \psi(s,0) = \sum_{k \in \mathbb{Z}}\delta(s-k).
\end{equation}
To obtain the linear trajectory corresponding to $\boldsymbol{X}_M(0,t)$, heuristics in parallelism to the nonlinear setting suggest to integrate the solution of \eqref{FreeSchrodingerEquation} twice in space, or once in time, to get
\begin{equation}\label{DerivationOfPhi}
 i \int_0^t{ \psi(0,t')\, dt' } = i \int_0^t{ e^{i t' \partial_s^2}\left( \sum_{k \in \mathbb{Z}}\delta(\cdot - k) \right)(0)  \,dt' } = i \int_0^t { \sum_{k \in \mathbb{Z}}{ e^{-4\pi^2 i k^2 t'}   }  \,dt' } = \phi(t),
\end{equation}
which is precisely the proposed version of Riemann's function \eqref{RiemannFunction}. The shape of this trajectory is shown in 
Figure~\ref{FigureCurve}. It was shown numerically in \cite{DeLaHozVega2014} that this approximation, though very crude a priori, adapts very well to reality. Indeed, after rescaling, they show evidence that $\lim_{M \to \infty}\boldsymbol{X}_M(0,t) = \phi(t)$. 

As a curiosity, this model grasps the Talbot effect described in optics \cite{Talbot1836} in a different setting than the original one. Indeed, if instead of space, time is fixed at a rational $p/q$, the curve obtained is a skew-polygon of generally $Mq$ sides. This is also directly related to the axis-switching phenomenon observed in several experiments with non-circular jets, and in particular when working with nozzles with corners, such as of triangular or rectangular shape (see, for instance, \cite{GutmarkGrinstein1999}). 

Everything explained so far suggests that $\phi$ is the natural version of Riemann's function to analyse geometrically, rather than $\phi_D$. Indeed, even if in view of \eqref{RelationshipDuistermaatUs} qualitative analytic results for $\phi_D$ are valid for $\phi$ and vice versa, both functions describe substantially different geometric objects, the linear term on the right in \eqref{RelationshipDuistermaatUs} playing an important role (See Figure~\ref{FigureCurves}, where Figure~\ref{FigureCurveDuistermaat} corresponds to $\phi(x)-ix$). Moreover, both functions being non-injective, it is difficult to measure its effect in the images. 

\begin{figure}[h]
\centering
\begin{subfigure}{0.475\textwidth}
  \centering
  \includegraphics[width=1\linewidth]{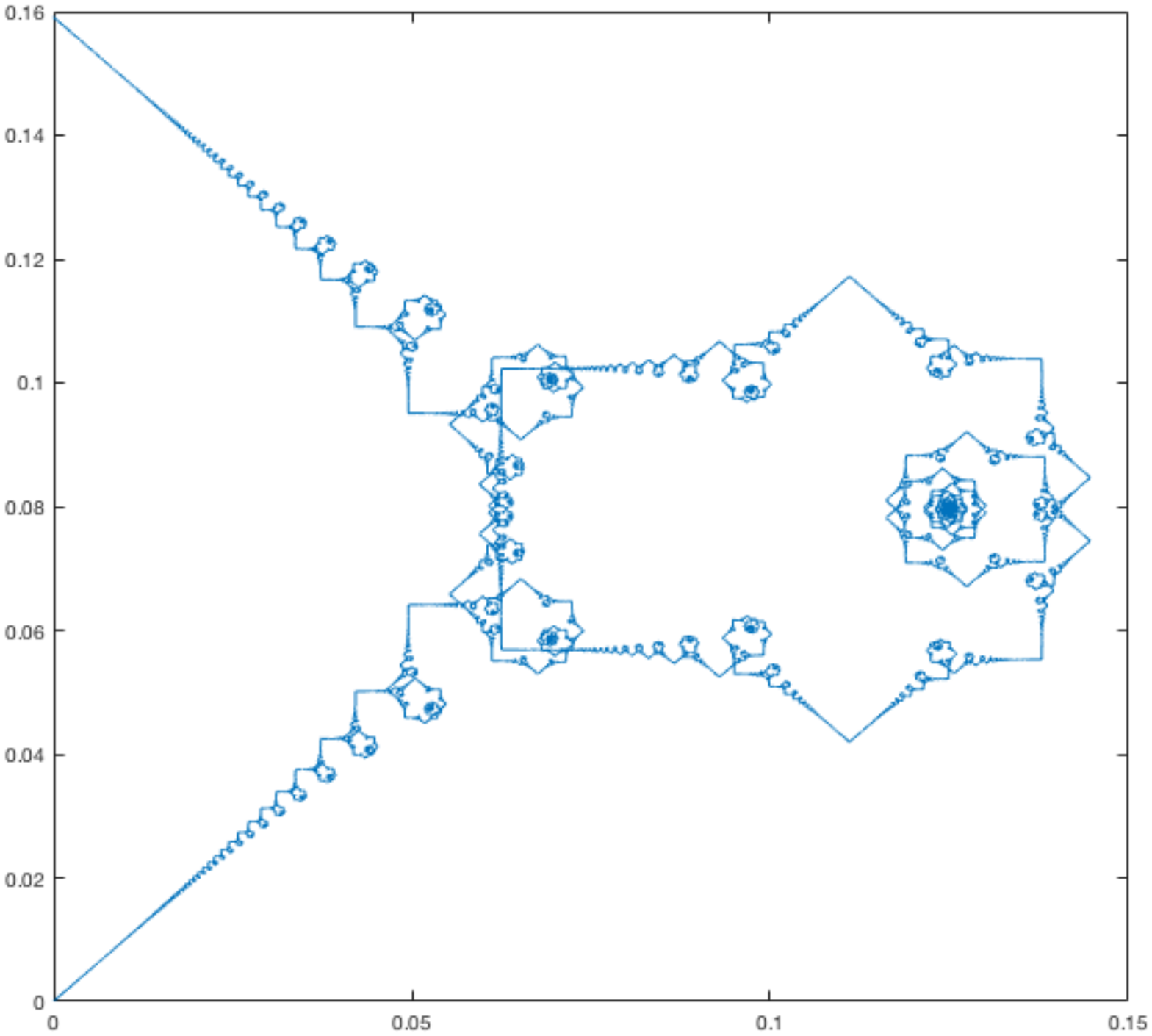}
  \caption{ $ \phi(x) $  }
  \label{FigureCurve}
\end{subfigure}%
\begin{subfigure}{0.05\textwidth}
  \centering
  \hspace{1pt}
\end{subfigure}%
\begin{subfigure}{0.475\textwidth}
  \centering
  \includegraphics[width=0.99\linewidth]{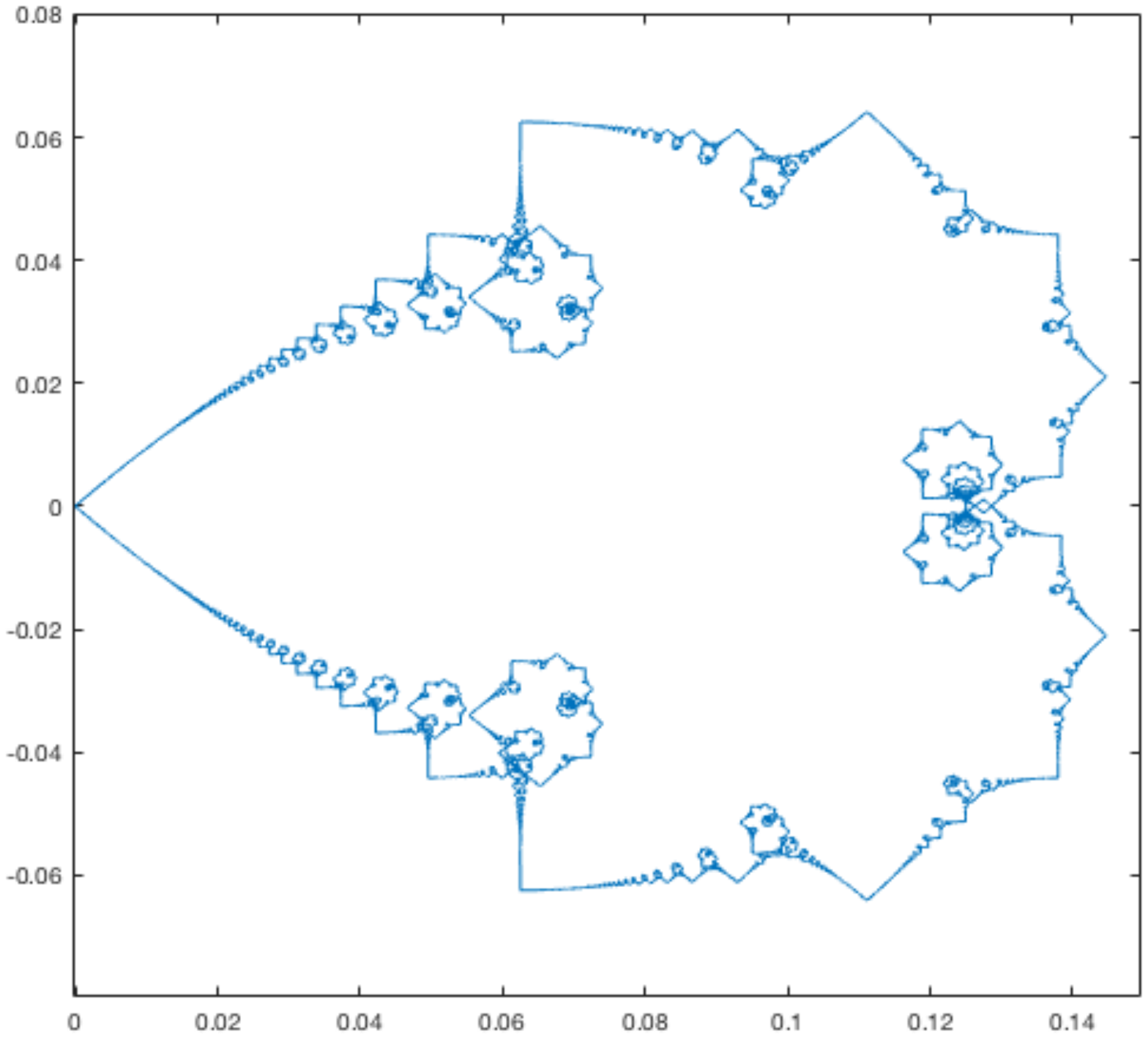}
  \caption{ $ -\frac{i}{2\pi}\phi_D(-4\pi x) + \frac{1}{12} $ }
  \label{FigureCurveDuistermaat}
\end{subfigure}
\caption{The images of the curves in the period $x \in [0,1/2\pi]$ as subsets of the complex plane.
}
\label{FigureCurves}
\end{figure}


Inspired by the works above, especially by \cite{Duistermaat1991}, in \cite{Eceizabarrena2019_Part1} the asymptotic behaviour for $\phi$ is given in all points $t_x = x/2\pi$ with $x$ rational. There, a self-similar structure, already visible in Figure~\ref{FigureCurves},  is analytically shown. As a consequence, the first result on the geometry of $\phi(\mathbb{R})$ is proved in terms of a bound for its Hausdorff dimension, which is
\[ 1 \leq \dim_{\mathcal{H}}\phi(\mathbb{R}) \leq 4/3. \]
In the present document, inspired by the question of which the direction a particle could be in the trajectory of the VFE experiment, we analyse the existence of tangents of $\phi(\mathbb{R})$ from a geometric point of view. The main result in this paper, stated here with no technicalities, is the following.
\begin{thm}\label{TheoremIntro}
Let $\phi$ be Riemann's non-differentiable function defined in \eqref{RiemannFunction}.  There does not exist a point in which $\phi(\mathbb{R})$ has a tangent. 
\end{thm}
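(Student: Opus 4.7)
The plan is to adopt a set-theoretic notion of tangent and rule it out point by point. Concretely, I would work with the \emph{contingent cone} of $\phi(\mathbb{R})$ at a point $p=\phi(t_{0})$: the closed set of unit vectors $v\in S^{1}$ that arise as limits of $(p_{n}-p)/|p_{n}-p|$ along sequences $p_{n}\in\phi(\mathbb{R})\setminus\{p\}$ with $p_{n}\to p$, and declare that $\phi(\mathbb{R})$ has a tangent at $p$ precisely when this cone is an antipodal pair $\{v,-v\}$. A preliminary discussion would record weaker and stronger variants (paratingent, approximate tangent, one-sided tangents) in order to see in what sense the negative result holds. The proof then splits according to the H\"older regularity of $\phi$ at $t_{0}$, which is in turn dictated by the diophantine nature of $t_{0}$ (Jaffard, \cite{Jaffard1996}).

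For parameters $t_{0}$ at which $\phi$ is not classically differentiable, I would exploit the self-similar asymptotic expansion around rationals obtained in \cite{Eceizabarrena2019_Part1}. Schematically, near $t_{0}=p/q$ with $q$ even there is an expansion of the form
\[ \phi(t_{0}+h)-\phi(t_{0}) \;=\; c_{1}(p,q)\,h \;+\; c_{2}(p,q)\,\sqrt{|h|}\,\Psi_{q}(h) \;+\; o\bigl(\sqrt{|h|}\bigr), \]
where $\Psi_{q}$ is a bounded oscillatory function whose image over $(0,\varepsilon)$ spirals around the origin. Choosing two sequences $h_{n},h_{n}'\to 0^{+}$ on which $\Psi_{q}$ tends to distinct unit vectors already yields two non-antipodal directions in the contingent cone; in fact the spiral forces the cone to contain a full arc of $S^{1}$. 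For irrational $t_{0}$, I would transfer this picture by selecting a sequence of continued-fraction convergents $p_{n}/q_{n}\to t_{0}$ with $q_{n}$ even, using the H\"older bound $|\phi(t)-\phi(p_{n}/q_{n})|\le C\,|t-p_{n}/q_{n}|^{1/2}$ to displace the spirals at $\phi(p_{n}/q_{n})$ onto arbitrarily small neighbourhoods of $\phi(t_{0})$.

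The remaining, and genuinely harder, case is when $t_{0}=p/q$ with $p$ and $q$ both odd and coprime, the Gerver points where $\phi$ is classically differentiable. There the local parametrisation at $t_{0}$ contributes only the antipodal pair $\pm\phi'(t_{0})/|\phi'(t_{0})|$ to the contingent, so any obstruction to a tangent must come from parameters $(t_{n})$ that stay bounded away from $t_{0}$ while $\phi(t_{n})\to\phi(t_{0})$. My plan is to manufacture such sequences from the spiral structure of the previous case at nearby non-differentiable rationals, combined with the quasi-periodicity $\phi(x+1/(2\pi))=\phi(x)+i/(2\pi)$ to translate small arcs of $\phi$ into any prescribed neighbourhood of $\phi(t_{0})$, and then a continuity argument on the spiralling arcs to extract parameters whose image approaches $\phi(t_{0})$ along directions different from $\phi'(t_{0})/|\phi'(t_{0})|$. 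The main obstacle is quantitative: one must show that these remote branches not only come close to $\phi(t_{0})$ at arbitrarily small scales, but also approach it along directions that the single analytic derivative cannot cover, and I expect this to require the fine self-similar description of \cite{Eceizabarrena2019_Part1} rather than any purely soft global argument.
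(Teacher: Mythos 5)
Your overall architecture (a pointwise notion of tangent, a case split by the arithmetic type of the parameter, asymptotics near rationals, continued fractions for irrationals) matches the paper's, but two of your three cases rest on a misreading of the local structure of $\phi$, and the third is where the real difficulty lies and is not actually addressed. First, you have the corner/spiral dichotomy backwards. At the rationals where $\phi$ is \emph{not} differentiable (in the paper's normalisation, $q\equiv 0,1,3\pmod 4$), the asymptotic is $\phi(t_{p,q}+h)-\phi(t_{p,q})=e_{p,q}\tfrac{1+i}{\sqrt{2\pi}}\,h^{1/2}\tilde q^{-1/2}+O(h^{3/2})$ with a \emph{constant} unit coefficient (and the conjugate branch for $h<0$): the obstruction is two one-sided tangents meeting at $90^\circ$, not a spiral, and your claim that the cone contains a full arc there is false (though the non-antipodality still kills the tangent). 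Conversely, at the Gerver points where $\phi$ \emph{is} differentiable, the derivative is $0$ (this follows from Gerver's value $-1/2$ for $R$ and the affine relation between $R$ and $\phi$), so the ``antipodal pair $\pm\phi'(t_0)/|\phi'(t_0)|$'' you want to beat does not exist; instead the local expansion is $-16e_{p,q}\tfrac{1-i}{\sqrt{2\pi}}Z(b(h))\,\tilde q^{3/2}h^{3/2}+O(h^{5/2})$ with $Z(b(h))$ winding around the origin, so the spiral is generated \emph{locally} by nearby parameters and already realises every direction of $\mathbb{S}^1$. Your ``genuinely harder case'' is therefore the easy one, and the remote-branch/quasi-periodicity machinery you propose for it is both unnecessary and unsubstantiated (you never show those remote arcs actually accumulate at $\phi(t_0)$).

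Second, the irrational case is the genuinely hard one, and the H\"older transfer you sketch cannot work. Knowing $|\phi(t_0)-\phi(p_n/q_n)|\le C\,|t_0-p_n/q_n|^{1/2}$ says nothing about \emph{directions} from $\phi(t_0)$: to extract a non-trivial arc of directions you must compare the diameter of the local feature at $\phi(p_n/q_n)$ (of size $\sim q_n^{-3/2}$ after the natural rescaling) with the displacement $|\phi(t_0)-\phi(p_n/q_n)|$, and this forces you to track the normalised errors $K_n=q_n^2|\rho-p_n/q_n|$ and to argue differently according to whether $\liminf K_n>0$ or $=0$; in the latter regime the point collapses into the corner of the rescaled picture and a second rescaling exploiting the self-similar term of the asymptotics is needed. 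The paper also needs a uniform-convergence lemma for the rescaled asymptotics as $c_n/q_n$ varies, none of which is visible in your sketch. A further concrete obstruction: your plan requires infinitely many convergents with even denominator, but there are irrationals all of whose convergents have odd denominators (e.g.\ take $a_1$ odd and all later partial quotients even in $q_{n+1}=a_{n+1}q_n+q_{n-1}$), whereas odd denominators are always available in infinite supply, which is what the paper uses. Finally, a minor definitional point: your contingent-cone notion is fine for a negative result since enlarging the set of admissible sequences only enlarges the cone, but to rule out the measure-theoretic tangent of Definition~\ref{DefGeometricTangentForCurves} one still needs the offending directions to be achieved by parameters close to $t_0$ (this is the content of Proposition~\ref{PropConnecting}); your constructions in the rational cases do this, so this is a presentational rather than substantive issue.
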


This result is meaningful because there is not a clear connection between analytic regularity of $\phi$ and geometric tangency to $\phi(\mathbb{R})$. First, the direct analytic approach to tangency is very restricted by the fact that the derivative exists almost nowhere. One could think of checking the points where the derivative exists, but the results of Gerver \cite{Gerver1970,Gerver1971} and \eqref{RelationshipDuistermaatUs} show that the corresponding value of the derivative of $\phi$ is 0, which is useless to determine a tangent. What is more, one can deduce from the asymptotics in \cite{Eceizabarrena2019_Part1} that a spiralling pattern is generated (Figure~\ref{FigureCurve12}). Second, the asymptotics in the rest of rationals show that derivative does not exist and that the regularity is $C^{1/2}$, but nevertheless, they suggest the existence of two different geometric tangents at each side (Figure~\ref{FigureCurve18}). It is the non-matching of both side-tangents which prevents a single tangent to exist.
 
\begin{figure}[h]
\centering
\begin{subfigure}{0.475\textwidth}
  \centering
  \includegraphics[width=1\linewidth]{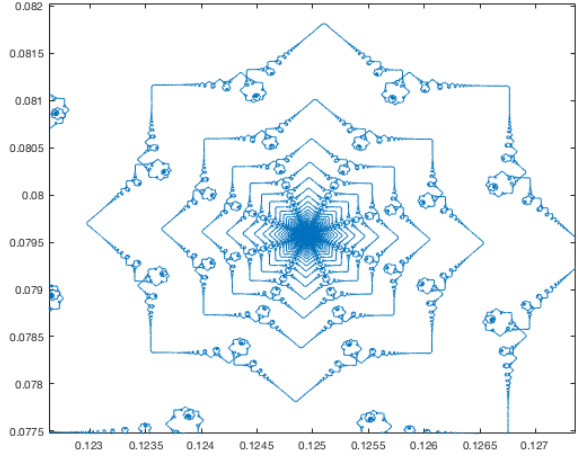}
  \caption{Around $\phi(t_{1/2})$, placed in the centre of the spiral, where precisely the spiral pattern prevents a tangent from forming.}
  \label{FigureCurve12}
\end{subfigure}%
\begin{subfigure}{0.05\textwidth}
  \centering
  \hspace{1pt}
\end{subfigure}%
\begin{subfigure}{0.475\textwidth}
  \centering
  \includegraphics[width=1\linewidth]{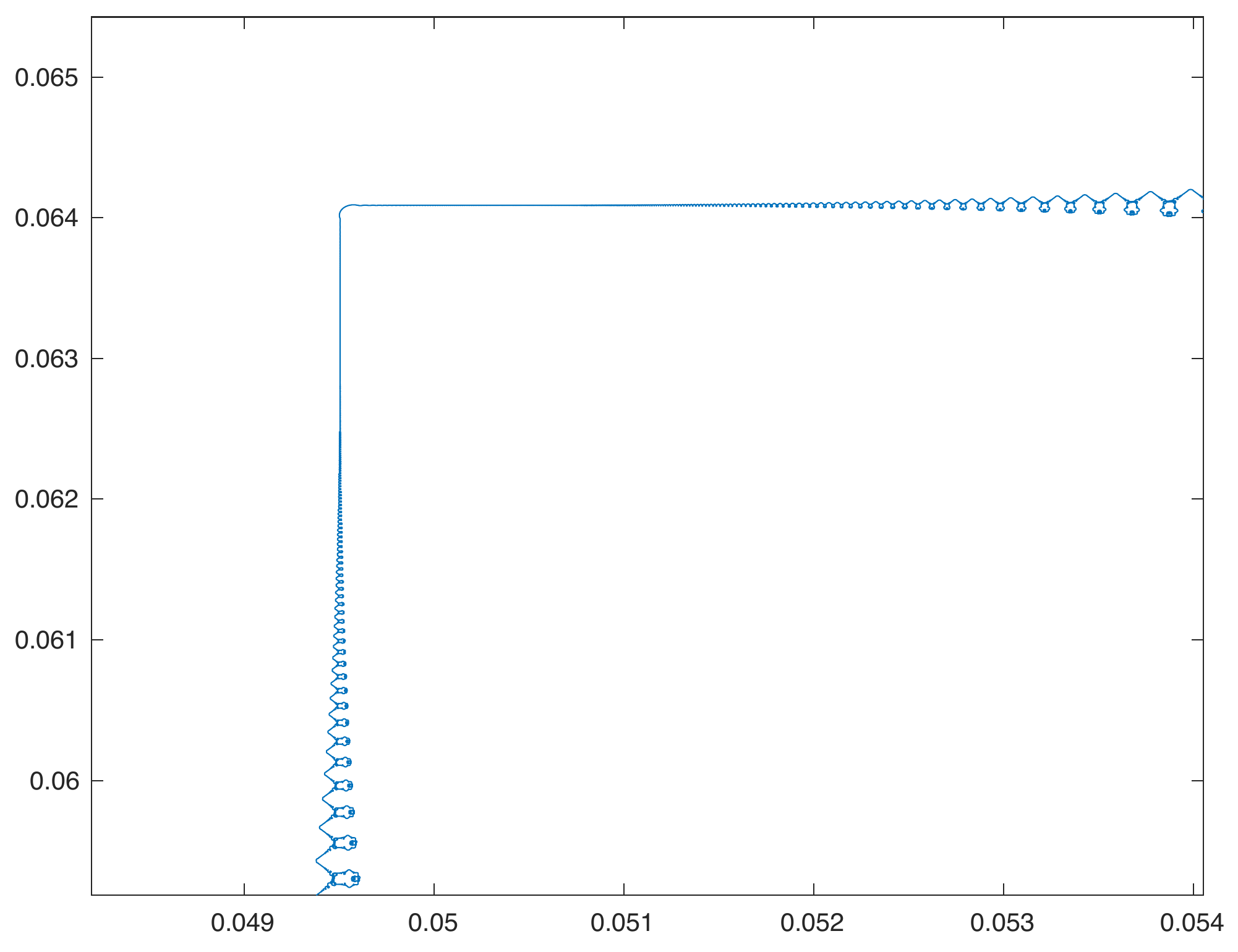}
  \caption{Around $\phi(t_{1/8})$, placed on the corner, where two different side tangents can be distinguished.
  \vspace{0.2cm}
  }
  \label{FigureCurve18}
\end{subfigure}
\caption{Zooms of Figure~\ref{FigureCurve} around the two different types of rationals. 
}
\label{FigureZooms}
\end{figure}

 Last, even if $\phi$ is not differentiable in irrational points, in view of Figure~\ref{FigureCurve18} one cannot directly conclude that there is no geometric tangent there. Also, no asymptotic behaviour around irrationals is available. In an attempt to visualise the situation, in Figure~\ref{FigureZoomsIrrational} we plot the image of  $(t_\rho - \epsilon,t_\rho + \epsilon)$ for an irrational $\rho \in \mathbb{R}\setminus\mathbb{Q}$ and $\epsilon >0$. But instead of seeing a precise behaviour of the function around $t_\rho$, we observe a pattern like in Figure~\ref{FigureZooms} which corresponds to the rational approximation of $\rho$ with smallest denominator in that interval.  However, the fact this pattern changes very much when $\epsilon$ decreases suggests that the behaviour of $\phi$ around $t_\rho$ highly depends on the scale under consideration and that therefore a tangent may not exist. 

\begin{figure}[h]
\centering
\begin{subfigure}{0.475\textwidth}
  \centering
  \includegraphics[width=1\linewidth]{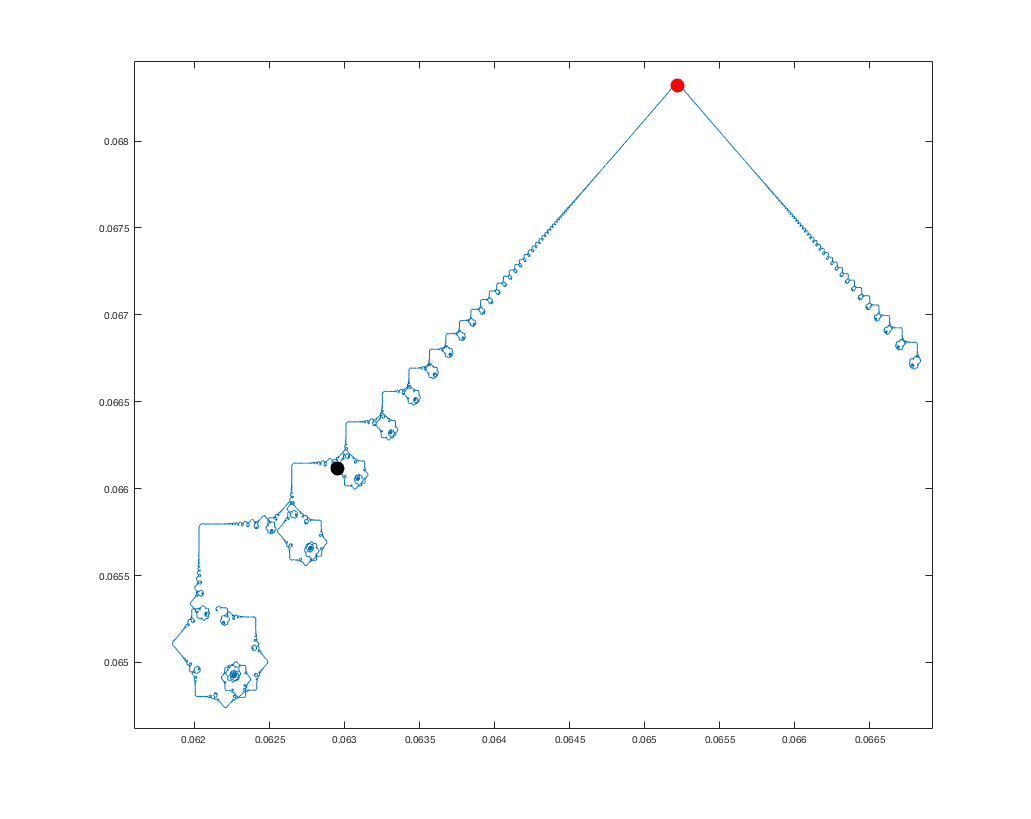}
  \caption{$\epsilon = 0.002$ and the approximation is $\phi(t_{1/7})$. The first approximation to $\pi$ by continued fractions is $22/7$, which for $\pi-3$ turns into $1/7$.
  \vspace{1.2cm}
  }
\end{subfigure}%
\begin{subfigure}{0.05\textwidth}
  \centering
  \hspace{1pt}
\end{subfigure}%
\begin{subfigure}{0.475\textwidth}
  \centering
  \includegraphics[width=1\linewidth]{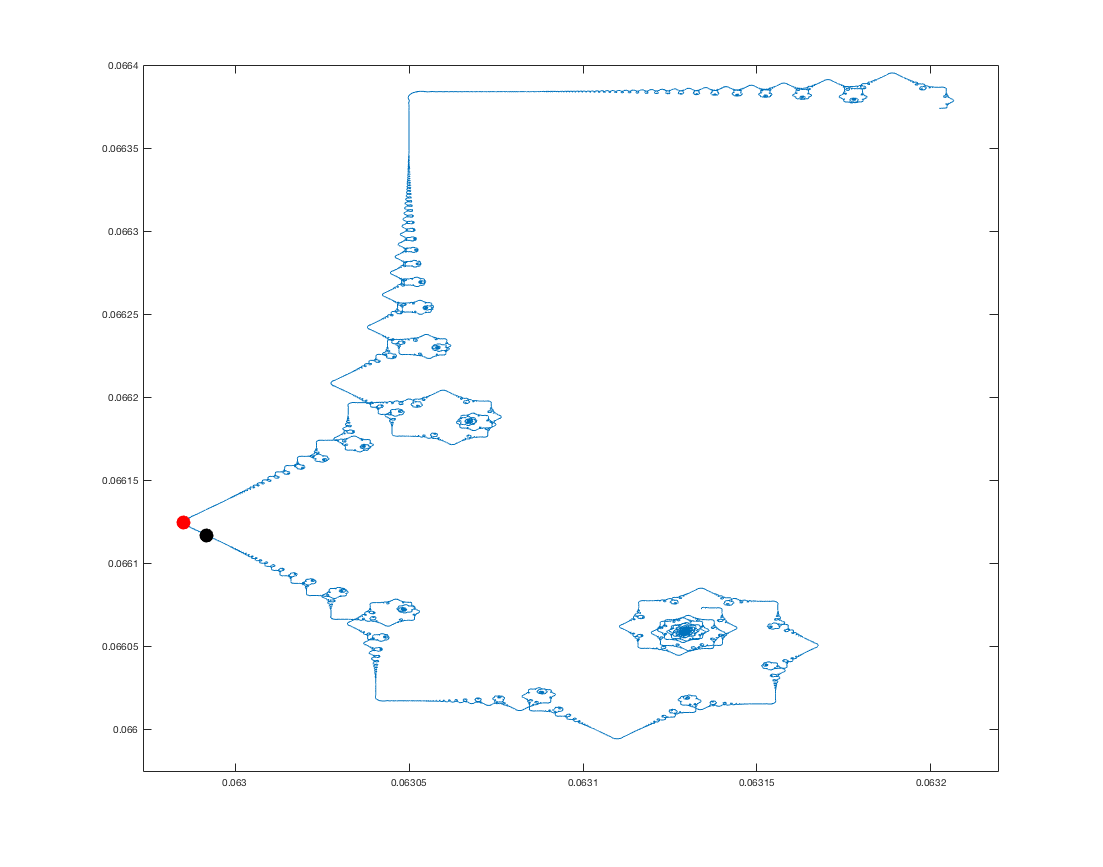}
  \caption{$\epsilon = 0.0001$ and the approximation is $\phi(t_{16/113})$. The third approximation to $\pi$ by continued fractions is the very famous $355/113$, which for $\pi-3$ turns into $16/113$. The spiral on the right corresponds to the second approximation, 333/106, or 15/106 for $\pi-3$. }
\end{subfigure}
\caption{Plots of $\phi((t_{\pi-3} - \epsilon, t_{\pi-3} + \epsilon))$. The black point corresponds to $\phi(t_{\pi-3})$, while the red points correspond to rational approximations which show either a corner or a spiral. Having changed the scale, the behaviour of $\phi$ is completely different. }
\label{FigureZoomsIrrational}
\end{figure}

As already suggested, the main ingredient in the proof of Theorem~\ref{TheoremIntro} is the asymptotic behaviour of $\phi$ around every rational $t_{p/q}$ proved in \cite{Eceizabarrena2019_Part1}, which for convenience we state in the forthcoming sections. Hence, tangency around a rational is easy to manage, but a more subtle analysis is required around an irrational. As we said, no asymptotic behaviour is available around it. A way to know how the function approaches it is to work with rational approximations, among which the approximations by continued fractions are the most effective ones. The proof will then depend on how fast they approach the irrational. This classification was remarked to be important by Jaffard \cite{Jaffard1996} in his multifractal analysis when computing \eqref{SpectrumOfSingularities} and also by Kapitanski and Rodnianski \cite{KapitanskiRodnianski1999} when they studied the regularity of the solution of \eqref{FreeSchrodingerEquation} in the variable $s$ for every fixed time $t$.

As importantly, in this non-canonical setting it is crucial to choose the concept of tangent carefully. Even if $\phi$ is a curve, the classic theory of differential geometry is of no use because Riemann's function is not differentiable in any open set. For this reason, we work with a purely geometric definition coming from geometric measure theory, as well as with one using the parametrisation. The first one is convenient in terms of the irregularity of the set, while the second one allows to perform computations using the asymptotic behaviour. 
An important part of this text is devoted to determine the relationship between them.   

The article is organised as follows. In Section~\ref{SectionTangent}, we introduce two definitions for a tangent vector and work on the relationship between them. We also present the more precise Theorem~\ref{TheoremOfTangents}, which implies Theorem~\ref{TheoremIntro}. The following sections are devoted to prove Theorem~\ref{TheoremOfTangents}: in Section~\ref{Section013} we prove the case of rational points where $\phi$ is not differentiable, and in Section~\ref{Section2}, the case of rational points where $\phi$ is differentiable. Finally, in Section~\ref{SectionIrrationals}, we prove the result in irrational points. 

\begin{acknowledgements*}
Many thanks to Albert Mas and Xavier Tolsa for helpful discussions on different concepts for a tangent, and also to Valeria Banica and Luis Vega for their help and advice.

This research is supported by the Ministry of Education, Culture and Sport (Spain) under grant  FPU15/03078 - Formaci\'on de Profesorado Universitario, by the ERCEA under the Advanced Grant 2014 669689 - HADE and also by the Basque Government through the BERC 2018-2021 program and by the Ministry of Science, Innovation and Universities: BCAM Severo Ochoa accreditation SEV-2017-0718.
\end{acknowledgements*}

\section{A proper way to define a tangent}\label{SectionTangent}

Along this section, we denote by $\mathcal{H}^s$ the $s$-Hausdorff measure, by $\operatorname{dim}_{\mathcal{H}}$ the Hausdorff dimension and by $B(x,r)$ the open ball with center at $x$ and radius $r$.

\subsection{What is a tangent?}

When treating irregular objects in $\mathbb{R}^n$, especially curve-like objects, the question whether it has a tangent is natural. However, a general set need not be the image of a function, let alone be parametrised by a continuous function. A general definition should therefore have a geometric flavour.

Any definition of a tangent at a point should reflect the fact that close to the point, the set is concentrated in a particular direction. Many different approaches to measure this concentration have been proposed. Here, we reproduce the definition given by Falconer for $s$-sets in \cite{Falconer2014}. Let $0 \leq s \leq n$. A Borel set $F\subset \mathbb{R}^n$ is said to be an $s$-set if $\dim_{\mathcal{H}}{F} = s$ and if $0 < \mathcal{H}^s(F) < \infty$. For $x \in \mathbb{R}^n$, $\mathbb{V} \in \mathbb{S}^{n-1}$ and $\varphi > 0$, we define $S_D(x,\mathbb{V}, \varphi)$ to be the closed double cone with vertex $x$, direction $\mathbb{V}$ and opening $\varphi>0$. More precisely, it is the closure of the set consisting of those $y \in \mathbb{R}^n$ such that the vector $y-x$ forms an angle at most $\varphi/2$ with $\mathbb{V}$ or $-\mathbb{V}$ (see figure~\ref{FigureConeGeneral}).
\begin{figure}[h]
\centering
\begin{subfigure}{0.475\textwidth}
  \centering
  \includegraphics[width=1\linewidth]{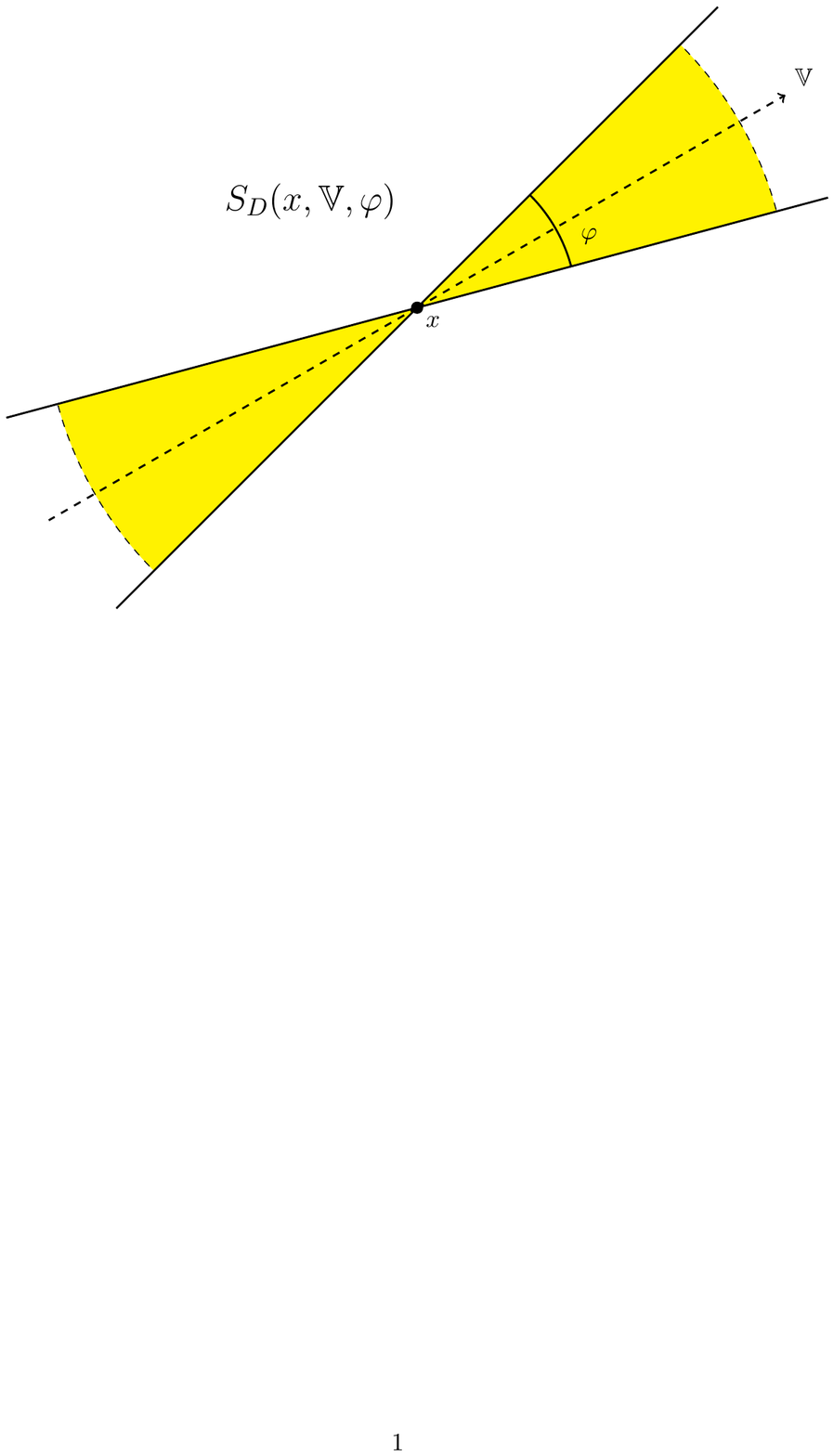}
  \caption{The double cone $S_D(x,\mathbb{V},\varphi)$.}
  \label{FigureConeGeneral}
\end{subfigure}%
\begin{subfigure}{0.05\textwidth}
  \centering
  \hspace{1pt}
\end{subfigure}%
\begin{subfigure}{0.475\textwidth}
  \centering
  \includegraphics[width=1\linewidth]{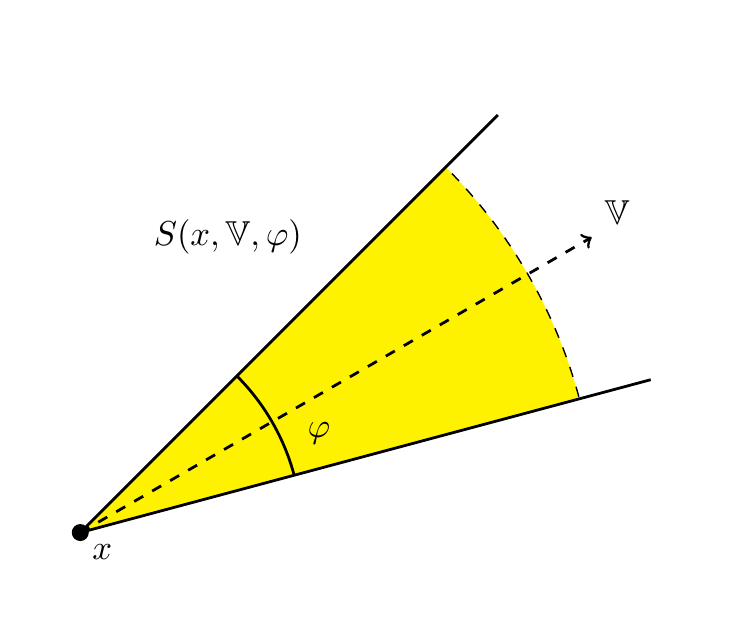}
  \caption{The single cone $S(x,\mathbb{V},\varphi)$.}
  \label{FigureConeSingle}
\end{subfigure}
\caption{Cones.
}
\label{FigureCones}
\end{figure}

\begin{defin}\label{DefFalconer}
Let $0 \leq s \leq n$ and $F \subset \mathbb{R}^n$ be an $s$-set. We say that $\mathbb{V} \in \mathbb{S}^{n-1}$ is a tangent of $F$ at $x \in F$ if
\begin{equation}\label{TangentCond1}
\overline{D}^s(F,x) = \limsup_{r \to 0}{ \frac{\mathcal{H}^s(F \cap B(x,r))}{(2r)^s} } > 0
\end{equation}  
and if for every $\varphi > 0$, 
\begin{equation}\label{TangentCond2}
\lim_{r \to 0}{\frac{\mathcal{H}^s(F \cap (B(x,r) \setminus S_D(x,\mathbb{V},\varphi)))}{(2r)^s}} = 0.
\end{equation}  
\end{defin}
Condition \eqref{TangentCond1} means that there is some concentration of the set $F$ around $x$, no matter how close we are from it. Since the cones in condition \eqref{TangentCond2} can be as narrow as we wish, we also ask that this concentration only happens in direction $\mathbb{V}$.

However, this definition requires the knowledge of the Hausdorff dimension of the set. If $F = \phi(\mathbb{R})$, we only know that $1 \leq \operatorname{dim}_{\mathcal{H}}F \leq 4/3$, so there is no obvious choice of $s$ to use in Definition~\ref{DefFalconer}. On the other hand, when working with a curve, it is natural to use some one dimensional measure such as the length, even if there are curves with infinite length and even some whose images have Hausdorff dimension greater than 1. Therefore, we propose an alternative one-dimensional approach by means of the 1-Hausdorff content, defined as
\[ \mathcal{H}^1_{\infty}(F) = \inf \left\{ \sum_{i\in I}{\operatorname{diam}U_i} : F \subset \bigcup_{i\in I}{U_i},  \,\,\,  I  \text{ countable } \right\}, \]
where the sets $U_i$ can be chosen to be open sets. 

\begin{defin}\label{DefGeometricTangentForCurves}
Let $F \subset \mathbb{R}^n$ be the image of some continuous curve. We say that $\mathbb{V} \in \mathbb{S}^{n-1}$ is a tangent of $F$ at $x \in F$ if
\begin{equation}\label{TangentCond1Content}
\limsup_{r \to 0}{ \frac{\mathcal{H}^1_{\infty}(F \cap B(x,r))}{2r} } > 0
\end{equation}  
and if
\begin{equation}\label{TangentCond2Content}
 \lim_{r \to 0}{\frac{\mathcal{H}^1_{\infty}\left( (F \cap B(x,r)) \setminus S_D(x,\mathbb{V},\varphi) \right)}{2r}} = 0, \qquad \qquad \forall \varphi >0.
\end{equation}  
\end{defin}
\begin{rem}\label{RemarkGeometricDefinition}
Since $\mathcal{H}^1(F) \geq \mathcal{H}^1_\infty(F)$ for any set $F \subset \mathbb{R}^n$,  condition \eqref{TangentCond2} for $s=1$ implies \eqref{TangentCond2Content}. Thus, if for a given $x\in F$ no vector satisfies \eqref{TangentCond2Content} so that no tangent exists in the sense of Definition~\ref{DefGeometricTangentForCurves}, then no tangent exists in the sense of Definition~\ref{DefFalconer} for $s=1$.
\end{rem}

The advantage of using the 1-Hausdorff content is that it is particularly easy to manage in the case of curves, as shown in the following lemma.
\begin{lem}\label{LemmaHausdorffContent}
Let $f:\mathbb{R} \to \mathbb{R}^n$ be a continuous curve and $a,b \in \mathbb{R}$ such that $a < b$. Then,
\[ \mathcal{H}^1_{\infty}(f([a,b])) = \operatorname{diam}( f([a,b]) ). \]
Also, let $x \in \mathbb{R}$, $r>0$ and assume there exists $\epsilon > 0$ such that either $f(x+\epsilon) \notin B(f(x),r)$ or $f(x-\epsilon) \notin B(f(x),r)$. Then,
\[  r \leq  \mathcal{H}^1_{\infty}\left(f(\mathbb{R}) \cap B(f(x),r)\right) \leq 2r. \]
\end{lem}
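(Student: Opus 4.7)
The plan is to treat the two claims separately; in both cases the upper bound is essentially immediate and the real content lies in the lower bound. For the first claim, the upper bound $\mathcal{H}^1_\infty(f([a,b])) \leq \operatorname{diam}(f([a,b]))$ follows by using an arbitrarily small open neighbourhood of the compact set $f([a,b])$ as a one-element cover. For the second, the upper bound is immediate from $f(\mathbb{R}) \cap B(f(x),r) \subset B(f(x),r)$, which has diameter $2r$.

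For the lower bound in the first claim, I would argue by projection. By compactness pick $y = f(s)$ and $z = f(t)$ attaining $\operatorname{diam}(f([a,b])) = |y-z|$, and let $\pi : \mathbb{R}^n \to \mathbb{R}$ denote orthogonal projection onto the line through $y$ and $z$. Given any open cover $\{U_i\}_{i \in I}$ of $f([a,b])$, extract a finite subcover by compactness. The set $\pi(f([a,b]))$ is connected (continuous image of $[a,b]$) and contains $\pi(y)$ and $\pi(z)$, hence contains the entire segment $[\pi(y),\pi(z)]$ of Lebesgue measure $|y-z|$. Since $\pi$ is $1$-Lipschitz, each $\pi(U_i)$ is open in $\mathbb{R}$ with Lebesgue measure $|\pi(U_i)| \leq \operatorname{diam}(U_i)$, so sub-additivity yields
\[
|y-z| \leq |\pi(f([a,b]))| \leq \sum_{i} |\pi(U_i)| \leq \sum_i \operatorname{diam}(U_i),
\]
and taking the infimum over open covers gives the desired inequality.

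For the lower bound in the second claim, assume without loss of generality that $f(x+\epsilon) \notin B(f(x),r)$, and define $x'' = \inf\{ t \in [x, x+\epsilon] : |f(t)-f(x)| \geq r \}$. Continuity of $f$ forces $|f(x'')-f(x)| = r$ and $f([x,t]) \subset B(f(x),r)$ for every $t < x''$. Applying the first claim to the subcurve $f|_{[x,t]}$, together with monotonicity of $\mathcal{H}^1_\infty$ under inclusion, gives
\[
\mathcal{H}^1_\infty(f(\mathbb{R}) \cap B(f(x),r)) \geq \mathcal{H}^1_\infty(f([x,t])) = \operatorname{diam}(f([x,t])) \geq |f(t)-f(x)|,
\]
and letting $t \to x''$ from below with $|f(t)-f(x)| \to r$ closes the argument. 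The main point requiring care is the projection step in the first claim: one needs that orthogonal projection is $1$-Lipschitz so that $|\pi(U_i)| \leq \operatorname{diam}(U_i)$, and the reduction to a finite subcover by compactness avoids any subtleties with countable sums. Everything else is a direct consequence of the intermediate value theorem and the definitions.
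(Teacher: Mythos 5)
Your proof is correct, but the first (and main) part takes a genuinely different route from the paper. The paper proves $\operatorname{diam}(f([a,b]))\leq\sum_i\operatorname{diam}U_i$ by a combinatorial chaining argument: it extracts a finite subcover, uses the parametrisation to reorder it so that each $U_{k+1}$ meets the union of its predecessors (Lemma~\ref{LemmaOrderingCovering}), and then applies inductively the fact that the diameter of a union of two overlapping sets is at most the sum of their diameters (Lemma~\ref{LemmaDiameter}). You instead use the classical projection argument: project onto the line realising the diameter, observe that the projection of the connected set $f([a,b])$ contains the full segment between the projections of the extremal points, and invoke subadditivity of Lebesgue measure together with the $1$-Lipschitz property of orthogonal projection. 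Your route is shorter, dispenses with both auxiliary lemmas, and actually proves the stronger statement that $\mathcal{H}^1_\infty(E)\geq\operatorname{diam}(E)$ for \emph{any} connected set $E$, since unlike the paper's ordering lemma it never uses the parametrisation; as a minor remark, openness of $\pi(U_i)$ is not needed there, only that a subset of $\mathbb{R}$ of diameter $d$ has outer measure at most $d$, and the passage to a finite subcover is likewise optional given countable subadditivity. Your second part is essentially the paper's argument (the paper takes $\epsilon_0=\sup\{\epsilon: f([x,x+\epsilon])\subset B(f(x),r)\}$ and evaluates at the boundary point directly, while you take an infimum and pass to the limit $t\to x''$ from below, which handles the boundary slightly more carefully); both are fine.
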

\begin{proof}
See Subsection~\ref{ProofOfLemmas}.
\end{proof}
An immediate consequence of Lemma~\ref{LemmaHausdorffContent} is that condition~\eqref{TangentCond1Content} is redundant.
\begin{lem}
Let $f: \mathbb{R} \to \mathbb{R}^n$ be a continuous, non-constant curve and $F = f(\mathbb{R})$. Then, for every $x \in \mathbb{R}$ and for small enough $r > 0$, 
\[ \frac12 \leq \frac{ \mathcal{H}^1_{\infty}( F \cap B(f(x),r) ) }{ 2r } \leq 1. \]
\end{lem}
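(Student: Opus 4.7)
The plan is to get the two inequalities separately, using the previous lemma for the lower bound and a trivial cover for the upper bound. Both are essentially one-line arguments once one recognizes what ``small enough'' must mean.

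For the upper bound, I would note that $F\cap B(f(x),r)$ is contained in the open ball $B(f(x),r)$, which has diameter $2r$. Taking the single-element open cover $\{B(f(x),r)\}$ in the definition of $\mathcal{H}^1_\infty$ gives $\mathcal{H}^1_\infty(F\cap B(f(x),r))\le 2r$, valid for every $r>0$. Dividing by $2r$ yields the upper bound $1$.

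For the lower bound I would exploit the non-constancy of $f$. Since $f$ is not constant, there exists $y\in\mathbb{R}$ with $f(y)\ne f(x)$; set $d=|f(y)-f(x)|>0$ and $\epsilon=|y-x|>0$. For any $0<r<d$, the point $f(y)$ lies outside $B(f(x),r)$, so either $f(x+\epsilon)\notin B(f(x),r)$ or $f(x-\epsilon)\notin B(f(x),r)$ depending on the sign of $y-x$. The hypotheses of the second part of Lemma~\ref{LemmaHausdorffContent} are therefore satisfied, giving $r\le \mathcal{H}^1_\infty(F\cap B(f(x),r))$. Dividing by $2r$ produces the lower bound $1/2$, with the phrase ``small enough $r$'' made precise by $r<d$.

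There is no genuine obstacle: the only content is identifying a point $y$ witnessing non-constancy, which provides both the threshold on $r$ and the value of $\epsilon$ needed to invoke Lemma~\ref{LemmaHausdorffContent}. The lemma is useful precisely because this redundancy of condition \eqref{TangentCond1Content} means that, when checking Definition~\ref{DefGeometricTangentForCurves} for a continuous curve, only \eqref{TangentCond2Content} needs to be verified.
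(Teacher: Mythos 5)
Your proof is correct and follows essentially the same route as the paper: pick $y$ with $f(y)\neq f(x)$, note that for $r<|f(y)-f(x)|$ the point $f(y)$ lies outside the ball, and invoke the second part of Lemma~\ref{LemmaHausdorffContent} (which already contains both the lower bound $r$ and the upper bound $2r$, so your separate single-ball cover for the upper bound is just unpacking that lemma). No gaps.
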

\begin{proof}
Let $x \in \mathbb{R}$. Since $f$ is not a constant, the exists $y \in \mathbb{R}$ such that $f(x) \neq f(y)$. Call $r_x = |f(y) - f(x) | > 0$, so $f(y) \notin B(f(x),r)$ for every $r < r_x$. By the second part of Lemma~\ref{LemmaHausdorffContent} we get
\[ r \leq \mathcal{H}^1_{\infty}\left( f(\mathbb{R}) \cap B(f(x),r) \right) \leq 2r, \qquad \forall r < r_x. \]
\end{proof}

\begin{rem}
One could also state Definition~\ref{DefGeometricTangentForCurves} with the $s$-Hausdorff content
\[ \mathcal{H}^s_{\infty}(F) = \inf \left\{ \sum_{i\in I}{\left(\operatorname{diam}U_i\right)^s} : F \subset \bigcup_{i\in I}{U_i},  \,\,\,  I  \text{ countable } \right\} \]
instead of $\mathcal{H}^1_\infty$, and ask whether $\phi(\mathbb{R})$ has a tangent for the cases of interest $1\leq s \leq 4/3$. Then, the arguments in Remark~\ref{RemarkGeometricDefinition} would also apply here so that we could obtain results in the sense of Definition~\ref{DefFalconer} for every $1 \leq s \leq 4/3$. However, complications arise due to the lack of an analogue to the very useful  Lemma~\ref{LemmaHausdorffContent}. In any case, as we will see, the proof for the non-existence of tangents in the $\mathcal{H}^1_\infty$ setting is based on a parametric approach (see Subsection~\ref{Subsection_ParametricApproach})  independent of the measure chosen and the dimension of the set, which suggests that no tangent should exist in the $\mathcal{H}^s_\infty$ setting. To clarify this and to prove it rigorously is a future work deeply related to the computation of the Hausdorff dimension of $\phi(\mathbb{R})$.
\end{rem}

\subsection{A parametric approach}\label{Subsection_ParametricApproach}

Definition~\ref{DefGeometricTangentForCurves} suits the characteristics of $F=\phi(\mathbb{R})$, but it does not suggest any direct method to work with the parametrisation $\phi$. In the rest of this section, we look for an alternative definition using some parametrisation of the curve and relate it to the geometric approach. 

When looking for a tangent at $\phi(x)$, since $\phi$ is not injective, we propose to work only with points which are close to $\phi(x)$ in parameter; in other words, to consider only $\phi((x-\delta,x+\delta)) $ for some convenient $\delta >0$.
As seen in the introduction, Riemann's function may approach a given point from different directions on the right and on the left (see Figure~\ref{FigureCurve18}). To describe this behaviour, we propose to define tangents on the right, looking only at $\phi((x,x+\delta))$, and on the left, looking only at $\phi((x-\delta,x))$, for some $\delta >0$. Thus, instead of using double cones $S_D$, we use single cones $S(x,\mathbb{V},\varphi)$ consisting on the closure of the set of points $y \in \mathbb{R}^n$ such that $y-x$ forms with $\mathbb{V}$ an angle of at most $\varphi/2$ (see figure~\ref{FigureConeSingle}).
Following the idea of condition~\eqref{TangentCond2Content}, we propose the following definition.
\begin{defin}\label{DefOfTangents}
Let $f : \mathbb{R} \to \mathbb{R}^n$ be a continuous curve and $x \in \mathbb{R}$. We say that $f$ has a \textbf{tangent on the right} at $x$ in direction $\mathbb{V} \in \mathbb{S}^{n-1}$ if
\[ \forall \varphi > 0, \qquad \exists \delta_{\varphi} > 0 \qquad \text{ such that } \qquad f((x,x+\delta_{\varphi})) \subset S(f(x),\mathbb{V},\varphi). \]
We say that $f$ has a \textbf{tangent on the left} at $x$ in direction $\mathbb{V}$ if
\[ \forall \varphi > 0, \qquad \exists \delta_{\varphi} > 0 \qquad \text{ such that } \qquad f((x-\delta_{\varphi},x)) \subset S(f(x),\mathbb{V},\varphi). \]
We say that $f$ has a \textbf{tangent} at $x$ if it has both tangents on the right and on the left and if their directions are diametrically opposite.
\end{defin} 
\begin{rem}\label{RemDependsOnParametrisation}
Using Definition~\ref{DefOfTangents}, we might want to derive tangency properties for a set $F \subset \mathbb{R}^n$ which is parametrised by a continuous function $f:\mathbb{R} \to \mathbb{R}^n$ satisfying $f(\mathbb{R}) = F$. The obvious choice is saying that $F$ has a tangent in $f(x)$ if $f$ has a tangent in $x$. However, this definition depends on the chosen parametrisation $f$ and may yield many undesirable results. For example, if we parametrise the real axis in $\mathbb{R}^2 \simeq \mathbb{C}$, which should have tangent $1 \in \mathbb{S}^1$ everywhere, using the function
\begin{equation}
f(x) = \left\{ \begin{array}{ll}
x, & x<0, \\
0, & 0 < x < 1, \\
x-1, & x \geq 1,
\end{array}
\right.
\end{equation}
then every $\mathbb{V} \in \mathbb{S}^1$ is a tangent of $f$ in $1/2$, which would mean that every direction is a tangent to the real axis at the origin. 
\end{rem}

Remark~\ref{RemDependsOnParametrisation} shows that Definition~\ref{DefOfTangents} is too weak to determine a geometric tangent. Nevertheless, we focus on the reverse direction of the reasoning, since according to the following proposition, Definition~\ref{DefGeometricTangentForCurves} implies Definition~\ref{DefOfTangents}.
\begin{prop}\label{PropConnecting}
Let $f:\mathbb{R} \to \mathbb{R}^n$ be a continuous function, $F = f(\mathbb{R})$, $x \in \mathbb{R}$ and $\mathbb{V} \in \mathbb{S}^{n-1}$. Then, condition \eqref{TangentCond2Content} rewritten as 
\[ \lim_{r \to 0}{ \frac{ \mathcal{H}^1_{\infty} \left( ( F \cap B(f(x),r) ) \setminus S_D(f(x),\mathbb{V},\varphi) \right)  }{r}  } = 0, \qquad \forall \varphi > 0, \]
 implies that
\[ \forall \varphi > 0, \quad  \exists \delta_{\varphi} > 0 \quad : \quad f((x - \delta_{\varphi},x+\delta_{\varphi})) \subset S_D(f(x),\mathbb{V},\varphi). \]
\end{prop}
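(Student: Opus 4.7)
The plan is to argue by contrapositive: if the conclusion fails, then for some fixed $\varphi_0 > 0$ I can extract a sequence $y_n \to x$ with $y_n \neq x$ and $f(y_n) \notin S_D(f(x), \mathbb{V}, \varphi_0)$, and then contradict the hypothesis at the halved aperture $\varphi_1 := \varphi_0/2$. Let $d_n := |f(y_n) - f(x)|$; by continuity $d_n \to 0$, and $d_n > 0$ because the apex $f(x)$ always belongs to $S_D(f(x), \mathbb{V}, \varphi_0)$.

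The first ingredient is a simple angle computation. Every point of the smaller cone $S_D(f(x), \mathbb{V}, \varphi_1)$ makes angle at most $\varphi_0/4$ with $\pm\mathbb{V}$, while $f(y_n) - f(x)$ makes angle strictly greater than $\varphi_0/2$ with $\pm\mathbb{V}$. Dropping a perpendicular from $f(y_n)$ onto the lateral boundary of the smaller cone (or, if the foot falls behind the apex, using that the distance to the apex is $d_n$) gives the geometric lower bound
\[ \operatorname{dist}(f(y_n), S_D(f(x), \mathbb{V}, \varphi_1)) \geq d_n \sin(\varphi_0/4). \]
With $c := \tfrac12 \sin(\varphi_0/4)$, the closed ball $\overline{B(f(y_n), c d_n)}$ is disjoint from $S_D(f(x), \mathbb{V}, \varphi_1)$ and is contained in $B(f(x), r_n)$ for $r_n := (1 + 2c) d_n$, a radius comparable to $d_n$.

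Next, I would upgrade the single point $f(y_n)$ into a subarc of comparable diameter by considering the connected component $I_n = (a_n, b_n)$ of $f^{-1}(B(f(y_n), c d_n))$ that contains $y_n$. Since $|f(x) - f(y_n)| = d_n > c d_n$, the point $x$ lies outside $I_n$, so $I_n$ is a bounded open interval; continuity of $f$ at the endpoints then forces $|f(a_n) - f(y_n)| = |f(b_n) - f(y_n)| = c d_n$. Consequently $f([a_n, b_n])$ contains two points at distance $c d_n$ and sits entirely inside $\overline{B(f(y_n), c d_n)} \subset B(f(x), r_n) \setminus S_D(f(x), \mathbb{V}, \varphi_1)$.

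Applying Lemma~\ref{LemmaHausdorffContent} on $[a_n, b_n]$ together with the monotonicity of $\mathcal{H}^1_\infty$ would then yield
\[ \frac{\mathcal{H}^1_\infty\bigl((F \cap B(f(x), r_n)) \setminus S_D(f(x), \mathbb{V}, \varphi_1)\bigr)}{r_n} \geq \frac{c d_n}{r_n} = \frac{c}{1 + 2c} > 0, \]
along a sequence $r_n \to 0$, contradicting the hypothesis at $\varphi = \varphi_1$. The main delicate step is the argument for boundedness of $I_n$ and the identification of $|f(a_n) - f(y_n)| = |f(b_n) - f(y_n)| = c d_n$, since that is what turns a zero-content single-point obstruction into an honest arc to which Lemma~\ref{LemmaHausdorffContent} applies; the angle estimate above and the monotonicity of $\mathcal{H}^1_\infty$ are routine once this is in place.
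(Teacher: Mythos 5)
Your argument is correct and follows essentially the same route as the paper's: both proceed by contradiction, isolate a subarc of the curve near the offending point $f(y_n)$ that avoids the cone of halved aperture, bound its diameter from below by a constant times $d_n\sin(\varphi_0/4)$, and convert that diameter into $\mathcal{H}^1_\infty$-content via Lemma~\ref{LemmaHausdorffContent} and monotonicity. The only inaccuracy is the claim that $x\notin I_n$ forces $I_n=(a_n,b_n)$ to be bounded --- it only forces the endpoint of $I_n$ on the side of $x$ to be finite --- but that single finite endpoint $a_n$, together with $f(y_n)$ itself, already gives $\operatorname{diam} f([a_n,y_n])\geq |f(a_n)-f(y_n)|=c\,d_n$, so the proof stands as written once that sentence is adjusted.
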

\begin{proof}
By contradiction, assume there exists $\varphi_0 > 0$ such that for every $\delta >0$ we have 
\[ f((x - \delta,x+\delta)) \not\subset S_D(f(x),\mathbb{V},\varphi_0). \]
Then, there exists a sequence of real numbers $(\delta_n)_{n\in\mathbb{N}}$, which we can assume to be positive (after extracting the subsequence of all positive or all negative terms, and if they are negative the proof is analogous), with $\lim_{n \to \infty}\delta_n = 0$ such that $ f(x + \delta_n) \notin S_D(f(x),\mathbb{V},\varphi_0)$. This property also holds for any cone with an angle $\varphi < \varphi_0$. Define 
\[ \sigma_n = \sup\{ \sigma >0 \quad : \quad f((x+\delta_n - \sigma, x+\delta_n)) \cap S_D(f(x),\mathbb{V},\varphi/2) = \emptyset \}. \] 
By continuity of $f$, $0 < \sigma_n \leq \delta_n$, and also $f(x + \delta_n - \sigma_n) \in \partial S_D(f(x), \mathbb{V}, \varphi/2)$. Call
\[ F_n = f((x+\delta_n-\sigma_n,x+\delta_n)), \]
so that $F_n \cap S_D(f(x),\mathbb{V},\varphi/2) = \emptyset$. Define
\[ r_n = \sup\{ |y - f(x)| \quad : \quad y \in F_n \}, \]
so $\lim_{n\to\infty}r_n = 0$ because $0 < \sigma_n \leq \delta_n \to 0$. Now, since $F_n \subset F$ and $F_n \subset B(f(x),r_n)$, by Lemma~\ref{LemmaHausdorffContent} we may write
\begin{equation*}
\begin{split}
& \mathcal{H}^1_{\infty} \left( ( F \cap B(f(x),r_n) ) \setminus S_D(f(x),\mathbb{V},\varphi/2) \right) \\
& \qquad \geq \mathcal{H}^1_{\infty} \left( \left(  F_n \cap B(f(x),r_n) \right) \setminus S_D(f(x),\mathbb{V},\varphi/2) \right) \\
& \qquad = \mathcal{H}^1_{\infty} \left(  F_n  \right) = \operatorname{diam}F_n.
\end{split}
\end{equation*}
We analyse two cases now.
\begin{itemize}
	\item If $\operatorname{dist}(f(x),f(x+\delta_n)) \leq r_n/2$, then $\operatorname{dist}(f(x+\delta_n),\partial B(f(x),r_n)) \geq r_n/2$. Hence, by the definition of $r_n$, we get $\operatorname{diam}F_n \geq r_n/2$.
	\item Else, if $\operatorname{dist}(f(x),f(x+\delta_n)) > r_n/2$, we have $f(x+\delta_n) \in A(f(x),r_n/2,r_n) \setminus S_D(f(x),\mathbb{V},\varphi)$. Also, since $f(x + \delta_n - \sigma_n) \in \partial S_D(f(x),\mathbb{V},\varphi/2)$, we have 
	\[ \operatorname{diam} F_n  \geq \operatorname{dist} \left( f(x+\delta_n), f(x+\delta_n-\sigma_n) \right) \geq \operatorname{dist} \left( f(x+\delta_n), S_D(f(x),\mathbb{V},\varphi/2) \right)  \]
 and also
	\begin{equation*}
	\begin{split}
	 \operatorname{dist}\left( f(x+\delta_n),S_D(f(x),\mathbb{V},\varphi/2) \right) & \geq \operatorname{dist}\left(  A(f(x),r_n/2,r_n) \setminus S_D(f(x),\mathbb{V},\varphi) ,S_D(f(x),\mathbb{V},\varphi/2) \right) \\
	 & = \frac{r_n}{2}\sin\left( \varphi /4 \right).
	\end{split}
	\end{equation*}	
\end{itemize}
In short, we always get
\[ \operatorname{diam}F_n \geq \frac{r_n}{2} \min{\left(1,\sin{(\varphi/4)} \right)},\]
so
\[  \frac{ \mathcal{H}^1_{\infty} \left( ( F \cap B(f(x),r_n) ) \setminus S_D(f(x),\mathbb{V},\varphi/2) \right) }{r_n}  \geq \frac{\min{\left(1,\sin{(\varphi/4)} \right)}}{2} > 0, \qquad \forall n \in \mathbb{N}, \]
holds for every $0 < \varphi < \varphi_0$. Since $\lim_{n \to \infty}r_n = 0$, the limit in the statement cannot hold.
\end{proof}
\begin{cor}\label{CorMainTool}
Let $f: \mathbb{R} \to \mathbb{R}^n$ be a continuous and non-constant function, $F = f(\mathbb{R})$, $x \in \mathbb{R}$ and $\mathbb{V} \in \mathbb{S}^{n-1}$.  If $\mathbb{V}$ is a tangent of  $F$ at $f(x)$ in the sense of Definition~\ref{DefGeometricTangentForCurves}, then it is a tangent of $f$ at $x$ in the sense of Definition~\ref{DefOfTangents}.

In other words, if $\mathbb{V}$ is not a tangent of $f$ at $x$, then it is not a tangent of $F$ in $f(x)$. 
\end{cor}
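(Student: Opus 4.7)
The plan is to deduce Corollary~\ref{CorMainTool} as an almost direct consequence of Proposition~\ref{PropConnecting}, together with a short connectedness argument that upgrades the double-cone containment on the full interval $(x-\delta_\varphi,x+\delta_\varphi)$ (which is what the proposition provides) into the single-cone containments on each side required by Definition~\ref{DefOfTangents}. The contrapositive form in the second paragraph of the statement is then immediate.

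First I would observe that if $\mathbb{V}$ is a tangent of $F$ at $f(x)$ in the sense of Definition~\ref{DefGeometricTangentForCurves}, then in particular condition~\eqref{TangentCond2Content} holds; this is, up to the harmless factor of $2$ in the denominator, exactly the hypothesis of Proposition~\ref{PropConnecting}. Applying that proposition yields
\[ \forall\,\varphi>0,\quad \exists\,\delta_\varphi>0\ :\ f\bigl((x-\delta_\varphi,\,x+\delta_\varphi)\bigr)\subset S_D(f(x),\mathbb{V},\varphi), \]
so that the image of a two-sided neighbourhood of $x$ lies in the double cone.

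Next I would pass from this double-cone control to single-cone controls on each side. Fix $\varphi<\pi$; then $S_D(f(x),\mathbb{V},\varphi)=S(f(x),\mathbb{V},\varphi)\cup S(f(x),-\mathbb{V},\varphi)$ and the two single cones meet only at the vertex $\{f(x)\}$. Restricted to any subinterval of $(x,x+\delta_\varphi)$ on which $f$ never takes the value $f(x)$, the continuous image is a connected subset of the disjoint union $(S(f(x),\mathbb{V},\varphi)\setminus\{f(x)\})\sqcup(S(f(x),-\mathbb{V},\varphi)\setminus\{f(x)\})$ of two relatively closed pieces, so by connectedness it lies in exactly one of them. Using non-constancy of $f$ plus a consistency argument as $\varphi\to 0$, I would extract a single direction $\mathbb{V}_+\in\{\mathbb{V},-\mathbb{V}\}$ that works for all small $\varphi$, giving a right tangent of $f$ at $x$; the left side is symmetric and yields $\mathbb{V}_-$. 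A final connectedness argument on the full interval $(x-\delta_\varphi,x+\delta_\varphi)$, using that transitions between the two single cones can only occur through the vertex $f(x)$, forces $\mathbb{V}_+=-\mathbb{V}_-$, which is exactly the opposite-direction requirement of Definition~\ref{DefOfTangents}.

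The main obstacle is the upgrade from double cone to single cones, especially when the preimage $f^{-1}(\{f(x)\})$ accumulates at $x$: the connectedness argument must then be applied piecewise on each component of $(x,x+\delta_\varphi)\setminus f^{-1}(\{f(x)\})$, and a diagonal argument in $\varphi\to 0$ is needed to choose $\mathbb{V}_+$ consistently across scales. Once this is handled, the rest is essentially a bookkeeping translation between the two definitions, and the contrapositive version used in subsequent sections of the paper follows immediately.
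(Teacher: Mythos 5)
Your first paragraph is exactly the paper's (implicit) argument: the corollary is stated with no separate proof, as a direct consequence of Proposition~\ref{PropConnecting}, and the reduction of condition \eqref{TangentCond2Content} to the hypothesis of that proposition is indeed only the harmless factor of $2$. The problem is the second half of your plan. The upgrade from the double-cone containment $f((x-\delta_\varphi,x+\delta_\varphi))\subset S_D(f(x),\mathbb{V},\varphi)$ to the two single-cone containments with diametrically opposite directions demanded by Definition~\ref{DefOfTangents} is not merely delicate when $f^{-1}(\{f(x)\})$ accumulates at $x$ --- it is false. Take $n=2$, $f(t)=(|t|,0)$, $F=f(\mathbb{R})=[0,\infty)\times\{0\}$, $x=0$, $\mathbb{V}=(1,0)$. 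Then $\mathbb{V}$ satisfies \eqref{TangentCond1Content} and \eqref{TangentCond2Content} at the origin (the set lies entirely inside every double cone around $\mathbb{V}$), so it is a geometric tangent; but $f$ has right tangent $(1,0)$ \emph{and} left tangent $(1,0)$, which are not diametrically opposite, so $f$ has no tangent at $0$ in the sense of Definition~\ref{DefOfTangents}. Your closing step (``transitions between the two single cones can only occur through the vertex, forcing $\mathbb{V}_+=-\mathbb{V}_-$'') fails because no transition need occur at all. Likewise, a parametrisation of the real axis that contains the arc $t\mapsto(t\sin(1/t),0)$ near $t=0$ shows that even the one-sided single-cone containment can fail while the geometric tangent exists: the image of every interval $(0,\delta)$ meets both single cones away from the vertex, so the ``diagonal argument in $\varphi\to0$'' you invoke to choose $\mathbb{V}_+$ consistently across the components of $(x,x+\delta_\varphi)\setminus f^{-1}(\{f(x)\})$ has nothing to converge to.

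So the obstacle you correctly flag as the main one cannot be overcome: as literally phrased, the corollary asserts more than Proposition~\ref{PropConnecting} gives, and your proposed repair would prove a false statement. What is true, and what the paper actually uses in Sections~3--5, is precisely the double-cone conclusion of Proposition~\ref{PropConnecting}, equivalently (via the reasoning of Lemma~\ref{DefOfTangentsLimits}) that every subsequential limit of $(f(x\pm r_n)-f(x))/|f(x\pm r_n)-f(x)|$ lies in $\{\mathbb{V},-\mathbb{V}\}$. All three cases of Theorem~\ref{TheoremOfTangents} exhibit limit directions that cannot fit inside a single antipodal pair, so this weaker contrapositive suffices for everything downstream. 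Your write-up becomes correct and faithful to the paper if you stop after the first paragraph and record the conclusion in that double-cone form instead of attempting the single-cone upgrade.
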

\begin{rem}
Condition  \eqref{TangentCond2Content} is independent of $f$, so the conclusion in Proposition~\ref{PropConnecting} holds for \textbf{every} parametrisation $f$ of $F$. Thus, according to Corollary~\ref{CorMainTool}, given a curve-like set $F \subset \mathbb{R}^n$ and $y \in F$, in order to conclude that $\mathbb{V} \in \mathbb{S}^{n-1}$ is not a tangent of $F$ at $y$, it is enough to find an appropriate parametrisation $f$ and $x\in\mathbb{R}$ with $f(x)=y$ not allowing $\mathbb{V}$ as a tangent in $x$. For instance, in the example of Remark~\ref{RemDependsOnParametrisation}, $f$ allowed $i \in \mathbb{S}^1 \subset \mathbb{C}$ as a tangent in $1/2$, but $i$ is not a tangent of the parametrisation $g(x)=x$ at 0. Corollary~\ref{CorMainTool} implies that, as expected, $i$ is not a tangent of the real line at the origin. 
\end{rem}
 
Definition~\ref{DefOfTangents} makes the vector $f(x+r) - f(x)$ converge to the direction $\mathbb{V}$ when $r \to 0$. In the following lemma, we give a more direct way to work with the parametrisation in case $f(x+r) - f(x) \neq 0$.
\begin{lem}\label{DefOfTangentsLimits}
Let $f: \mathbb{R} \to \mathbb{R}^n$ be a continuous curve and $x \in \mathbb{R}$. Then, $f$ has a tangent on the right at $x$ in direction $\mathbb{V} \in \mathbb{S}^{n-1}$ if and only if for every sequence $(r_n)_{n \in \mathbb{N}}$ satisfying $r_n >0$, $f(x+r_n) - f(x) \neq 0$ for all $n \in \mathbb{N}$ and $\lim_{n \to \infty}{r_n} = 0$ we have
\[ \quad \lim_{n \to \infty}{\frac{f(x+r_n) - f(x)}{|f(x+r_n) - f(x)|}} = \mathbb{V}. \]
Moreover, if one of such sequences exists, then the tangent is unique.

The same result applies for tangents on the left by changing $f(x+r_n)$ for $f(x-r_n)$.
\end{lem}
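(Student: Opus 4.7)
The plan is to handle the two implications separately and then derive uniqueness as a byproduct. Throughout I would exploit the fact that a unit vector $u \in \mathbb{S}^{n-1}$ lies in $S(f(x),\mathbb{V},\varphi)$ (after translating so that $f(x)$ is the vertex) precisely when the angle between $u$ and $\mathbb{V}$ is at most $\varphi/2$, which is quantitatively equivalent to $|u-\mathbb{V}| \leq 2\sin(\varphi/4)$.

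For the forward direction, I would fix a sequence $(r_n)$ with $r_n > 0$, $r_n \to 0$ and $f(x+r_n) \neq f(x)$, and fix $\varphi>0$. By Definition~\ref{DefOfTangents}, there exists $\delta_\varphi>0$ with $f((x,x+\delta_\varphi)) \subset S(f(x),\mathbb{V},\varphi)$. Once $n$ is large enough that $r_n < \delta_\varphi$, the point $f(x+r_n)$ sits in the cone, and since $f(x+r_n) \neq f(x)$, the unit vector $\frac{f(x+r_n)-f(x)}{|f(x+r_n)-f(x)|}$ makes angle at most $\varphi/2$ with $\mathbb{V}$. Since $\varphi$ was arbitrary, the limit equals $\mathbb{V}$.

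For the reverse direction, I would argue by contradiction. Suppose $f$ does not have $\mathbb{V}$ as a right tangent at $x$: then some $\varphi_0 > 0$ witnesses that for every $\delta>0$ there is $y \in (x,x+\delta)$ with $f(y) \notin S(f(x),\mathbb{V},\varphi_0)$. Choosing $\delta = 1/n$ produces a sequence $r_n > 0$ with $r_n \to 0$ and $f(x+r_n) \notin S(f(x),\mathbb{V},\varphi_0)$. Here a small point needs care: the vertex $f(x)$ belongs to the closed cone, so the condition $f(x+r_n) \notin S(f(x),\mathbb{V},\varphi_0)$ automatically forces $f(x+r_n) - f(x) \neq 0$. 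Thus $(r_n)$ is an admissible sequence, yet every normalised difference quotient lies outside the closed ball of radius $2\sin(\varphi_0/4)$ around $\mathbb{V}$, contradicting the assumed convergence.

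Finally, uniqueness is immediate: if $\mathbb{V}_1$ and $\mathbb{V}_2$ both satisfy the tangent condition of Definition~\ref{DefOfTangents} and there exists at least one admissible sequence $(r_n)$ with $f(x+r_n)\neq f(x)$, the forward direction forces the same normalised quotients to converge both to $\mathbb{V}_1$ and to $\mathbb{V}_2$. The left-tangent version is obtained by the obvious substitution $r_n \mapsto -r_n$. I expect no substantial obstacle beyond bookkeeping; the only subtlety is the one just highlighted, namely that exclusion from the single cone already ensures the normalisation in the sequential statement is well defined, which is what allows the contradiction to close.
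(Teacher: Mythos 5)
Your proof is correct and follows essentially the same route the paper sketches: translate the cone condition into an angle condition on the normalised increments, use the sequential formulation precisely to sidestep the degenerate case $f(x+r)=f(x)$ (your observation that exclusion from the closed cone automatically forces $f(x+r_n)\neq f(x)$ is exactly the point the paper alludes to), and obtain uniqueness by comparing limits along a single admissible sequence. You have merely supplied the quantitative details (e.g. $|u-\mathbb{V}|=2\sin(\theta/2)$) that the paper leaves implicit.
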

\begin{proof}
The proof consists in writing the cone condition in Definition~\ref{DefOfTangents} in terms of the vector $f(x+r)-f(x)$ for $r>0$. The formulation in terms of sequences comes from the need to take care of the cases $f(x+r) = f(x)$, where the direction of $f(x+r)-f(x) = 0$ is not well-defined.  

On the other hand, if $f$ has tangents $\mathbb{V}_1$ and $\mathbb{V}_2$ in $x$ and if there exists a sequence $(r_n)_n$ as above, then 
\[ \mathbb{V}_1 = \lim_{n \to \infty}{\frac{f(x+r_n) - f(x)}{|f(x+r_n) - f(x)|}}  = \mathbb{V}_2, \]
so the tangent, if it exists, is unique. 
\end{proof}

\begin{rem}
Lemma~\ref{DefOfTangentsLimits} shows that the situation of Remark~\ref{RemDependsOnParametrisation} is the only one in which we may have problems to compute tangents. Indeed, ambiguity will only arise in case the parametrisation is constant in a neighbourhood of $x$.
\end{rem}

\subsection{Main result}

Once the definition of a tangent has been settled, let us specify Theorem~\ref{TheoremIntro}.
\setcounter{section}{1}
\setcounter{thm}{0}
\begin{thm}\label{TheoremIntroSpecified}
Let $\phi$ be Riemann's non-differentiable function \eqref{RiemannFunction},  $t \in \mathbb{R}$ and $\mathbb{V} \in \mathbb{S}^1$. Then, $\mathbb{V}$ is not a tangent of $\phi(\mathbb{R})$ in $\phi(t)$ in the sense of Definition~\ref{DefGeometricTangentForCurves}.
\end{thm}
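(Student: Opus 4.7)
The plan is to reduce the geometric statement to the parametric one and then, at every $t \in \mathbb{R}$ and for every candidate direction $\mathbb{V} \in \mathbb{S}^1$, produce a sequence of chords whose normalised versions do not converge to $\mathbb{V}$. By Corollary~\ref{CorMainTool} it suffices to show that no $\mathbb{V} \in \mathbb{S}^1$ is a tangent of $\phi$ at $t$ in the sense of Definition~\ref{DefOfTangents}; by Lemma~\ref{DefOfTangentsLimits} this in turn follows if, on at least one side of $t$, I can exhibit two sequences $r_n \to 0$ (both positive, say) along which the unit vectors $(\phi(t+r_n)-\phi(t))/|\phi(t+r_n)-\phi(t)|$ tend to two distinct limits, or a single sequence whose normalised chords fail to converge at all. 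Since a tangent in the sense of Definition~\ref{DefOfTangents} requires matching left and right tangents, killing one side is enough.

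I would split the argument into three cases, each calling on a different tool. \emph{Case A:} $t = t_{p/q}$ with $\phi$ not differentiable there. The asymptotic expansion around $t_{p/q}$ from \cite{Eceizabarrena2019_Part1} writes $\phi(t+r) - \phi(t)$ in a self-similar way as $\sqrt{r}$ times an explicit Gauss-sum phase plus a lower-order error; selecting $r_n$ along two different geometric subsequences produces two different limiting arguments, so no corner direction $\mathbb{V}$ can serve as tangent. \emph{Case B:} $t = t_{p/q}$ with $p,q$ coprime and both odd, i.e. Gerver's case. After the linear correction from \eqref{RelationshipDuistermaatUs} the derivative is zero, but the asymptotic expansion exhibits a genuine spiral: the arguments of $\phi(t+r_n)-\phi(t)$ wind around as one passes through a geometric sequence of $r_n$'s adapted to the self-similar scale, yielding infinitely many accumulation directions, precisely the pattern visible in Figure~\ref{FigureCurve12}.

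\emph{Case C:} $t$ irrational. Here there is no direct asymptotic, so the plan is to transfer information through the convergents $p_n/q_n$ of the continued fraction expansion of $2\pi t$. For each $n$ I take a ``coarse'' perturbation $r_n := t_{p_n/q_n} - t$, with $|r_n|\le 1/(2\pi q_n q_{n+1})$, and write
\[
\phi(t+r_n) - \phi(t) = \phi(t_{p_n/q_n}) - \phi(t),
\]
then apply the rational asymptotic of \cite{Eceizabarrena2019_Part1} \emph{centred at $t_{p_n/q_n}$} with argument $-r_n$ to extract the leading behaviour. A second, ``fine'' perturbation $\rho_n \ll |r_n|$ is chosen on the self-similar scale $q_n^{-2}$, small enough that the chord from $\phi(t)$ to $\phi(t+\rho_n)$ is dominated by the local corner-or-spiral structure at $t_{p_n/q_n}$ rather than by the gap $r_n$. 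Comparing the two resulting asymptotic directions, and using that the Gauss-sum phases at successive convergents are genuinely different, gives two distinct limit unit vectors.

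The main obstacle is Case C. The difficulty is twofold: first, the error between $\phi(t)$ and $\phi(t_{p_n/q_n})$ has to be dominated by the leading asymptotic term, which forces a case distinction according to the Diophantine quality of $t$ — essentially the same classification exploited by Jaffard \cite{Jaffard1996} in obtaining \eqref{SpectrumOfSingularities} and by Kapitanski–Rodnianski \cite{KapitanskiRodnianski1999}. Second, one has to verify that the coarse and fine sequences genuinely produce different limiting arguments, which comes down to controlling the phases of the Gauss sums associated with successive convergents $p_n/q_n$. Once these Diophantine and phase computations are in place, the three cases combine to give Theorem~\ref{TheoremIntroSpecified}.
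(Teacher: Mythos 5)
Your high-level reduction is the same as the paper's: pass to the parametric notion via Corollary~\ref{CorMainTool} and Lemma~\ref{DefOfTangentsLimits}, split into rationals of two arithmetic types and irrationals, and drive everything with the rational asymptotics of \cite{Eceizabarrena2019_Part1}. (Your identification of the spiral case with Gerver's points is consistent with the paper's condition $q\equiv 2\pmod 4$ after the change of variables, and your Case~B mechanism --- the winding of the argument of the $Z$-factor multiplying $h^{3/2}$ --- is exactly Proposition~\ref{Prop2}.) However, your Case~A mechanism is wrong. For $q\equiv 0,1,3\pmod 4$ the leading term of $\phi(t_{p,q}+h)-\phi(t_{p,q})$ is $e_{p,q}\frac{1+i}{\sqrt{2\pi}}h^{1/2}\tilde q^{-1/2}$ with $e_{p,q}$ a \emph{fixed} eighth root of unity, so on each side the normalised chord genuinely converges: there is a well-defined one-sided tangent on the right and another on the left, and you cannot produce two subsequences on the same side with distinct limiting arguments. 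The failure of tangency here is that the two one-sided tangents are $e_{p,q}(1\pm i)/\sqrt 2$ (the branch $\sqrt{-1}=-i$ on the left), which are perpendicular rather than diametrically opposite, so Definition~\ref{DefOfTangents} fails. Your announced strategy of ``killing one side'' cannot work in this case.

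In Case~C your sketch points in the right direction (compare the chord to $\phi(t_{p_n/q_n})$ with chords to auxiliary points at the self-similar scale $q_n^{-2}$), but it omits the two situations that make the proof hard, and it leans on an unsupported claim. First, writing $|\rho-p_n/q_n|=K_n/q_n^2$, the whole analysis bifurcates on $K=\liminf K_n$: when $K=0$ the irrational collapses onto the corner at the scale $q_n^{-3/2}$, \emph{every} intermediate approximation yields the same limiting direction $-e(1+i)/\sqrt2$, and one needs a second rescaling exploiting the self-similar copies of $F$ generated by the $\phi(-1/(16\pi^2\beta_n(s)))$ term to separate directions; your plan has no analogue of this. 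Second, even when $K>0$ the leading quantity $H(K/2\pi)$ may vanish, which forces a separate argument. Finally, you invoke that ``the Gauss-sum phases at successive convergents are genuinely different''; this is neither something you can control (the $e_{p_n,q_n}$ are just eighth roots of unity with no known equidistribution along convergents) nor what is needed --- the paper instead passes to a subsequence on which $e_n$ is \emph{constant} and obtains distinct limit directions by varying the auxiliary approximation point $Q$, using the explicit sign changes of $Y(\beta(s))$ at the points $s_m,\tilde s_m$. Also note the internal inconsistency that $|r_n|\le 1/(2\pi q_nq_{n+1})<q_n^{-2}$, so a perturbation ``on the scale $q_n^{-2}$'' cannot satisfy $\rho_n\ll|r_n|$; the auxiliary points must be perturbations of $t_{p_n/q_n}$ lying between it and $t_\rho$, not perturbations of $t_\rho$ itself, since no asymptotic is available at the irrational point.
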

\setcounter{section}{2}
\setcounter{thm}{13}

It suffices to prove Theorem~\ref{TheoremIntroSpecified} for the set $F = \phi([0,1/2\pi])$. Indeed, $\phi$ has the periodic property
\begin{equation}\label{PeriodicProperty}
\phi\left(t + \frac{1}{2\pi}\right) = \phi(t) + \frac{i}{2\pi}, \qquad \forall t \in \mathbb{R},
\end{equation} 
which can be deduced from \eqref{RelationshipDuistermaatUs} and the fact that $\phi_D$ is periodic of period 2. Consequently, 
\begin{equation}\label{CopiesOfF}
\phi(\mathbb{R}) = \bigcup_{k \in \mathbb{Z}}{\left( F + \frac{ik}{2\pi} \right) },
\end{equation}  
so $\phi(\mathbb{R})$ is a countable union of translations of $F$. Also, by Corollary~\ref{CorMainTool}, it is enough to prove that for any given $t \in [0,1/2\pi]$, no $\mathbb{V} \in \mathbb{S}^1$ is a tangent of $\phi$ in $t$ in the sense of Definition~\ref{DefOfTangents}. Last, to prove this we will use the characterisation given in Lemma~\ref{DefOfTangentsLimits}. 

Let us rescale the variable 
\[ t \in [0,1/2\pi] \mapsto x = 2\pi t \in [0,1] \]
and identify $t = t_x = x/2\pi$ with $x \in [0,1]$. Then,  
\begin{itemize}
	\item If $x = p/q \in [0,1]$ with $p$ and $q$ non-negative and coprime, we call $t_{p/q} = t_{p,q}$ a rational point. 
	\item If $x = \rho \in [0,1]$ is irrational, we call $t_{\rho}$ an irrational point.
\end{itemize}
The main result in this paper is the upcoming Theorem~\ref{TheoremOfTangents}. When $x \in [0,1] \cap \mathbb{Q}$, two different cases were predicted in Figure~\ref{FigureZooms}. In the rationals corresponding to the corner in Figure~\ref{FigureCurve18}, we will see that both the right and left limits in Lemma~\ref{DefOfTangentsLimits} exist, but that they are perpendicular and hence they do not coincide. On the other hand, in the rest of rationals corresponding to the spiral in Figure~\ref{FigureCurve12}, the limit in Lemma~\ref{DefOfTangentsLimits} can take any value in $\mathbb{S}^1$. Finally, if $x \in [0,1]\setminus\mathbb{Q}$, we will show that the limit in Lemma~\ref{DefOfTangentsLimits} can take any value in an open set of $\mathbb{S}^1$. In the three cases, Lemma~\ref{DefOfTangentsLimits} shows that $\phi$ has no tangent in the corresponding point.

\begin{thm}\label{TheoremOfTangents}
Let $\phi$ be Riemann's non-differentiable function \eqref{RiemannFunction} and $x = p/q \in [0,1] \cap \mathbb{Q}$ an irreducible fraction with $q > 0$. 
\begin{enumerate}[(a)]
	\item \label{itm:Corners} If $q \equiv 0,1,3 \pmod{4}$, there exists $e_{p,q} \in \mathbb{S}^1$ an eighth root of unity such that 
	\[ \lim_{r \to 0^+}\frac{\phi(t_{p,q} + r ) - \phi(t_{p,q})}{|\phi(t_{p,q} + r ) - \phi(t_{p,q})|} =  e_{p,q} \frac{1+i}{\sqrt{2}}, \qquad  \lim_{r \to 0^+}\frac{\phi(t_{p,q} - r ) - \phi(t_{p,q})}{|\phi(t_{p,q} - r ) - \phi(t_{p,q})|} =  e_{p,q} \frac{1-i}{\sqrt{2}}. \]
	\item \label{itm:Spirals} If $q \equiv 2 \pmod{4}$, then for any $\mathbb{V} \in \mathbb{S}^1$, there exist sequences $r_n, s_n \to 0^+$ (when $n \to \infty$) such that 
\[ \lim_{n \to \infty}{  \frac{ \phi(t_{p,q} + r_n) - \phi(t_{p,q})}{\left| \phi(t_{p,q} + r_n) - \phi(t_{p,q}) \right| }} = \mathbb{V} = \lim_{n \to \infty}{  \frac{ \phi(t_{p,q} - s_n) - \phi(t_{p,q})}{\left| \phi(t_{p,q} - s_n) - \phi(t_{p,q}) \right| }} \] 
\end{enumerate}
Let $x = \rho \in [0,1] \setminus \mathbb{Q}$. 
\begin{enumerate}[(a)]
	\setcounter{enumi}{2}
	\item \label{itm:Irrationals} There exists an open set $V \subset \mathbb{S}^1$ such that for any $\mathbb{V} \in V$, there exists a sequence $r_n \to 0$ (when $n \to \infty$) such that 
\[  \lim_{n \to \infty}{ \frac{ \phi(t_{\rho} + r_n) - \phi(t_{\rho}) }{ \left| \phi(t_{\rho} + r_n) - \phi(t_{\rho}) \right| } } = \mathbb{V}.  \]
\end{enumerate}
\end{thm}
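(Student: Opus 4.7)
The overall approach is to apply Corollary~\ref{CorMainTool} together with Lemma~\ref{DefOfTangentsLimits}: in each case it suffices to show that the normalized one-sided difference quotients either converge to two non-antipodal one-sided directions (case (\ref{itm:Corners})), or admit subsequences tending to every direction in an open subset of $\mathbb{S}^1$ (cases (\ref{itm:Spirals}) and (\ref{itm:Irrationals})); either conclusion rules out a tangent. The central input is the rational-point asymptotic expansion of $\phi$ established in \cite{Eceizabarrena2019_Part1}, whose leading coefficient is a rescaling of the classical quadratic Gauss sum $G(p,q) = \sum_{k=0}^{q-1} e^{2\pi i p k^2/q}$. The three parts of the theorem correspond exactly to the three regimes of this sum: $|G(p,q)| = \sqrt{q}$ for $q$ odd, $|G(p,q)| = \sqrt{2q}$ for $4 \mid q$, and $G(p,q) = 0$ for $q \equiv 2 \pmod 4$; moreover, whenever $G(p,q) \neq 0$ its argument is a multiple of $\pi/4$.

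For part (\ref{itm:Corners}), the expansion from \cite{Eceizabarrena2019_Part1} takes the form $\phi(t_{p,q} + r) - \phi(t_{p,q}) = c_{p,q}\sqrt{r} + o(\sqrt{r})$ as $r \to 0^+$, where $c_{p,q}$ is proportional to $G(p,q)$ times a universal constant. The Gauss sum regimes above force $\arg c_{p,q}$ to be a multiple of $\pi/4$, so the normalized right-sided limit is an eighth root of unity times $(1+i)/\sqrt 2 = e^{i\pi/4}$. The corresponding left-sided expansion is obtained by replacing $r$ with $-r$ in the Jacobi theta integral driving the asymptotics, which rotates the leading coefficient by $\pi/2$ and produces the factor $(1-i)/\sqrt 2 = e^{-i\pi/4}$. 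Since the two one-sided directions differ by $\pi/2$ and not by $\pi$, they are not antipodal, so no tangent can exist.

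For part (\ref{itm:Spirals}), the vanishing $G(p,q) = 0$ kills the $\sqrt r$ principal term, and the expansion in \cite{Eceizabarrena2019_Part1} exposes a logarithmic-spiral behaviour, as visible in Figure~\ref{FigureCurve12}. Schematically, the argument of $\phi(t_{p,q} + r) - \phi(t_{p,q})$ is a monotonic unbounded function of $\log(1/r)$ as $r \to 0^+$, while the modulus has a definite power-law rate. Given a target $\mathbb{V} \in \mathbb{S}^1$, the plan is to invert the winding equation to locate $r_n \to 0^+$ whose leading phase matches $\arg \mathbb{V}$ modulo $2\pi$, and then to control the remainder term so that the normalized limit actually reaches $\mathbb{V}$; the left-sided sequence $s_n$ is constructed identically. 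The work here amounts to a quantitative remainder estimate that falls out of the same asymptotic analysis.

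Part (\ref{itm:Irrationals}) is the main obstacle, because no direct asymptotic for $\phi$ is available at an irrational $t_\rho$. The strategy is to approximate $\rho$ by its continued fraction convergents $p_n/q_n$, which satisfy $|t_\rho - t_{p_n/q_n}| \leq 1/(q_n q_{n+1})$, and to decompose
\[
\phi(t_\rho + r) - \phi(t_\rho) = \bigl(\phi(t_{p_n/q_n} + r') - \phi(t_{p_n/q_n})\bigr) + \bigl(\phi(t_{p_n/q_n}) - \phi(t_\rho)\bigr),
\]
with $r' = r + t_\rho - t_{p_n/q_n}$, at scales $r$ calibrated to $|t_\rho - t_{p_n/q_n}|$. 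The first summand is controlled by parts (\ref{itm:Corners}) or (\ref{itm:Spirals}) depending on the residue of $q_n$ modulo $4$, and the second is a fixed displacement to be absorbed into the error at the correct scale. Varying $r$ around each convergent then produces limit directions filling an open subset of $\mathbb{S}^1$. The hardest step is the delicate balance between the two summands: the precise diophantine quality of $\rho$ determines which regime dominates, forcing a case split in the spirit of the multifractal analysis of Jaffard \cite{Jaffard1996} and of Kapitanski--Rodnianski \cite{KapitanskiRodnianski1999}.
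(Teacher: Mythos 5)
Parts (\ref{itm:Corners}) and (\ref{itm:Spirals}) of your plan essentially coincide with the paper's proof (Propositions~\ref{Prop013} and~\ref{Prop2}): the one-sided limits in (\ref{itm:Corners}) come straight from the leading $h^{1/2}$ term of \eqref{Asymptotic013} and the branch $\sqrt{-1}=-i$, and in (\ref{itm:Spirals}) the vanishing of the Gauss sum leaves the oscillatory factor $Z(b(h))$ in front of $h^{3/2}$. One inaccuracy in (\ref{itm:Spirals}): the phase of $Z(\tilde q^2/t)$ is $-t/(16\tilde q^2)+\eta(t)$ with $|\eta|\le\pi/4$, so the winding is in $1/r$, not in $\log(1/r)$, and the paper never needs (or proves) monotonicity of the argument --- it only needs that the continuous, periodic function $\operatorname{arg}Z(\tilde q^2 t^{-1})$ sweeps a full turn on each period, which follows from the $\pi/4$ error bound and the intermediate value theorem. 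Your ``invert the winding equation'' step should be replaced by this softer range argument, since monotonicity of the phase of the full sum $Z$ is not obvious.

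Part (\ref{itm:Irrationals}) is where the genuine gap lies. Your decomposition around the convergents is the right starting point, but the claim that $\phi(t_{p_n/q_n})-\phi(t_\rho)$ is ``a fixed displacement to be absorbed into the error at the correct scale'' is false: writing $|\rho-p_n/q_n|=K_n/q_n^2$, both this displacement and the increment $\phi(t_{p_n,q_n}+Q/(2\pi q_n^2))-\phi(t_{p_n,q_n})$ are of the \emph{same} order $q_n^{-3/2}$, and the whole proof consists in comparing them head-to-head: after rescaling by $q_n^{3/2}$ the relevant direction is that of $H(Q/2\pi)-H(K/2\pi)$, where $H$ is the limit of the rescaled asymptotics (which requires extracting subsequences with $e_{p_n,q_n}$ constant and $c_n/q_n\to a$, plus the uniform convergence Lemma~\ref{LemmaUniformConvergence} --- none of which your outline accounts for). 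Worse, your plan gives no mechanism for the degenerate case $K=\liminf K_n=0$: there every approximation at the scale $1/q_n^2$ collapses to the single direction $e\,(1+i)/\sqrt 2$ (see \eqref{DirectionCollapsed}), and the paper must perform a \emph{second} rescaling by $s_{m+\mu}^{-3/2}$ exploiting the self-similar term $\phi(-1/(16\pi^2\beta_n(s)))$ in \eqref{DefOfHn}, locating $K_n$ inside a rescaled copy of $F$ and extracting directions from $\phi((2-\mu)/2\pi)-\phi((2-\alpha)/2\pi)$ as $\mu$ varies. Your appeal to ``diophantine quality'' gestures at this dichotomy but does not supply the idea that actually closes it. A minor simplification you miss: infinitely many convergents have odd denominator, so one never needs the $q\equiv 2\pmod 4$ asymptotics in this part.
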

The cases (\ref{itm:Corners}) and (\ref{itm:Spirals}) will easily follow from the asymptotic behaviour of $\phi$ around rationals that was computed in \cite{Eceizabarrena2019_Part1}. However, the lack of the asymptotic behaviour around irrationals makes the case (\ref{itm:Irrationals}) more complicated. The proof will be based in the continued fraction approximations and the asymptotic behaviour around them.  
We prove each part of Theorem~\ref{TheoremOfTangents} separately in the upcoming Propositions~\ref{Prop013}, \ref{Prop2} and \ref{PropIrrat}.

\subsection{Proof of lemma~\ref{LemmaHausdorffContent}}\label{ProofOfLemmas}
The proof of Lemma~\ref{LemmaHausdorffContent} is based in two auxiliary lemmas
\begin{lem}\label{LemmaDiameter}
Let $A,B \subset \mathbb{R}^n$. If $\overline{A} \cap \overline{B} \neq \emptyset$, then
\[ \operatorname{diam}(A \cup B) \leq \operatorname{diam}A + \operatorname{diam}B. \]
\end{lem}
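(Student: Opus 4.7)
The plan is to reduce the inequality to bounding distances between arbitrary points $x, y \in A \cup B$ by $\operatorname{diam} A + \operatorname{diam} B$, and then take the supremum. First I would dispose of the trivial case where at least one of $\operatorname{diam}A$ or $\operatorname{diam}B$ is infinite, since then the right-hand side is $+\infty$ and the inequality is automatic. I would also dispose of the case $A = \emptyset$ or $B = \emptyset$ (where the hypothesis actually forces the other to also be empty under the convention $\overline{\emptyset} = \emptyset$, or the inequality reduces to $\operatorname{diam}(A \cup B) = \operatorname{diam} A$ or $\operatorname{diam} B$, which is immediate).

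Next I would pick $x, y \in A \cup B$ and split into three cases. If both $x, y \in A$, then $|x-y| \leq \operatorname{diam} A \leq \operatorname{diam} A + \operatorname{diam} B$, and similarly if both lie in $B$. The interesting case is $x \in A$, $y \in B$ (or vice versa). Here I invoke the hypothesis: fix $z \in \overline{A} \cap \overline{B}$ and choose sequences $(a_n) \subset A$, $(b_n) \subset B$ with $a_n \to z$ and $b_n \to z$. By the triangle inequality,
\[
|x - y| \leq |x - a_n| + |a_n - z| + |z - b_n| + |b_n - y|.
\]
Since $x, a_n \in A$ we have $|x - a_n| \leq \operatorname{diam} A$, and since $y, b_n \in B$ we have $|b_n - y| \leq \operatorname{diam} B$. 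Letting $n \to \infty$ makes the two middle terms vanish, yielding $|x - y| \leq \operatorname{diam} A + \operatorname{diam} B$.

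In all three cases we obtain the same bound, so taking the supremum over $x, y \in A \cup B$ gives $\operatorname{diam}(A \cup B) \leq \operatorname{diam} A + \operatorname{diam} B$. I do not expect any serious obstacle: the only subtlety is remembering that the meeting point $z$ need not belong to either $A$ or $B$, which is exactly what forces the passage through sequences and a limit rather than a direct triangle inequality with three fixed points. Everything else is bookkeeping.
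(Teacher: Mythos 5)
Your proof is correct and follows essentially the same route as the paper: case split on whether $x,y$ lie in the same set, then a triangle inequality through a point $z \in \overline{A} \cap \overline{B}$. The only difference is that the paper writes $|x-z| \leq \operatorname{diam}A$ directly (implicitly using $\operatorname{diam}\overline{A} = \operatorname{diam}A$), whereas you make this step explicit with approximating sequences — a harmless extra precaution.
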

\begin{proof}
Let $x,y \in A \cup B$. If $x,y \in A$, then $|x-y| \leq \operatorname{diam}A$, and if $x,y \in B$, then $|x-y| \leq \operatorname{diam}B$. In any other case, let $z \in \overline{A} \cap \overline{B}$ so that $ |x-y| \leq |x-z| + |z-y| \leq \operatorname{diam}A + \operatorname{diam}B$.
\end{proof}
\begin{lem}\label{LemmaOrderingCovering}
Let $f:\mathbb{R} \to \mathbb{R}^n$ be a continuous function and $a,b \in \mathbb{R}$ such that $a<b$. Let $N \in \mathbb{N}$ and $U_1,\ldots,U_N \subset \mathbb{R}^n$ a minimal covering by open sets of $f([a,b])$. Then, the sets $U_1,\ldots,U_N$ can be reordered so that 
\[ U_{k+1} \cap \bigcup_{i=1}^{k}U_i \neq \emptyset, \qquad \forall k \in \{1, \ldots, N-1\}. \]
\end{lem}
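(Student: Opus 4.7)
The plan is to use the fact that $f([a,b])$ is connected, being the continuous image of the connected set $[a,b]$, together with the minimality of the covering, in order to construct the ordering greedily and obtain a contradiction whenever the construction gets stuck before exhausting all $N$ sets.

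First I would extract the consequence of minimality that will be needed: for each $i \in \{1,\ldots,N\}$, removing $U_i$ from the collection must leave at least one point of $f([a,b])$ uncovered, so there exists $p_i \in f([a,b]) \cap U_i$ with $p_i \notin U_j$ for all $j \neq i$. In particular, every $U_i$ meets $f([a,b])$.

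Next I would carry out the greedy construction. Without loss of generality set $I_1 = \{1\}$, and inductively, given a set of indices $I_k \subset \{1, \ldots, N\}$ of size $k$, if there exists $j \notin I_k$ with $U_j \cap \bigcup_{i \in I_k} U_i \neq \emptyset$, define $I_{k+1} = I_k \cup \{j\}$; otherwise stop. The claim of the lemma is exactly that this process terminates at $I_N$, because then relabelling the $U_i$'s in the order they were added produces the desired reordering.

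The heart of the argument is to rule out early termination. Suppose the process halts at some $I_k$ with $k < N$. Let $A = \bigcup_{i \in I_k} U_i$ and $B = \bigcup_{j \notin I_k} U_j$. Both are open, and by the halting condition $A \cap B = \emptyset$. Since the $U_i$'s cover $f([a,b])$, we have $f([a,b]) \subset A \cup B$. Moreover, using the points $p_i$ furnished by minimality, $p_1 \in f([a,b]) \cap A$ and, for any $j \notin I_k$, $p_j \in f([a,b]) \cap B$, so both $A \cap f([a,b])$ and $B \cap f([a,b])$ are nonempty. Thus $A$ and $B$ disconnect $f([a,b])$, contradicting its connectedness. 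Hence the process cannot stop before reaching $I_N$, and the reordering exists.

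I do not anticipate a substantial obstacle: the only subtle point is that both sides of the putative separation must meet $f([a,b])$, and this is precisely where the minimality hypothesis is used (otherwise $B$ could consist of ``redundant'' open sets disjoint from $f([a,b])$, and no contradiction would follow). Once this observation is in place the proof is essentially a two-line application of connectedness.
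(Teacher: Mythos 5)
Your proof is correct, and it takes a genuinely different route from the paper's. The paper argues by walking along the parametrised curve: after rescaling to $g:[0,1]\to\mathbb{R}^n$, it defines $\epsilon_k=\sup\{\epsilon: g([0,\epsilon])\subset\bigcup_{i=1}^k U_i\}$, observes that $g(\epsilon_k)$ lies on the boundary of the union already chosen (hence outside it, by openness), and picks as $U_{k+1}$ any set of the cover containing $g(\epsilon_k)$; openness of $U_{k+1}$ then forces it to meet $\bigcup_{i=1}^k U_i$. Minimality enters only to conclude $N=k$ when $\epsilon_k=1$. You instead discard the parametrisation entirely and use only that $f([a,b])$ is connected: if the greedy growth of the index set stalls at $I_k$ with $k<N$, the unions $A$ and $B$ over $I_k$ and its complement give two disjoint open sets each meeting $f([a,b])$ (this is where you invoke minimality, to guarantee every $U_j$ actually meets the image), yielding a separation and a contradiction. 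Your argument is shorter and strictly more general — it proves the reordering statement for any connected set admitting a minimal finite open cover, with no mention of curves — whereas the paper's version is more constructive, explicitly identifying the next set as the one containing the first point of the curve not yet covered. Both proofs are sound; your identification of where minimality is indispensable (ruling out ``redundant'' sets disjoint from the image) is exactly right.
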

\begin{proof}
Define $g(x) = f(a + (b-a)x)$ to rescale the interval $[a,b]$ to $[0,1]$. We prove it by induction on $k$. For $k=1$, choose $U_1$ such that $g(0) \in U_1$, and define
\[ \epsilon_1 = \sup\{ \epsilon \in [0,1] \quad : \quad g([0,\epsilon]) \subset U_1 \}. \]
If $\epsilon_1 = 1$, then $N=1$ and we are done. If not, $g(\epsilon_1) \in \partial U_1$. In particular,  $g(\epsilon_1) \notin U_1$ because $U_1$ is open, so $\exists U_2 \neq U_1$ such that $g(\epsilon_1) \in U_2$. Moreover, since $U_2$ is also open, $U_1 \cap U_2 \neq \emptyset$.

Assume now that $U_1,\ldots,U_k$ are ordered according to the statement and define
\[ \epsilon_k = \sup\{ \epsilon \in [0,1] \quad : \quad g([0,\epsilon]) \subset \bigcup_{i=1}^kU_i \}. \]
If $\epsilon_k = 1$, then $N=k$ and we are done. If not, $g(\epsilon_k) \in \partial\left( \bigcup_{i=1}^kU_i \right)$, and since all $U_1,\ldots, U_k$ are open, $g(\epsilon_k) \notin \bigcup_{i=1}^kU_i$. Then, there exists $U_{k+1}$ different from the previous ones such that $g(\epsilon_k) \in U_{k+1}$. Moreover, $U_{k+1}$ is also open, so $U_{k+1} \cap \bigcup_{i=1}^kU_i \neq \emptyset$.
\end{proof}

\begin{proof}[Proof of Lemma~\ref{LemmaHausdorffContent}]
For the first part, $f([a,b]) $ covers itself, so 
\[ \mathcal{H}^1_{\infty}(f([a,b])) \leq \operatorname{diam} f([a,b]) . \]
Let $\{U_i\}_{i\in I}$ be a covering by open sets of $f([a,b])$. By compactness, we may  extract a finite subcovering $\{U_i\}_{i=1}^N$, where $N \in \mathbb{N}$. Then, 
\begin{equation}\label{LowerBoundForH1}
 \operatorname{diam}\left( f([a,b]) \right) \leq \operatorname{diam} \left( \bigcup_{i=1}^NU_i \right) \leq \sum_{i=1}^N{\operatorname{diam}U_i} \leq \sum_{i\in I}{\operatorname{diam}U_i}, 
\end{equation}
where the second inequality holds by Lemma~\ref{LemmaOrderingCovering} and induction on Lemma~\ref{LemmaDiameter}. As a consequence, $  \operatorname{diam} f([a,b])   \leq \mathcal{H}^1_{\infty}(f([a,b]))$.

For the second part, the upper bound is trivial if we choose $B(f(x),r)$ as a covering. Assume there exists $\epsilon>0$  such that $f(x+\epsilon) \notin B(f(x),r)$ and define
\[ \epsilon_0 = \sup\{ \epsilon > 0 \quad : \quad f([x,x+\epsilon]) \subset B(f(x),r)  \}. \]
Then, $f(x+\epsilon_0) \in \partial B(f(x),r)$, so $|f(x+\epsilon_0) - f(x)| = r$. By the first part,
\begin{equation*}
\begin{split}
\mathcal{H}^1_{\infty}(f(\mathbb{R}) \cap B(f(x),r)) & \geq \mathcal{H}^1_{\infty}(f((x,x+\epsilon_0)) \cap B(f(x),r))  = \mathcal{H}^1_{\infty}(f((x,x+\epsilon_0))) \\
& = \operatorname{diam} \left( f((x,x+\epsilon_0)) \right) \\
& \geq |f(x+\epsilon_0) - f(x)| = r.
\end{split}
\end{equation*} 
The case where an $\epsilon>0$ exists such that $f(x-\epsilon) \notin B(f(x),r)$ is analogous.
\end{proof}

\section{Tangents in rationals with $q \equiv 0,1,3 \pmod{4}$}\label{Section013}

In \cite{Eceizabarrena2019_Part1}, the author computed the asymptotic behaviour of $\phi$ around every rational point $t_{p,q}$. Let us briefly explain the main ideas here. The behaviour around 0 can be computed directly using the Poisson summation formula and the asymptotic behaviour of the error function. The one around $t_{1,2}$ can be deduced at once by the identity $\phi(t_{1,2}+h)-\phi(t_{1,2}) = \phi(4h)/2-\phi(h)$. Then, it is proved that the behaviour around every other rational point can be reduced to one of the two, so that the behaviour around $t_{p,q}$ is essentially a rescaling of the behaviour around either 0 or $t_{1,2}$ that depends mainly on $q$. This reduction is performed using the well-known interaction between Jacobi's $\theta$-function, which can be seen as the derivative of $\phi$ (at least formally, see \eqref{DerivationOfPhi}), and the $\theta$-modular group, $\Gamma_\theta$. What lies behind this mathematical procedure is a surprising relationship between Gauss sums, the optical Talbot effect and several symmetries of the Schr\"odinger equation. 

Let us write here the asymptotic behaviour of $\phi$ around points $t_{p,q}$ with $q \equiv 0,1,3 \pmod{4}$ that was given in \cite[Proposition 6.1]{Eceizabarrena2019_Part1}.
\begin{prop}\label{PropositionFromPart1}
If $p,q \in \mathbb{N}$ such that $p<q$, $\operatorname{gcd}(p,q)=1$ and $q \equiv 0,1,3 \pmod{4}$, then
\begin{equation}\label{Asymptotic013}
\begin{split}
\phi(t_{p,q} + h) - \phi(t_{p,q}) & = e_{p,q}\, \frac{1+i}{\sqrt{2\pi}}\,\frac{h^{1/2}}{\tilde{q}^{1/2}} - 4\,e_{p,q}\, \frac{1-i}{\sqrt{2\pi}}\,Y(b(h))\,\tilde{q}^{3/2}\,h^{3/2} + O\left( q^{7/2}h^{5/2} \right),
\end{split}
\end{equation} 
when $|h| < \left( 4\pi \frac{|c_{\pm}|}{\tilde{q}}\,\tilde{q}^2 \right)^{-1}$. Here, there exist $c_+,c_- \in \mathbb{R}$ such that $\tilde{q} < c_+ < 4\tilde{q}$ and $-4\tilde{q} < c_- < -\tilde{q}$, define $c_\pm = c_+$ when  $h>0$ and $c_\pm = c_-$ when $h<0$, and 
\begin{equation}\label{DefOfB}
\tilde{q} =\left\{
 \begin{array}{ll}
q, & \text{if } q \text{ is odd,} \\
q/2, & \text{if } q \text{ is even,}
\end{array}
\right. ,
\qquad   
Y(h) = \sum_{k=1}^{\infty}{\frac{e^{ik^2/(4h)}}{k^2}}, 
\qquad 
b(h) = \left\{ \begin{array}{ll}
\frac{\tilde{q}^2h}{1 + 4\pi c_+ \tilde{q} h}, & \text{ when } h  \geq 0, \\
\frac{\tilde{q}^2h}{1 + 4\pi c_- \tilde{q} h}, & \text{ when } h  < 0.
\end{array}\right. 
\end{equation}
Also,  $\sqrt{-1} = -i$ when $h <0$ and $e_{p,q}$ is an eighth root of unity only depending on $p$ and $q$.

\end{prop}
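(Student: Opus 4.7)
The plan is to follow the strategy the author sketches above the statement: reduce the expansion at a general rational $t_{p,q}$ to the expansion at $0$, and establish the base case directly by Poisson summation. Because this is a \emph{Part~1} result, I expect a technical but self-contained proof whose principal ingredients are the theta-modular transformation law and the classical evaluation of quadratic Gauss sums.

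\textbf{Base case at $0$.} First I would establish the expansion for $p/q = 0/1$ (so $\tilde q = 1$ and $e_{p,q}=1$),
\[ \phi(h) - \phi(0) = \frac{1+i}{\sqrt{2\pi}}\,h^{1/2} - 4\,\frac{1-i}{\sqrt{2\pi}}\,Y(h)\,h^{3/2} + O(h^{5/2}), \qquad h>0. \]
The route is to write $\phi$ as a time-integral of the periodic solution of the free Schr\"odinger equation with Dirac-comb data, as in \eqref{DerivationOfPhi}; apply Poisson summation in the spatial variable to dualise the comb (a metaplectic step); re-express the result through Fresnel integrals; and then expand the complementary error function asymptotically. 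The $h^{1/2}$ term comes from the Gaussian profile at the peak, while $Y(h)$ packages the oscillatory Fresnel-tail contributions.

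\textbf{Reduction to the base case.} For a general rational I would split the defining series
\[ \phi(t_{p,q}+h) - \phi(t_{p,q}) = \sum_{k\neq 0} e^{-2\pi i k^2 p/q}\,\frac{e^{-4\pi^2 i k^2 h}-1}{-4\pi^2 k^2} \]
by the residue class of $k$ modulo $\tilde q$. The phase $e^{-2\pi i k^2 p/q}$ depends only on this residue, and the two definitions of $\tilde q$ are precisely what makes the squares mod $q$ partition cleanly; if $q\equiv 2\pmod 4$ the phase collapses differently and one must reduce to $t_{1,2}$ instead, which is why that case is excluded here. Writing $k = m\tilde q+\ell$ and summing over $m$ first for fixed $\ell$, one recognises a series of the same shape as that defining $\phi$ but with the spatial variable rescaled to $\tilde q^{2}h$; the remaining $\ell$-sum becomes a quadratic Gauss sum $\sum_{\ell}e^{-2\pi i \ell^2 p/q}$, evaluating to $e_{p,q}\sqrt{\tilde q}$ for an eighth root of unity $e_{p,q}$ by classical results on Gauss sums.

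\textbf{The modular correction and error bookkeeping.} A naive version of the previous step would produce the base-case expansion evaluated at $\tilde q^{2}h$. The appearance of $b(h)=\tilde q^{2}h/(1+4\pi c_{\pm}\tilde q h)$ reflects that what really happens is a full M\"obius substitution coming from an element of the theta modular group $\Gamma_\theta$ sending $p/q$ to $\infty$; the constants $c_{\pm}$ are determined by choosing a convenient representative of such a matrix, and the asymmetric bounds $\tilde q<c_+<4\tilde q$ and $-4\tilde q<c_-<-\tilde q$ record the sign of $h$. The main obstacle I foresee is tracking the remainder with sufficiently sharp $q$-dependence to obtain the uniform bound $O(q^{7/2}h^{5/2})$: the Poisson summation on the inner sum and the Gauss-sum extraction each contribute factors of $\tilde q$ that must be balanced against the truncation order of the Fresnel expansion and the range of $h$ on which the M\"obius denominator stays bounded away from zero, which explains the restriction $|h|<(4\pi|c_{\pm}|\tilde q)^{-1}$.
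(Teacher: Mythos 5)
First, note that this proposition is not proved in the present paper at all: it is quoted from \cite[Proposition 6.1]{Eceizabarrena2019_Part1}, and the surrounding text only sketches the strategy of that external proof. Your outline reproduces exactly that sketch --- Poisson summation and Fresnel/error-function asymptotics at $0$, reduction of a general $t_{p,q}$ to the origin via the action of $\Gamma_\theta$ on the theta function, and extraction of a quadratic Gauss sum of modulus $\sqrt{\tilde q}$ whose normalised value is the eighth root of unity $e_{p,q}$ --- so in terms of approach you are aligned with the intended argument, including the correct explanation of why $q\equiv 2\pmod 4$ is excluded (the relevant Gauss sum degenerates and one must reduce to $t_{1,2}$ instead).

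However, as submitted this is a plan rather than a proof, and the deferred parts are precisely where the content of the statement lies. The point of \eqref{Asymptotic013} for this paper is its uniformity in $q$: in Section~\ref{SectionIrrationals} it is applied along continued-fraction denominators $q_n\to\infty$ with $h\sim K_n/q_n^2$, so the explicit power $q^{7/2}$ in the error term, the admissible range $|h|<(4\pi|c_\pm|\tilde q)^{-1}$, and the exact form of $b(h)$ and of the constants $c_\pm$ are not bookkeeping details but the substance of the result, and none of them is established. There is also a concrete slip at your very first step: writing $\phi(t_{p,q}+h)-\phi(t_{p,q})=\sum_{k\neq 0}e^{-2\pi ik^2p/q}\,\frac{e^{-4\pi^2ik^2h}-1}{-4\pi^2k^2}$ discards the $k=0$ contribution, which equals $ih$ and is \emph{not} $O(h^{3/2})$. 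Since \eqref{Asymptotic013} contains no linear term, this $ih$ must be shown to be absorbed into the Poisson-summed main term (as happens at the origin, where $\phi(h)=i\int_0^h\theta(t)\,dt$ as in \eqref{DerivationOfPhi} and the $m=0$ dual term produces exactly the $h^{1/2}$ leading order); your decomposition as stated would leave an unaccounted $O(h)$ discrepancy. To stand as a proof, the argument would need to write down the $\Gamma_\theta$ substitution explicitly, identify $c_\pm$ and $e_{p,q}$, and run the integration-by-parts estimates with all constants tracked in $q$.
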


A direct consequence of this asymptotic behaviour is
\[ \lim_{h \to 0^+}{ \frac{ \phi(t_{p,q} + h ) - \phi(t_{p,q}) }{ e_{p,q}\, \frac{1+i}{\sqrt{2\pi}}\,\frac{h^{1/2}}{\tilde{q}^{1/2}}  } } = 1. \]
The term in the denominator is the parametrisation of a straight line with direction $e_{p,q}\,(1+i)/\sqrt{2}$, so we expect that $F$ has a tangent to the right of $t_{p,q}$ given by $e_{p,q}\, (1+i)/\sqrt{2}$.

\begin{prop}\label{Prop013}
Let $p,q \in \mathbb{N}$ be such that $q >0$, $\operatorname{gcd}(p,q) = 1$ and  $q \equiv 0,1,3 \pmod{4}$. Then, 
\[ \lim_{h \to 0^+}\frac{\phi(t_{p,q} + h ) - \phi(t_{p,q})}{|\phi(t_{p,q} + h) - \phi(t_{p,q})|} =  e_{p,q} \frac{1+i}{\sqrt{2}}, \]
so $\phi$ has a tangent on the right at $t_{p,q}$ in direction $e_{p,q}\,(1+ i)/\sqrt{2}$.  Also,
\[ \lim_{h \to 0^+}\frac{\phi(t_{p,q} - h ) - \phi(t_{p,q})}{|\phi(t_{p,q} - h ) - \phi(t_{p,q})|} =  e_{p,q} \frac{1-i}{\sqrt{2}}, \]
so it has a tangent on the left in direction $e_{p,q}\,(1- i)/\sqrt{2}$. 
\end{prop}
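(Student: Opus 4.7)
The plan is to read off both tangent directions directly from the asymptotic expansion of Proposition~\ref{PropositionFromPart1}. In \eqref{Asymptotic013}, the leading term is of order $h^{1/2}$, while the subsequent terms are of orders $h^{3/2}$ and $h^{5/2}$. So, provided the coefficient $Y(b(h))$ appearing in the second term remains bounded as $h \to 0$, the direction of $\phi(t_{p,q}+h)-\phi(t_{p,q})$ is asymptotically dictated by the leading term, and the normalized quotient converges to the unit vector in that direction.

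The first thing I would verify is precisely the boundedness of $Y$: since $Y(h) = \sum_{k\geq 1} e^{ik^2/(4h)}/k^2$, the triangle inequality gives $|Y(h)| \leq \sum_{k \geq 1} k^{-2} = \pi^2/6$ for every $h$, uniformly. With $p,q$ fixed, the quantities $\tilde{q}$ and $c_\pm$ are also fixed, so the smallness hypothesis on $h$ in Proposition~\ref{PropositionFromPart1} becomes an effective bound $|h| < h_0(p,q)$. In that range, the second and third summands of \eqref{Asymptotic013} are $O(h^{3/2})$ with constants depending only on $p$ and $q$.

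For $h > 0$, the plan is to factor out the leading term to write
\[ \phi(t_{p,q}+h) - \phi(t_{p,q}) = e_{p,q}\,\frac{1+i}{\sqrt{2\pi}}\,\frac{h^{1/2}}{\tilde{q}^{1/2}}\,\bigl(1 + O(h)\bigr). \]
Using $|e_{p,q}| = 1$ and $|1+i|=\sqrt{2}$, the modulus satisfies $|\phi(t_{p,q}+h)-\phi(t_{p,q})| = (\pi\tilde{q})^{-1/2}\,h^{1/2}\,|1+O(h)|$, and dividing the two expressions and letting $h \to 0^+$ yields the claimed right tangent $e_{p,q}(1+i)/\sqrt{2}$.

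For $h < 0$, the only delicate point—and the piece of bookkeeping I would take the most care with—is the branch convention $\sqrt{-1} = -i$ fixed in Proposition~\ref{PropositionFromPart1}. Under that convention, $h^{1/2} = -i\,|h|^{1/2}$ when $h < 0$, and the identity $-i(1+i) = 1-i$ turns the leading term into $e_{p,q}(1-i)/\sqrt{2\pi}\cdot |h|^{1/2}/\tilde{q}^{1/2}$. Repeating the same factorization then gives the left tangent $e_{p,q}(1-i)/\sqrt{2}$. No genuine obstacle is expected: the hard analytic work was carried out in \cite{Eceizabarrena2019_Part1} to produce the asymptotic expansion, and what remains is a straightforward ratio computation, together with careful tracking of the square-root branch for negative $h$.
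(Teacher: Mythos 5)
Your proposal is correct and follows essentially the same route as the paper: factor out the leading $h^{1/2}$ term of the asymptotic from Proposition~\ref{PropositionFromPart1}, absorb the $Y(b(h))$ term into an $O(h^{3/2})$ error (your explicit bound $|Y|\leq \pi^2/6$ makes this cleanly rigorous), and track the branch $\sqrt{-1}=-i$ for $h<0$ to convert $1+i$ into $1-i$. No substantive differences from the paper's argument.
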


\begin{proof}
It is enough to work with the shortened version of the asymptotic \eqref{Asymptotic013},
\[ \phi(t_{p,q} + h ) - \phi(t_{p,q})  = e_{p,q}\, \frac{1+i}{\sqrt{2\pi}}\,\frac{h^{1/2}}{\tilde{q}^{1/2}} + O\left( q^{3/2}h^{3/2} \right). \]
Let $h >0$. Then,
\[ \frac{\phi(t_{p,q} + h ) - \phi(t_{p,q})}{|\phi(t_{p,q} + h ) - \phi(t_{p,q})|} = \frac{ e_{p,q} \frac{1+i}{\sqrt{2\pi}} \frac{h^{1/2}}{\tilde{q}^{1/2}} \left( 1 + O(q^2h) \right) }{  \frac{1}{\sqrt{\pi}} \frac{h^{1/2}}{\tilde{q}^{1/2}} \left| 1 + O(q^2h) \right| } =  e_{p,q} \frac{1+i}{\sqrt{2}} \frac{ 1 + O(q^2h) }{ \left| 1 + O(q^2h) \right| }, \]
so 
\[ \lim_{h \to 0^+}\frac{\phi(t_{p,q} + h ) - \phi(t_{p,q})}{|\phi(t_{p,q} + h ) - \phi(t_{p,q})|} =  e_{p,q} \frac{1+i}{\sqrt{2}}. \]
On the other hand, with the branch $\sqrt{-1} = -i$,
\[ \phi(t_{p,q} - h ) - \phi(t_{p,q})  = e_{p,q}\, \frac{1-i}{\sqrt{2\pi}}\,\frac{h^{1/2}}{\tilde{q}^{1/2}} + O\left( q^{3/2}h^{3/2} \right). \]
The same procedure as above shows that 
\[ \lim_{t \to 0}\frac{\phi(t_{p,q} - h ) - \phi(t_{p,q})}{|\phi(t_{p,q} - h ) - \phi(t_{p,q})|} =  e_{p,q} \frac{1-i}{\sqrt{2}}. \] 
\end{proof}

\section{Tangents in rationals with $q \equiv 2\pmod{4}$}\label{Section2}
The asymptotic behaviour of $\phi$ around a rational $t_{p,q}$ with $q \equiv 2\pmod{4}$ is considerably different, as was proved in \cite[Proposition 7.1]{Eceizabarrena2019_Part1}. We gather it in the following proposition.
\begin{prop}
If $p,q \in \mathbb{N}$ such that $p<q$, $\operatorname{gcd}(p,q)=1$ and $q \equiv 2 \pmod{4}$, then 
\begin{equation}\label{Asymptotic2}
\phi(t_{p,q} + h) - \phi(t_{p,q}) = -16\,e_{p,q}\,\frac{1-i}{\sqrt{2\pi}}\, Z(b(h))\,\tilde{q}^{3/2}h^{3/2} + O(q^{7/2}h^{5/2}), 
\end{equation} 
where 
\[ Z(h) = \sum_{\substack{k = 1\\ k \text{ odd}}}^{\infty}{ k^{-2}\,e^{-ik^2/(16h)} } \]
and all the rest of definitions and the range of validity for $h$ are the same as in Proposition~\ref{PropositionFromPart1}.
\end{prop}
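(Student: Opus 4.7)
My plan mirrors the strategy outlined at the beginning of Section~\ref{Section2}: first obtain the expansion at the exceptional rational $t_{1,2}$ and then propagate it to every rational with $q\equiv 2\pmod 4$ by a modular change of variables, in exact analogy with the route that leads to Proposition~\ref{PropositionFromPart1}.

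For the base case $p=1$, $q=2$, I would start from the identity
\[
\phi(t_{1,2}+h)-\phi(t_{1,2}) \;=\; \tfrac12\,\phi(4h) - \phi(h),
\]
which follows directly from \eqref{RiemannFunction} after inserting the phase $e^{-i\pi k^2}=(-1)^k$ and splitting the $k$-sum according to parity: the even indices $k=2m$ reassemble into $\tfrac12\,\phi(4h)$, and the odd indices contribute the remainder $\phi(h)-\tfrac12\phi(4h)$. Plugging Proposition~\ref{PropositionFromPart1} (taken at $p=0$, $q=1$, so $\tilde q=1$) into the two terms on the right, the leading $h^{1/2}$ contributions cancel by design, leaving at order $h^{3/2}$ a linear combination of $Y(b(4h))$ and $Y(b(h))$ with overall weight $-4(1-i)/\sqrt{2\pi}$. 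A rearrangement of the defining series of $Y$ along the parity decomposition $Y(\eta)=\sum_{k\text{ odd}}k^{-2}e^{ik^2/(4\eta)}+\tfrac14 Y(\eta/4)$ converts this combination into $Z$ evaluated at the appropriate argument, yielding $-16\,e_{1,2}(1-i)/\sqrt{2\pi}\,Z(b(h))\,h^{3/2}+O(h^{5/2})$ for a specific eighth root of unity $e_{1,2}$.

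For a general rational $t_{p,q}$ with $q\equiv 2\pmod 4$, the relevant cusp of the theta modular group $\Gamma_\theta$ is $1/2$ rather than $\infty$ (which was the destination of the reduction in Proposition~\ref{PropositionFromPart1}). I would construct, case by case on the residue of $p$, an explicit matrix $\gamma=\left(\begin{smallmatrix}a&b'\\c&d\end{smallmatrix}\right)\in\Gamma_\theta$ with $\gamma(t_{1,2})=t_{p,q}$, and then apply the transformation law $\theta(\gamma\tau)=\varepsilon(\gamma)(c\tau+d)^{1/2}\theta(\tau)$ for Jacobi's theta function, which is the formal derivative of $\phi$ by \eqref{DerivationOfPhi}. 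Integrating this law twice (accounting for the two integrations needed to pass from $\theta$ back to $\phi$) turns the automorphy factor $(c\tau+d)^{1/2}$ evaluated at $\tau=t_{1,2}$ into the prefactor $\tilde q^{3/2}$ in front of $h^{3/2}$, while the M\"obius distortion of $h$ induced by $\gamma$ produces exactly the $b(h)$ in the statement. The eighth-root cocycle $\varepsilon(\gamma)$, composed with $e_{1,2}$ from the base case, is the promised $e_{p,q}$.

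The hard part is not a single estimate but the bookkeeping in the last step: choosing a correct representative $\gamma$ for each residue class of $p$ modulo $4$, tracking the eighth-root phase through the Jacobi cocycle, and verifying that the M\"obius distortion of $h$ yields the precise $b(h)$ of the statement with constants $c_\pm$ in the prescribed range. These are the same obstacles as in the proof of Proposition~\ref{PropositionFromPart1}, plus the extra subtlety that the congruence $q\equiv 2\pmod 4$ forces the use of an irregular cusp of $\Gamma_\theta$. The error bound $O(q^{7/2}h^{5/2})$ would then follow from rescaling the base-case remainder $O(h^{5/2})$ by the weight $\tilde q^{7/2}$ dictated by the two integrations together with the $\tilde q^2 h$ form of the transformed variable.
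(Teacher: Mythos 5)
First, a point of comparison: the paper does not prove this proposition at all --- it is imported verbatim from \cite[Proposition 7.1]{Eceizabarrena2019_Part1}, so there is no in-paper argument to measure yours against. Your outline reproduces the strategy that the paper attributes to that reference (base case at $t_{1,2}$ via the parity identity $\phi(t_{1,2}+h)-\phi(t_{1,2})=\tfrac12\phi(4h)-\phi(h)$, then reduction of a general $q\equiv 2\pmod 4$ to this case through the action of $\Gamma_\theta$), and the leading-order arithmetic you describe is consistent: the $h^{1/2}$ contributions of $\tfrac12\phi(4h)$ and $-\phi(h)$ do cancel, and the resulting coefficient $-4\cdot 4=-16$ matches \eqref{Asymptotic2}.

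However, what you have submitted is a plan rather than a proof, and the steps you defer to ``bookkeeping'' are precisely where the content lies. (i) In the base case, converting the combination $4\,Y(b(4h))-Y(b(h))$ into $4\,Z(b(h))$ via the parity splitting of $Y$ requires $b(4h)=4\,b(h)$; but $b$ is a M\"obius function of $h$ whose constants $c_\pm$ need not agree between the two occurrences, so this identity does not hold exactly, and one must show the discrepancy is absorbable into the $O(h^{5/2})$ remainder (or track the argument of $Z$ precisely). You do not address this. Relatedly, the odd part of $Y(\eta)$ carries the phase $e^{+ik^2/(4\eta)}$ while $Z$ is defined with $e^{-ik^2/(16\,\cdot)}$, so the sign and branch conventions need to be pinned down rather than asserted. (ii) The entire modular reduction --- the choice of $\gamma\in\Gamma_\theta$ per residue class, the double integration of the theta transformation law with control of the errors, the extraction of $\tilde q^{3/2}$, of the cocycle $e_{p,q}$, of the precise $b(h)$ with $c_\pm$ in the stated range, and the bound $O(q^{7/2}h^{5/2})$ --- is described but not carried out; this is the bulk of the proof. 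A minor slip besides: in the parity split of \eqref{RiemannFunction} at $t_{1,2}$, the even indices $k=2m$ contribute $\tfrac14\phi(4h)$, not $\tfrac12\phi(4h)$; the factor $\tfrac12$ only appears after accounting for the weight $(-1)^k$, although the resulting identity you use is correct.
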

We remark that the principal term is of the order of $h^{3/2}$, and not $h^{1/2}$ as in the previous case. The reason for which $\phi$ does not have a tangent in these points is that it follows a spiralling pattern generated by $Z$. Therefore, there will be parts of the curve in every direction arbitrarily close to the point.

\begin{prop}\label{Prop2}
Let $p, q \in \mathbb{N}$ be such that $q>0$, $\operatorname{gcd}(p,q) = 1$ and  $q \equiv 2 \pmod{4} $. For any $\mathbb{V} \in \mathbb{S}^1$, there exist sequences $r_n,s_n \to 0^+$ (when $n \to \infty$) such that 
\[ \lim_{n \to \infty}{  \frac{ \phi(t_{p,q} + r_n) - \phi(t_{p,q})}{\left| \phi(t_{p,q} + r_n) - \phi(t_{p,q}) \right| }} = \mathbb{V} = \lim_{n \to \infty}{  \frac{ \phi(t_{p,q} - s_n) - \phi(t_{p,q})}{\left| \phi(t_{p,q} - s_n) - \phi(t_{p,q}) \right| }} \] 
\end{prop}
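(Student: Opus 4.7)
The plan is to leverage the asymptotic expansion \eqref{Asymptotic2} to reduce the statement to a question about the direction of $Z$ near $0$. For $h > 0$ the leading term of $\phi(t_{p,q}+h)-\phi(t_{p,q})$ is of order $h^{3/2}$ with complex direction governed by $-e_{p,q}(1-i)/\sqrt{2}\cdot Z(b(h))/|Z(b(h))|$; once $|Z|$ is known to stay bounded below, the remainder $O(q^{7/2}h^{5/2})$ becomes negligible relative to the main term, and the normalised quotient tends to that unit complex number. Hence, given $\mathbb{V}\in\mathbb{S}^1$, I would set $\mathbb{W} = -\sqrt{2}\,\mathbb{V}/(e_{p,q}(1-i))$ and look for a sequence $h_n\to 0^+$ realising $Z(h_n)/|Z(h_n)|=\mathbb{W}$; then $r_n = b^{-1}(h_n)$, obtained from the smooth local inversion of $b$ near $0$, yields the desired sequence.

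The analytic core of the argument is the decomposition obtained by isolating the $k=1$ term of the series,
\[
Z(h) = e^{-i/(16h)} + R(h), \qquad |R(h)| \leq \sum_{\substack{k\geq 3\\ k \text{ odd}}}\frac{1}{k^2} = \frac{\pi^2}{8} - 1 < 1.
\]
Because $|R|$ is strictly less than one, $|Z(h)| \geq 2 - \pi^2/8 > 0$ uniformly, and the continuous lift of $\arg Z(h)$ differs from $-1/(16h)\bmod 2\pi$ by an angle less than $\arcsin(\pi^2/8-1) < \pi/2$. As $h\to 0^+$, $-1/(16h)\to -\infty$, forcing the lifted argument of $Z(h)$ to tend to $-\infty$ as well. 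By the intermediate value theorem, $\arg Z(h)$ sweeps every real value infinitely often, so for any prescribed $\mathbb{W}\in\mathbb{S}^1$ there is a sequence $h_n \to 0^+$ with $Z(h_n)/|Z(h_n)|=\mathbb{W}$, which is what is needed.

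For the tangent on the left, write $h = -\sigma$ with $\sigma > 0$ and invoke the same expansion. With the branch $\sqrt{-1}=-i$ one has $h^{3/2}=i\sigma^{3/2}$, so the leading direction becomes $-e_{p,q}(1+i)/\sqrt{2}\cdot Z(b(-\sigma))/|Z(b(-\sigma))|$, and $b(-\sigma)\to 0^-$. Running the same winding argument on the negative axis — where $-1/(16b)\to +\infty$ as $b\to 0^-$ — produces the sequence $s_n \to 0^+$ with the required limiting direction. The main obstacle is rigorously establishing this winding behaviour of $Z$: the strict inequality $\pi^2/8-1<1$ is essential both to keep $Z$ bounded away from the origin and to transfer the infinite winding of the principal term $e^{-i/(16h)}$ to the full series, while checking that the $O(q^{7/2}h^{5/2})$ remainder in \eqref{Asymptotic2} does not spoil the normalisation once $|Z(b(h))|$ is uniformly positive.
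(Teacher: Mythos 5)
Your proposal is correct and follows essentially the same route as the paper: the same asymptotic \eqref{Asymptotic2}, the same isolation of the $k=1$ term of $Z$ with the bound $\sum_{k\geq 3,\,k\text{ odd}}k^{-2}=\pi^2/8-1<1$, and the same conclusion that the infinite winding of $\arg Z$ near $0$ lets every direction of $\mathbb{S}^1$ be realised along a sequence $h_n\to 0^{\pm}$. The only (harmless) difference is in how the sequences are extracted — the paper uses the exact $32\pi\tilde{q}^2$-periodicity of $Z(\tilde{q}^2/t)$ to write them down explicitly, while you use a continuous lift of the argument plus local inversion of $b$, which in fact handles the substitution $b(h)\approx\tilde{q}^2h$ a little more carefully than the paper does.
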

\begin{proof}
Let $h >0$, so 
\begin{equation*}
\begin{split}
\frac{\phi(t_{p,q} + h) - \phi(t_{p,q})}{|\phi(t_{p,q} + h) - \phi(t_{p,q})|} & =  \frac{ -16e_{p,q}\frac{1-i}{\sqrt{2\pi}}\tilde{q}^{3/2}h^{3/2} \left( Z(b(h)) + O(q^2h) \right) }{ -16\frac{1}{\sqrt{\pi}}\tilde{q}^{3/2}h^{3/2} \left| Z(b(h)) + O(q^2h) \right| } \\
& = e_{p,q}\frac{1-i}{\sqrt{2}} \frac{   Z(b(h)) + O(q^2h)   }{ \left|Z(b(h)) + O(q^2h) \right| }.
\end{split}
\end{equation*}  
To take the limit when $h \to 0$, we need to understand  $\lim_{h \to 0}{  Z(b(h)) }$,
which, by continuity of $Z$ and since $b(h) \sim \tilde{q}^2h$ are equivalent infinitesimals, can be written as
\begin{equation*}
\lim_{h \to 0}{ Z(b(h)) } = \lim_{h \to 0}{ Z (\tilde{q}^2 h) } = \lim_{t \to +\infty}{ Z \left( \tilde{q}^2 t^{-1} \right) }.
\end{equation*} 
The function $Z(\tilde{q}^2/\cdot)$ has period $32\pi\tilde{q}^2$. If $B(x,r)$ denotes the ball centered at $x$ and with radius $r$, then
\[ Z(\tilde{q}^2t^{-1}) =   e^{-i\frac{t}{16\tilde{q}^2} } + \sum_{\substack{k = 3 \\ k \text{ odd}}}^{\infty}{ \frac{e^{-i\frac{k^2}{16\tilde{q}^2} t} }{k^2} } \in B\left(e^{-\frac{it}{16\tilde{q}^2} }, 1/4 \right) \]
because $\sum_{\substack{k = 3 \\ k \text{ odd}}}^{\infty}{k^{-2} } = \pi^2/8 - 1\leq 1/4$, so
\[ \operatorname{arg}{ Z(\tilde{q}^2t^{-1})} = -\frac{t}{16\tilde{q}^2} + \eta(t) \qquad \text{ such that } \qquad |\eta(t)| \leq \frac{\pi}{4}. \]
From the fact that $\operatorname{arg}Z(\tilde{q}^2/(-48\pi\tilde{q}^2)) \geq 2\pi$ and $\operatorname{arg}Z(\tilde{q}^2/(16\pi\tilde{q}^2)) \leq 0$, by continuity and periodicity, the function $\operatorname{arg}Z(\tilde{q}^2 t^{-1})$ takes all possible values when $t \in [0,32\pi\tilde{q}^2]$. In other words, for any $\zeta \in [0,2\pi]$, there exists $t_{\zeta} \in [0,32\pi\tilde{q}^2]$ such that $\operatorname{arg}Z(\tilde{q}^2t_{\zeta}^{-1}) = \zeta$. Equivalently, for any $\mathbb{V} \in \mathbb{S}^1$, there exists $t_{\mathbb{V}} \in [0,32\pi\tilde{q}^2]$ such that $Z(\tilde{q}^2t_{\mathbb{V}}^{-1}) / |Z(\tilde{q}^2t_{\mathbb{V}}^{-1})| = \mathbb{V}$. Choose $\tau_n = 32\pi\tilde{q}^2n+t_{\mathbb{V}}$ so that $Z(\tilde{q}^2\tau_n^{-1}) / |Z(\tilde{q}^2\tau_n^{-1})| = \mathbb{V}$ for all $n \in \mathbb{N}$. This way,
\[ \lim_{n \to \infty} \frac{\phi(t_{p,q} + \tau_n^{-1}) - \phi(t_{p,q})}{|\phi(t_{p,q} + \tau_n^{-1}) - \phi(t_{p,q})|}  = e_{p,q} \frac{1-i}{\sqrt{2}} \lim_{n \to \infty}{ \frac{Z(\tilde{q}^2\tau_n^{-1})}{|Z(\tilde{q}^2\tau_n^{-1})|} } = e_{p,q} \frac{1-i}{\sqrt{2}}\mathbb{V}. \]
In the same way, choosing $\sigma_n = 32\pi\tilde{q}^2n - t_{\mathbb{V}}$ for every $n \in \mathbb{N}$, recalling that  $\sqrt{-1} = -i$, we get
\[ \lim_{n \to \infty} \frac{\phi(t_{p,q} - \sigma_n^{-1}) - \phi(t_{p,q})}{|\phi(t_{p,q} - \sigma_n^{-1}) - \phi(t_{p,q})|} =  e_{p,q} \frac{1+i}{\sqrt{2}} \lim_{n \to \infty}{ \frac{Z(-\tilde{q}^2\sigma_n^{-1})}{|Z(-\tilde{q}^2\sigma_n^{-1})|} } =  e_{p,q} \frac{1+i}{\sqrt{2}} \frac{Z(\tilde{q}^2t_{\mathbb{V}}^{-1})}{|Z(\tilde{q}^2t_{\mathbb{V}}^{-1})|} = e_{p,q} \frac{1+i}{\sqrt{2}}\,\mathbb{V}.   \]
\end{proof}

\section{Tangents in irrationals}\label{SectionIrrationals}

\begin{prop}\label{PropIrrat}
Let $\rho \in [0,1] \setminus \mathbb{Q}$. There exists an open set $V \subset \mathbb{S}^1$ such that for any $\mathbb{V} \in V$, there exists a sequence $r_n \to 0$ with $n \to \infty$ such that 
\[  \lim_{n \to \infty}{ \frac{ \phi(t_{\rho} + r_n) - \phi(t_{\rho}) }{ \left| \phi(t_{\rho} + r_n) - \phi(t_{\rho}) \right| } } = \mathbb{V}.  \]
\end{prop}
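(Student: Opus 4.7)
My plan is to approximate $\rho$ by its continued fraction convergents $(p_n/q_n)$ and to use the asymptotic expansion of $\phi$ around each $t_{p_n,q_n}$ from Proposition~\ref{PropositionFromPart1} (and its $q\equiv 2$ analogue in Section~\ref{Section2}) to compute $\phi(t_\rho + r_n) - \phi(t_\rho)$ for a perturbation $r_n = r_n(\mathbb{V})$ calibrated to $\epsilon_n := t_\rho - t_{p_n,q_n}$. The convergents satisfy $|\epsilon_n| < 1/(2\pi q_n q_{n+1})$ and $\operatorname{sgn}\epsilon_n$ alternates. Since $q_n$ takes four residues mod $4$, the eighth root of unity $e_{p_n,q_n}$ takes eight values, and the sign of $\epsilon_n$ takes two, pigeonhole together with compactness lets me pass to a subsequence along which all of these are constant and along which $\tilde q_n^2 |\epsilon_n|$ converges to some $C \geq 0$. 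Write $\omega$ for the fixed value of $e_{p_n,q_n}$, $a_n := |\epsilon_n| \to 0$, and assume without loss of generality that $\epsilon_n < 0$.

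Consider first the case where the fixed residue of $q_n$ mod $4$ lies in $\{0,1,3\}$. For each parameter $\tau > 1$, set $r_n(\tau) := \tau a_n > 0$, so that $r_n \to 0^+$ and $t_\rho + r_n - t_{p_n,q_n} = (\tau-1) a_n > 0$. Decomposing
\[ \phi(t_\rho + r_n) - \phi(t_\rho) = \bigl[\phi(t_{p_n,q_n} + (\tau{-}1) a_n) - \phi(t_{p_n,q_n})\bigr] - \bigl[\phi(t_{p_n,q_n} + \epsilon_n) - \phi(t_{p_n,q_n})\bigr] \]
and applying Proposition~\ref{PropositionFromPart1} (using $\sqrt{-1} = -i$ in the second bracket) yields, at leading order,
\[ \phi(t_\rho + r_n) - \phi(t_\rho) \;\approx\; \frac{\omega\, a_n^{1/2}}{\sqrt{2\pi}\, \tilde q_n^{1/2}}\, \bigl[(\sqrt{\tau-1} - 1) + i(\sqrt{\tau-1} + 1)\bigr]. \]
With $s:=\sqrt{\tau-1}$, the direction of this bracket is $((s-1)+i(s+1))/\sqrt{2s^2+2}$, whose argument decreases monotonically and continuously from $3\pi/4$ (at $\tau \to 1^+$) through $\pi/2$ (at $\tau = 2$) to $\pi/4$ (as $\tau \to \infty$). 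Since the range of validity of the asymptotic forces $\tau \in (1, T)$ for some $T>1$ (with $T$ depending on how fast $q_{n+1}/q_n$ grows), the accessible set of leading directions is an open sub-arc of $(\pi/4, 3\pi/4)$. I take $V$ to be this open sub-arc rotated by $\omega$; for each $\mathbb{V} \in V$ a unique $\tau_\mathbb{V}$ produces the desired direction, and $r_n := \tau_\mathbb{V}\, a_n$ is the required sequence. The case $q_n \equiv 2 \pmod 4$ is treated analogously using the asymptotic of Section~\ref{Section2}, with the spiralling function $Z$ playing the role of $Y$ exactly as in Proposition~\ref{Prop2}, so that the range of directions is automatic.

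The main obstacle is the error control. The subleading term of \eqref{Asymptotic013} has size $\sim \tilde q_n^{3/2} h_n^{3/2}$, a factor $\sim (\tau-1)\, \tilde q_n^2 a_n \sim (\tau-1)\, q_n/q_{n+1}$ smaller than the leading term, and the true remainder $O(q_n^{7/2} h_n^{5/2})$ is a factor $\sim (q_n/q_{n+1})^2$ smaller. Both ratios vanish along a subsequence when $\rho$ has unbounded partial quotients, and for such $\rho$ the leading-order analysis above directly concludes the proof. For badly approximable $\rho$ (bounded partial quotients), however, $\tilde q_n^2 a_n$ is bounded below and these corrections do not vanish; this is the delicate point. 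I would address it by retaining the full two-term asymptotic and passing to a further subsequence on which $Y(b(h_n))$ and $Y(b(\epsilon_n))$ converge (possible by continuity of $Y$, since $b(h_n) \to (\tau-1)C$ and $b(\epsilon_n) \to -C$), then verifying that the resulting limit direction stays a continuous, non-constant function of $\tau$ on some interval $(1, T_C)$ and taking its image---still an open arc---as $V$. The trickiest part of this step is absorbing the $O(q_n^{7/2} h_n^{5/2})$ remainder, which sits at the same order of magnitude as the leading term in this regime and so requires a refined asymptotic or a dedicated estimate to turn accumulation into honest convergence of the direction.
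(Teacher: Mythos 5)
Your overall architecture is sound and genuinely parallels the paper's: continued-fraction convergents, subsequence extraction to freeze the root of unity and the side of approach, a one-parameter family of calibrated perturbations, and continuity in the parameter to sweep out an open arc of directions. Two remarks on where you diverge. First, your pigeonhole on $q_n \bmod 4$ could in principle leave you only with the $q\equiv 2$ class, where the leading term is $h^{3/2}Z(b(h))$ and the two-bracket combination is not really "analogous"; this is avoidable, since the recurrence $q_{n+2}=a_{n+2}q_{n+1}+q_n$ forces infinitely many odd $q_n$, so you may always restrict to odd denominators (this is exactly Remark~\ref{RemarkOddConvergents}). Second, in the regime $q_n^2|\rho-p_n/q_n|\to 0$ your leading-order computation with $r_n=\tau a_n$ does close the argument, and it is arguably more economical than the paper's second rescaling and self-similarity analysis for that case.

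The genuine gap is the one you flag yourself and do not close: when $K=\liminf q_n^2|\rho-p_n/q_n|>0$ (in particular for badly approximable $\rho$), the remainder $O(q^{7/2}h^{5/2})$ in \eqref{Asymptotic013} is a factor $(q^2h)^2\sim K^2$ of the leading term, hence bounded but not vanishing, so neither the two-term truncation nor any further subsequence extraction on $Y(b(\cdot))$ yields \emph{convergence} of the direction --- you only get accumulation, and the accumulation set depends on $\tau$ in a way you cannot control uniformly enough to extract an open arc by continuity. The paper resolves precisely this point not by a refined error estimate but by discarding the truncated expansion altogether: the rescaled asymptotic $H_n$ admits the exact closed form \eqref{DefOfHnClosed}, with no error term, imported from the companion work, and the whole $K>0$ case then rests on the uniform convergence $H_n\to H$ of Lemma~\ref{LemmaUniformConvergence} (proved in the appendix via H\"older continuity of $\phi$ and elementary bounds on $\beta_n$). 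There is also a subcase you would still have to handle afterwards, namely $H(K/2\pi)=0$, where the baseline approximation degenerates and one must compare two auxiliary approximation points against each other. Without the exact formula (or an equivalent dedicated estimate you would need to prove), your plan cannot be completed in the $K>0$ regime, so as written the proof is incomplete.
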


In the case of an irrational $\rho \in [0,1]$, we have no asymptotics to describe the behaviour of the curve around it. To overcome this difficulty, we will find convenient rational approximations $p/q$ and use the asymptotic behaviour $\phi(t_{p,q} + h) - \phi(t_{p,q})$  \eqref{Asymptotic013} choosing $h = t_{\rho} - t_{p,q} = (\rho - p/q)/2\pi$. However, recall that the asymptotic behaviour is precise only when $h \to 0$.

Let $\rho_n = p_n/q_n$, $n \in \mathbb{N}$ be the approximations given by the continued fraction of $\rho$, so that $ | \rho - \rho_n | <  q_n^{-2}$ (see, for instance, \cite{Khinchin1964}). In order to work with a more precise measurement of this error, define $K_n = K_n(\rho)$  by
\[   | \rho - \rho_n | = \frac{K_n}{q_n^2}\,, \qquad \qquad 0 < K_n < 1, \qquad \text{ for all } n \in \mathbb{N}. \]
These coefficients are in the correct scale in the asymptotic at $t_{p,q}$. Indeed, following \cite[Proposition 6.1]{Eceizabarrena2019_Part1}, if $q \equiv 0,1,3 \pmod{4}$, or rescaling \eqref{Asymptotic013} with $h = s/\tilde{q}^2$, the asymptotic becomes
\begin{equation}\label{Asymptotic013Rescaled}
\begin{split}
\phi(t_{p,q} + s/\tilde{q}^2 ) - \phi(t_{p,q}) & = e_{p,q}\, \frac{1+i}{\sqrt{2\pi}}\,\frac{s^{1/2}}{\tilde{q}^{3/2}} - 4e_{p,q} \frac{1-i}{\sqrt{2\pi}}\,Y(b(s/\tilde{q}^2))\, \frac{s^{3/2}}{\tilde{q}^{3/2}} + O\left( q^{-3/2}s^{5/2} \right) \\
& = \frac{e_{p,q}}{\tilde{q}^{3/2}}\,\left( \frac{1+i}{\sqrt{2\pi}}s^{1/2} - 4 \frac{1-i}{\sqrt{2\pi}}\,Y(b(s/\tilde{q}^2))\,s^{3/2} + O\left( s^{5/2} \right) \right).
\end{split}
\end{equation} 
Since $n \to \infty$, $q$ is no longer fixed and it has to be taken care of. According to the definition \eqref{DefOfB}, $b = b_q$ depends on $q$, and 
\[  b_q\left( s/\tilde{q}^2 \right) = \beta_q(s) =  \frac{s}{1 + 4\pi \frac{c_\pm}{\tilde{q}}s}. \]
The above depends only on $ c_\pm/\tilde{q}$, bounded by $1 \leq |c_\pm/\tilde{q}| \leq 4$. Also up to rescaling the variable and the image, the resulting asymptotic is very similar to that in $0$ (see \cite[Proposition 4.1]{Eceizabarrena2019_Part1}, or \eqref{Asymptotic013} with $q=1$). 
In other words, $\phi$ does around $t_{p,q}$ essentially the same as around 0 at a smaller scale.  

\begin{rem}
If the rescaled asymptotic \eqref{Asymptotic013Rescaled}
is centered in the approximations $p_n/q_n$, 
for simplicity we may write
\begin{equation}\label{DefOfBetan}
 \beta_n(s) = \frac{s}{1 + 4\pi \frac{c_n}{\tilde{q}_n}s} 
\end{equation}
instead of $\beta_{q_n}(s)$ to express that the dependence is now on $n\to\infty$.
\end{rem}

\begin{rem}\label{RemarkOddConvergents}
Infinitely many approximations $p_n/q_n$ coming from the continued fraction of $\rho$ are such that $q_n$ is odd. This is easy to check from the recurrence relation
\[ q_{n+2} = a_{n+2}q_{n+1} + q_n, \qquad \forall n \in \mathbb{N}, \]
where $\rho = [a_0; a_1,a_2,a_3,\ldots]$ is the continued fraction (see \cite{Khinchin1964}).  In this case, $\tilde{q} = q$, which simplifies the notation and allows to work only with \eqref{Asymptotic013Rescaled}. Hence, from now on, we work with the subsequence of these approximations, which for simplicity we rename again simply as $p_n/q_n$.
\end{rem} 

According to the above, the position of $t_\rho$ in the rescaled asymptotic \eqref{Asymptotic013Rescaled} depends on the sequence $(K_n)_{n\in\mathbb{N}}$. 
However, it is easy to build examples in which  $\lim_{n \to \infty}K_n$ does not exist. In view of this, after proceeding as in Remark~\ref{RemarkOddConvergents}, call 
\[ K = \liminf_{n \to \infty}{K_n}, \]
and extract a second subsequence such that, after renaming it again as the original sequence, $\lim_{n \to \infty}K_n = K$. Then, our approach depends very much on the value of $K \in [0,1]$.
\begin{itemize}
	\item If $K>0$, we first take a third subsequence to manage the effect of $c_n/q_n$ so that the asymptotics in \eqref{Asymptotic013Rescaled}, which depend on $c_n/q_n$, tend to some \textit{limit asymptotic}. The irrational point tends to stabilise somewhere far from the origin in this limit asymptotic. In this stable setting, we will be able to conclude.  
	\item If $K=0$, the irrational point tends to the origin in the rescaled asymptotic \eqref{Asymptotic013Rescaled}. We need to rescale a second time to exploit the selfsimilar properties of $F$ described in \cite[Proposition 6.1]{Eceizabarrena2019_Part1}, which show that the better copies of itself are generated the closer we get to the origin. Once we detect the copy in which $t_{\rho}$ is, we will be able to conclude and deduce that no tangent can exist.
\end{itemize}

Let us rewrite the asymptotic \eqref{Asymptotic013Rescaled} at each of the approximations $p_n/q_n$ evaluated in $K_n/2\pi$. The main parameter is now $n \in \mathbb{N}$, so  writing $e_{p_n,q_n} = e_n$, we have
\begin{equation}\label{Asymptotic013RescaledApproximations}
\phi(t_{\rho} ) - \phi(t_{p_n,q_n}) =   \frac{1+i}{\sqrt{2\pi}}\, \frac{e_n}{ q_n^{3/2} }\left(  \sqrt{\frac{K_n}{2\pi}} + 4i\, Y(\beta_n\left(K_n/2\pi\right))\left( \frac{K_n}{2\pi} \right)^{3/2} + O(K_n^{5/2})    \right).
\end{equation}
In the case $K=0$, we recover from \cite[Proposition 6.1]{Eceizabarrena2019_Part1} the self-similar version of \eqref{Asymptotic013RescaledApproximations}, which is
\begin{equation}\label{Asymptotic013Selfsimilar}
\phi\left(t_{\rho} \right) - \phi(t_{p_n,q_n}) = \frac32 \frac{1+i}{\sqrt{2\pi}} \frac{e_n}{q_n^{3/2}} \left[   \sqrt{\frac{K_n}{2\pi}} + \frac{8\pi^2}{3}i\left( \frac16 - \frac{i}{2\pi}\frac{c_n}{q_n} - 2\phi\left( \frac{-1}{16\pi^2\beta_n(\frac{K_n}{2\pi})}\right)\right)\left( \frac{K_n}{2\pi} \right)^{\frac32}   + O\left( K_n^{\frac52} \right)    \right]
\end{equation}
for every $n \in \mathbb{N}$. 
In both cases \eqref{Asymptotic013RescaledApproximations} and \eqref{Asymptotic013Selfsimilar}, we name the rescaled asymptotic as 
\begin{equation}\label{DefOfHn}
\begin{split}
H_n(s) & =  \sqrt{s} + 4i \, Y(\beta_n(s)) \,s^{3/2} + O(s^{5/2})  \\
& = \frac32 \left( \sqrt{s} + \frac{8\pi^2}{3}i\left( \frac16 - \frac{i}{2\pi}\frac{c_n}{q_n} - 2\phi\left( \frac{-1}{16\pi^2\beta_n(s)} \right) \right)s^{3/2} + O\left( s^{5/2} \right) \right).
\end{split}
\end{equation}
so that for every $n \in \mathbb{N}$, both can be read as
\begin{equation}\label{PhiInTermsOfH}
\phi\left(t_{\rho} \right) - \phi(t_{p_n,q_n}) =  \frac{1+i}{\sqrt{2\pi}} \frac{e_n}{q_n^{3/2}} H_n\left(\frac{K_n}{2\pi}\right). 
\end{equation} 

\begin{rem}
The approximations by continued fractions approach $\rho$ alternately from the right and from the left, but after having extracted subsequences, this may no longer be true. However, either the approximations from the right or those from the left must be infinitely many. We extract this infinite subsequence so that every approximation is on the same side of $\rho$. Moreover, we may assume they are on the left, so $\rho - \rho_n >0$, and therefore work with $s>0$ and with $c_n = (c_\pm)_n = (c_+)_n>0$ in \eqref{DefOfHn}. Indeed, if this was not the case, the asymptotics \eqref{DefOfHn} are also valid with $s = -|s| < 0$ recalling that $\sqrt{-1}=-i$, and the proof is analogous. 

Before splitting the analysis into the cases $K=0$ and $K>0$, let us sum up the subsequences we have extracted: 
\begin{itemize}
	\item $q_n$ is odd, so $\tilde{q}_n = q_n$ and the corresponding asymptotic behaviour is \eqref{PhiInTermsOfH}.
	\item $\lim_{n \to \infty}{ K_n } = K \in [0,1],$
	\item All $p_n/q_n$ approach $\rho$ from the left, so we work with $s>0$ in \eqref{DefOfHn}.
\end{itemize}
\end{rem}

%
%

\subsection{$\boldsymbol{K>0}$}

In the setting of the asymptotic \eqref{DefOfHn}, the position of $t_{\rho}$ tends to stabilise somewhere far from the origin. However, there are two drawbacks to be solved:
\begin{enumerate}[(i)]
	\item The asymptotic \eqref{DefOfHn} depends on $n\in\mathbb{N}$.
	\item Since $s = K_n/2\pi \to K/2\pi >0$, we lack control of the error term in \eqref{DefOfHn}.
\end{enumerate}

To solve the second issue, we recover a closed expression for \eqref{DefOfHn} from \cite[Section 6]{Eceizabarrena2019_Part1},
\begin{equation}\label{DefOfHnClosed}
H_n(s) = \sqrt{\pi}\frac{1-i}{\sqrt{2}} \left( \frac{\phi(\beta_n(s))}{(1-4\pi\frac{c_n}{q_n}\beta_n(s))^{3/2}} - 6\pi\frac{c_n}{q_n}\int_{0}^{\beta_n(s)}{\frac{\phi(r)}{(1 - 4\pi \frac{c_n}{q_n}r)^{5/2}}\,dr  } \right),  
\end{equation} 
where there is no longer an error term. On the other hand, regarding issue (i), let 
\begin{equation}\label{DefOfA}
a  =\liminf_{n \to \infty}{ \frac{c_n}{q_n}} \in [1,4]
\end{equation} 
 and take a subsequence of the approximations which, after relabelling, satisfies $\lim_{n \to \infty}{c_n/q_n} = a$. Since \eqref{DefOfHnClosed} depends only on $c_n/q_n$ (because so does $\beta_n$), we expect it to  converge to 
\begin{equation}\label{DefOfH}
\begin{split}
H(s) & = \sqrt{\pi}\frac{1-i}{\sqrt{2}} \left( \frac{\phi(\beta(s))}{(1-4\pi a \beta(s))^{3/2}} - 6\pi a \int_{0}^{\beta(s)}{\frac{\phi(r)}{(1 - 4\pi a r)^{5/2}}\,dr  } \right) \\
& = \sqrt{s} + 4i\, Y(\beta\left(s\right)) \, s ^{3/2} + O(s^{5/2}),
\end{split}
\end{equation} 
where
\begin{equation}\label{DefOfBeta}
\beta(s) = \lim_{n \to \infty}\beta_n(s) = \frac{s}{1 + 4\pi a s}. 
\end{equation}
Indeed, that is precisely what happens.
\begin{lem}\label{LemmaUniformConvergence}
Let $M >0$. The sequence of functions $H_n$ converges uniformly to $H$ in $[0,M]$. In other words,
\[ \lim_{n \to \infty}{\lVert H_n - H \rVert_{L^{\infty}([0,M])}} = 0. \]
\end{lem}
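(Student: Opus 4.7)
The plan is to exploit the closed-form representation \eqref{DefOfHnClosed} of $H_n$ and compare it term by term with \eqref{DefOfH}. The only $n$-dependence on the right-hand side of \eqref{DefOfHnClosed} is through the parameter $c_n/q_n$, which by assumption tends to $a \in [1,4]$, and the crucial observation is that on $[0,M]$ the integrand never comes close to its singularity: from $\beta_n(s) = s/(1+4\pi (c_n/q_n)s)$ and $1 \leq c_n/q_n \leq 4$ one gets
\[ 1 - 4\pi\tfrac{c_n}{q_n}\,r \;\geq\; 1-4\pi\tfrac{c_n}{q_n}\beta_n(s) \;=\; \frac{1}{1+4\pi\frac{c_n}{q_n}s} \;\geq\; \frac{1}{1+16\pi M}, \qquad r\in[0,\beta_n(s)],\ s\in[0,M], \]
uniformly in $n$, and the analogous bound holds with $a$ in place of $c_n/q_n$. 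In particular, $\beta_n([0,M]), \beta([0,M])\subset[0,M]$, so only the values of $\phi$ on $[0,M]$ are relevant.

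First I would verify $\beta_n \to \beta$ uniformly on $[0,M]$ via the identity
\[ \beta_n(s) - \beta(s) \;=\; \frac{4\pi s^2 (a - c_n/q_n)}{(1+4\pi(c_n/q_n)s)(1+4\pi a s)}, \]
which gives $\lVert\beta_n-\beta\rVert_{L^\infty[0,M]} \leq 4\pi M^2 |a-c_n/q_n|\to 0$. Since $\phi$ is continuous, hence uniformly continuous, on $[0,M]$, this yields both $\phi\circ\beta_n \to \phi\circ\beta$ and $(1-4\pi(c_n/q_n)\beta_n(\cdot))^{-3/2} \to (1-4\pi a\beta(\cdot))^{-3/2}$ uniformly on $[0,M]$, so the first summand inside the parentheses of \eqref{DefOfHnClosed} converges uniformly to that of \eqref{DefOfH}.

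For the integral summand I would split
\[ \int_0^{\beta_n(s)}\frac{\phi(r)\,dr}{(1-4\pi\frac{c_n}{q_n}r)^{5/2}} - \int_0^{\beta(s)}\frac{\phi(r)\,dr}{(1-4\pi a r)^{5/2}} = \int_0^{\beta(s)}\!\phi(r)\!\left[\tfrac{1}{(1-4\pi\frac{c_n}{q_n}r)^{5/2}} - \tfrac{1}{(1-4\pi a r)^{5/2}}\right]dr + \int_{\beta(s)}^{\beta_n(s)}\!\frac{\phi(r)\,dr}{(1-4\pi\frac{c_n}{q_n}r)^{5/2}}. \]
The first integral is bounded by $M\,\lVert\phi\rVert_{L^\infty[0,M]}$ times the supremum on $[0,M]$ of the bracketed difference, which is $o(1)$ because both kernels are continuous in $(r,c_n/q_n)$ on the compact set where they are bounded away from zero and $c_n/q_n\to a$. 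The second integral is bounded by $\lVert\phi\rVert_{L^\infty[0,M]}(1+16\pi M)^{5/2}\lVert\beta_n-\beta\rVert_{L^\infty[0,M]}$, which is $o(1)$ uniformly in $s$ by the previous step. Multiplying by the bounded factor $c_n/q_n\in[1,4]$ preserves the uniform convergence, and combining both summands proves $\lVert H_n-H\rVert_{L^\infty[0,M]}\to 0$.

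The only subtlety is the apparent singularity of the integrand at $r=1/(4\pi(c_n/q_n))$; once the uniform lower bound $1-4\pi(c_n/q_n)r \geq (1+16\pi M)^{-1}$ on $[0,\beta_n(s)]$ is observed, the rest is a routine dominated-convergence-style estimate based on uniform continuity of $\phi$ on $[0,M]$.
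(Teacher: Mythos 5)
Your proposal is correct and follows essentially the same route as the paper's appendix proof: both start from the closed forms \eqref{DefOfHnClosed} and \eqref{DefOfH}, note that the denominators stay in $[(1+16\pi M)^{-1},1]$ on the relevant ranges, get $\lVert \beta_n-\beta\rVert_{L^\infty([0,M])}\lesssim |a-c_n/q_n|$ from the same algebraic identity, and then compare the two summands term by term, splitting the integral into a kernel-difference part and a domain-difference part. The one substantive difference is how you pass from $\beta_n\to\beta$ to $\phi\circ\beta_n\to\phi\circ\beta$ and to the convergence of the kernels: you invoke uniform continuity on compact sets, whereas the paper uses Duistermaat's global $C^{1/2}$ estimate for $\phi$ and explicit polynomial expansions of the denominators, which yields the quantitative rate $|a-c_n/q_n|^{1/2}+|a-c_n/q_n|$. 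Your version is more elementary and suffices for the lemma as stated, at the cost of the explicit rate. Two small points to tidy up: (i) the integral summands carry different prefactors $c_n/q_n$ and $a$, so besides $\frac{c_n}{q_n}(I_n-I)$ you must also absorb the cross term $\left(\frac{c_n}{q_n}-a\right)\int_0^{\beta(s)}\phi(r)(1-4\pi a r)^{-5/2}\,dr$, which is $o(1)$ uniformly because the integral is uniformly bounded (this is the paper's first sub-term in its bound for $B$); (ii) in your kernel-difference integral the factor $(1-4\pi\frac{c_n}{q_n}r)^{-5/2}$ is evaluated for $r$ up to $\beta(s)$, which can slightly exceed $\beta_n(s)$ when $c_n/q_n>a$, so the uniform lower bound on that denominator requires $4\pi M\,|c_n/q_n-a|<1$, i.e.\ $n$ large enough --- harmless for the limit, but worth a word (the paper sidesteps this by always integrating the $c_n/q_n$-kernel over $[0,\beta_n(s)]$ and the $a$-kernel over the leftover interval).
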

\begin{proof}
See Appendix~\ref{Appendix}.
\end{proof}

\begin{figure}[h]
\includegraphics[scale=0.82]{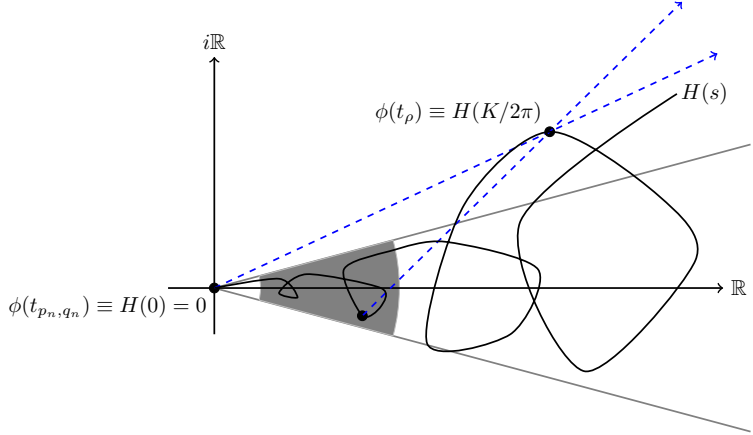}
\caption{Schematic representation of the curve $H(s)$. According to Lemma~\ref{LemmaUniformConvergence}, the $q_n^{3/2}$-rescaled neighbourhood of $\phi(t_{\rho})$ converges to this situation when $n \to \infty$. As approximations to $\phi(t_\rho)$, we use $\phi(t_{p_n,q_n})$ (the origin in the picture), but we might also use any other point between $\phi(t_{p_n,q_n})$ and $\phi(t_{\rho})$, for instance one lying in the shaded region. Each approximation leads to different tangents, dashed in blue, and consequently, no tangent to $\phi(t_\rho)$ can exist.}
\label{GeneralSituation}
\end{figure}

To prove Proposition~\ref{PropIrrat}, we use the characterisation with limits in Lemma~\ref{DefOfTangentsLimits} with the approximations $p_n/q_n$. The idea is that, apart from $\phi(t_{p_n,q_n})$, we use alternative approximations lying between $t_{\rho}$ and $t_{p_n,q_n}$, each of them leading to a different tangent (see Figure~\ref{GeneralSituation}). 

Following Lemma~\ref{DefOfTangentsLimits} and \eqref{PhiInTermsOfH}, we compute
\begin{equation}\label{ConditionTangent}
\lim_{n \to \infty}{ \frac{ \phi(t_{p_n,q_n}) - \phi(t_{\rho}) }{|\phi(t_{p_n,q_n}) - \phi(t_{\rho})|} } =   - \frac{1+i}{\sqrt{2}} \lim_{n \to \infty}{ e_n \, \frac{  H_n(K_n/2\pi)}{\left|   H_n(K_n/2\pi) \right|} } .
\end{equation} 
From Lemma~\ref{LemmaUniformConvergence} and $\lim_{n\to\infty}{K_n} = K > 0$, we deduce that $\lim_{n\to\infty}{H_n(K_n/2\pi)} = H(K/2\pi)$. Hence, we need different approaches if $H(K/2\pi) \neq 0$ or if $H(K/2\pi) = 0$, which might happen.

\subsubsection{$\boldsymbol{H(K/2\pi) \neq 0}$}
We write the limit \eqref{ConditionTangent} as
\[  - \frac{1+i}{\sqrt{2}}\, \frac{  H(K/2\pi)}{\left|   H(K/2\pi) \right|} \lim_{n \to \infty}{ e_n  }. \]
This limit may or may not exist, but since we want to show that the curve approaches $\phi(t_{\rho})$ from any direction in an open set of $\mathbb{S}^1$, taking into account that $e_n^8=1$ for all $n \in \mathbb{N}$, we take a subsequence of $p_n/q_n$ such that, after relabelling, $e_n = e \in \mathbb{C}$ is constant. Thus, the limit and hence the candidate to be the tangent is
\begin{equation}\label{TangentFromApproximation1}
- \frac{1+i}{\sqrt{2}}\,e \, \frac{  H(K/2\pi)}{\left|   H(K/2\pi) \right|} . 
\end{equation}
Let us work with other approximations by letting $Q \in \mathbb{R}$ be such that 
\[ 0 < Q < \frac{K}{2} < K_n \]
for big enough $n \in \mathbb{N}$. Then, the tangent can also be computed with the approximations
$\phi(t_{p_n,q_n} + Q/(2\pi q_n^2))$, which by the choice of $Q$ are always between $\phi(t_{p_n,q_n})$ and $\phi(t_{\rho})$ in parameter. Consequently, by \eqref{PhiInTermsOfH}, the limit for the tangent as in Lemma~\ref{DefOfTangentsLimits} is
\begin{equation}\label{TangentFromApproximation2}
\begin{split}
& \lim_{n \to \infty}{ \frac{ \phi(t_{p_n,q_n}+\frac{Q}{2\pi q_n^2}) - \phi(t_{\rho}) }{|\phi(t_{p_n,q_n} + \frac{Q}{2\pi q_n^2}) - \phi(t_{\rho})|} }  \\
& \qquad =  \lim_{n \to \infty}{ \frac{ \phi(t_{p_n,q_n}+\frac{Q}{2\pi q_n^2}) - \phi(t_{p_n,q_n}) + \phi(t_{p_n,q_n}) -  \phi(t_{\rho}) }{ \left|\phi(t_{p_n,q_n} + \frac{Q}{2\pi q_n^2}) - \phi(t_{p_n,q_n}) + \phi(t_{p_n,q_n}) - \phi(t_{\rho}) \right|} }  \\
& \qquad  =  \lim_{n \to \infty}{ \frac{ \frac{1+i}{\sqrt{2\pi}} \frac{e}{q_n^{3/2}} H_n\left(\frac{Q}{2\pi}\right)  -  \frac{1+i}{\sqrt{2\pi}} \frac{e}{q_n^{3/2}} H_n\left(\frac{K_n}{2\pi}\right) }{ \left| \frac{1+i}{\sqrt{2\pi}} \frac{e}{q_n^{3/2}} H_n\left(\frac{Q}{2\pi}\right)  -  \frac{1+i}{\sqrt{2\pi}} \frac{e}{q_n^{3/2}} H_n\left(\frac{K_n}{2\pi}\right) \right| } } \\
& \qquad = \frac{1+i}{\sqrt{2}} \, e  \, \frac{  H\left(Q/2\pi\right)  -   H\left(K/2\pi\right) }{ \left|   H\left(Q/2\pi\right)  -   H\left(K/2\pi\right) \right| } .
\end{split}
\end{equation}  
Since $H(K/2\pi) \neq 0$ and $\lim_{s \to 0}H(s) = 0$, by continuity of $H$ we can choose $0 < Q < K/2$ such that $H(K/2\pi) \neq H(Q/2\pi) \neq 0$. According to Lemma~\ref{DefOfTangentsLimits}, a tangent will definitely not exist if 
\begin{equation}\label{ConditionFromTwoApproximations}
 \frac{  H(K/2\pi)}{\left|   H(K/2\pi) \right|} \neq  \frac{  H\left(K/2\pi\right)  -   H\left(Q/2\pi\right) }{ \left|   H\left(K/2\pi\right)  -   H\left(Q/2\pi\right) \right| } ,
\end{equation} 
and in particular if $H(Q/2\pi) \notin H(K/2\pi)\mathbb{R}$. 

With the definition of $Y$ \eqref{DefOfB} and the asymptotic \eqref{DefOfH} in mind, define the sequences $s_m$ and $\tilde{s}_m$ for $m \in \mathbb{N}$ as
\begin{equation}\label{DefPointsForGoodSum}
\frac{1}{4\beta(s)} = 2\pi m  \Leftrightarrow  s = s_m = \frac{1}{2\pi}\frac{1}{4m - 2a}, \quad \text{ and } \quad \frac{1}{4\beta(s)} = (2 m + 1 )\pi  \Leftrightarrow  s = \tilde{s}_m = \frac{1}{4\pi}\frac{1}{2m + 1 - a} .  
\end{equation}  
Then, 
\[ Y(\beta(s_m)) = \sum_{k=1}^\infty \frac{e^{2\pi i m k^2}}{k^2} = \sum_{k=1}^\infty \frac{1}{k^2} = \frac{\pi^2}{6} > 0 \]
and
\[ Y(\beta(\tilde{s}_m)) = \sum_{k=1}^\infty \frac{e^{(2m+1)\pi i k^2}}{k^2} = \sum_{k=1}^\infty \frac{e^{i\pi k^2}}{k^2} = \sum_{k=1}^\infty \frac{(-1)^k}{k^2} = -\frac{\pi^2}{12} < 0. \]
Since  $\lim_{m \to \infty}s_m = 0 = \lim_{m\to\infty}\tilde{s}_m$, the error terms $O(s_m^{5/2})$ and $O(\tilde{s}_m^{5/2})$  in \eqref{DefOfH} are negligible when $m \to \infty$. Also, the real part of $\sqrt{s_m} + 4iY(\beta(s_m))\,s_m^{3/2} $ is $\sqrt{s_m}>0$, and the imaginary part is $4Y(\beta(s_m))\,s_m^{3/2}>0$, so for big enough $m \in \mathbb{N}$, $H(s_m)$ is in the first quadrant of the complex plane. On the other hand, the real part of $\sqrt{\tilde{s}_m} + 4iY(\beta(\tilde{s}_m))\,\tilde{s}_m^{3/2}$ is $\sqrt{\tilde{s}_m}>0$ and the imaginary part is $4Y(\beta(\tilde{s}_m))\,\tilde{s}_m^{3/2}<0$, so $H(\tilde{s}_m)$ is in the fourth quadrant of $\mathbb{C}$ for big enough $m \in \mathbb{N}$. 

Therefore, if $H(K/2\pi)$ is either in the first or the third quadrant, we can choose $Q = 2\pi\tilde{s}_m$ for a big enough $m \in \mathbb{N}$ so that $H(Q/2\pi)$ is in the fourth quadrant, and thus $H(Q/2\pi) \notin H(K/2\pi)\mathbb{R}$. On the other hand, in case $H(K/2\pi)$ is in the second or the fourth quadrant, choose $Q=2\pi s_m$ for a sufficiently large $m \in \mathbb{N}$ such that $H(Q/2\pi)$ is in the first quadrant, and thus $H(Q/2\pi) \notin H(K/2\pi)\mathbb{R}$.

Moreover, define $f(x) = \operatorname{arg}(H(K/2\pi) - H(x/2\pi))$ for $0 \leq x \leq Q$, which is a continuous function. Then, $f$ takes every value between the two extremal arguments. In other words, there exists an open set $V \subset \mathbb{S}^1$ such that for every $\mathbb{V} \in V$, there exists $Q_{\mathbb{V}} \in (0,Q)$ such that 
\[ \frac{H(K/2\pi)-H(Q_{\mathbb{V}}/2\pi)}{|H(K/2\pi)-H(Q_{\mathbb{V}}/2\pi)|} = \mathbb{V}. \]
Consequently, the method above shows that for every $V \in \mathbb{V}$ there is a sequence of approximations $\phi(t_{p_n,q_n}+Q_{\mathbb{V}}/(2\pi q_n^2))$ such that 
\[ \lim_{n \to \infty}{ \frac{ \phi(t_{p_n,q_n}+\frac{Q_{\mathbb{V}}}{2\pi q_n^2}) - \phi(t_{\rho}) }{|\phi(t_{p_n,q_n} + \frac{Q_{\mathbb{V}}}{2\pi q_n^2}) - \phi(t_{\rho})|} } = -\frac{1+i}{\sqrt{2}}\, e\, \mathbb{V}.  \]

\subsubsection{$\boldsymbol{H(K/2\pi) = 0}$}
Like before, take the subsequence of the approximations $p_n/q_n$ such that $e_n = e$ is constant. In this case, the limit is not \eqref{TangentFromApproximation1}, but on the other hand, from \eqref{PhiInTermsOfH} we get 
\begin{equation}\label{TheLimitIsZero}
\frac{\sqrt{2\pi}}{1+i} \, e^{-1} \,\lim_{n \to \infty}{  q_n^{3/2} \left( \phi(t_{\rho}) - \phi(t_{p_n,q_n}) \right) = \lim_{n \to \infty}H_n(K_n/2\pi) = 0.  } 
\end{equation}  
Choose $0 < Q < K/2$ as in the previous case so that 
\begin{equation*}
\begin{split}
&q_n^{3/2} \left( \phi(t_{p_n,q_n} + \textstyle{\frac{Q}{2\pi q_n^2}}) - \phi(t_{p_n,q_n}) \right) \\
&  =    q_n^{3/2} \left( \phi(t_{p_n,q_n} + \textstyle{\frac{Q}{2\pi q_n^2}}) - \phi(t_{\rho}) \right) +   q_n^{3/2} \left( \phi(t_{\rho}) - \phi(t_{p_n,q_n}) \right)
\end{split}
\end{equation*} 
and hence, by \eqref{TheLimitIsZero} we get
\begin{equation*}\label{LimitEquality}
\lim_{n \to \infty}{    q_n^{3/2} \left( \phi(t_{p_n,q_n} + \textstyle{\frac{Q}{2\pi q_n^2}}) - \phi(t_{\rho}) \right) } = \lim_{n \to \infty}{    q_n^{3/2} \left( \phi(t_{p_n,q_n} + \textstyle{\frac{Q}{2\pi q_n^2}}) - \phi(t_{p_n,q_n}). \right) }
\end{equation*}  
Then, if the previous limit does not vanish, the limit corresponding to these new approximations is
\begin{equation}\label{TangentForEveryQ}
\begin{split}
& \lim_{n \to \infty}{ \frac{ \phi(t_{p_n,q_n} + \frac{Q}{2\pi q_n^2}) - \phi(t_{\rho}) }{ \left| \phi(t_{p_n,q_n} + \frac{Q}{2\pi q_n^2}) - \phi(t_{\rho})  \right| } }  = \lim_{n \to \infty}{ \frac{ \phi(t_{p_n,q_n} + \frac{Q}{2\pi q_n^2}) - \phi(t_{p_n,q_n}) }{ \left| \phi(t_{p_n,q_n} + \frac{Q}{2\pi q_n^2}) - \phi(t_{p_n,q_n})  \right| } }  \\
& \qquad  = -\frac{1+i}{\sqrt{2}} e \lim_{n \to \infty}{ \frac{ H_n(Q/2\pi) }{ \left| H_n(Q/2\pi) \right| } } = -\frac{1+i}{\sqrt{2}} e \frac{ H(Q/2\pi) }{ \left| H(Q/2\pi) \right| }.
\end{split}
\end{equation} 
Then, since according to Lemma~\ref{DefOfTangentsLimits} the existence of a tangent requires that the limit above is the same for all approximations, it is enough to find two values of $Q$ giving different values for it. For that, using the sequences $s_m,\tilde{s}_m$ in \eqref{DefPointsForGoodSum}, choose $Q_1 = 2\pi s_m$ for big enough $m \in \mathbb{N}$ so that $H(Q_1/2\pi)$ is in the first quadrant, and $Q_2 = 2\pi \tilde{s}_m$ so that $H(Q_2/2\pi)$ is in the fourth quadrant. Then, 
\[ \frac{H(Q_1/2\pi)}{|H(Q_1/2\pi)|} \neq \frac{H(Q_2/2\pi)}{|H(Q_2/2\pi)|}. \]
 We can also make $0 < Q_1 < Q_2$.

Moreover, defining $f(x) = \operatorname{arg}H(x/2\pi)$, because $f(Q_2) < 0 < f(Q_1)$ and by continuity of $f$, there exists an open set $V \subset \mathbb{S}^1$ (corresponding to arguments $(f(Q_2),f(Q_1)) \subset [-\pi,\pi))$ such that for any $\mathbb{V} \in V$, there exists $Q_{\mathbb{V}} \in (Q_1,Q_2)$ such that 
\[ \frac{ H(Q_{\mathbb{V}}/2\pi) }{ \left| H(Q_{\mathbb{V}}/2\pi) \right| } = \mathbb{V}. \]
Consequently, for each $\mathbb{V} \in V$, there is a sequence of approximations $\phi(t_{p_n,q_n} + Q_{\mathbb{V}}/(2\pi q_n^2))$ so that 
\[  \lim_{n \to \infty}{ \frac{ \phi(t_{p_n,q_n}+\frac{Q_{\mathbb{V}}}{2\pi q_n^2}) - \phi(t_{\rho}) }{|\phi(t_{p_n,q_n} + \frac{Q_{\mathbb{V}}}{2\pi q_n^2}) - \phi(t_{\rho})|} } = -\frac{1+i}{\sqrt{2}}\, e \, \mathbb{V} .\]

\subsection{$\boldsymbol{K=0}$}
In this case, $ \lim_{n \to 0}H_n(K_n/2\pi) = H(0) = 0$.
\begin{figure}[h]
\centering
\includegraphics[width=0.85\linewidth]{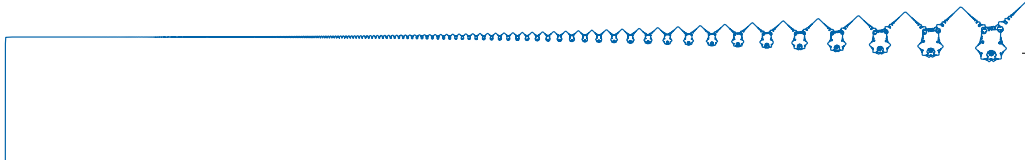}
\caption{\small A picture of the rescaled asymptotic $H(s)$, where the corner represents $H(0)$ and corresponds to $\phi(t_{p_n,q_n})$. When $K=0$, the irrational $\phi(t_\rho)$ approaches the corner. It is evident that at this scale any approximation to $t_\rho$ lying between itself and $t_{\rho_n}$ leads to the same tangent direction.}
\label{FIG_AsymptoticPicture}
\end{figure}
Thanks to the first expression in \eqref{DefOfHn}, we write
\[ \lim_{n \to \infty}{ \frac{H_n(K_n/2\pi)}{(K_n/2\pi)^{1/2}} } = \lim_{n \to \infty}   \frac{ \sqrt{K_n/2\pi} + 4i \, Y(\beta_n(K_n/2\pi)) \left(K_n/2\pi\right)^{3/2} + O(K_n^{5/2})  }{(K_n/2\pi)^{1/2}}    = 1. \]
As a consequence, 
\begin{equation}\label{DirectionCollapsed}
 \lim_{n \to \infty}{ \frac{H_n(K_n/2\pi)}{|H_n(K_n/2\pi)|} } = \lim_{s \to 0}{ \frac{ \frac{H_n(K_n/2\pi)}{(K_n/2\pi)^{1/2}} }{ \frac{|H_n(K_n/2\pi)|}{(K_n/2\pi)^{1/2}}  }   } = 1,
\end{equation}
and the limit in \eqref{ConditionTangent} is $  -  \lim_{n \to \infty}{ (1+i)e_n/\sqrt{2} }$. As before, to get the open set we are looking for, we take a subsequence of the approximations $p_n/q_n$ such that $e_n = e$ is constant for every $n \in \mathbb{N}$. The limit is then $ - e\,(1+i)/\sqrt{2} $.

It is obvious that we cannot proceed as in the case $K>0$, because every approximation taken at the scale of $H_n$ as in \eqref{TangentFromApproximation2} collapses to the same direction as in \eqref{DirectionCollapsed}. In the setting of Figure~\ref{FIG_AsymptoticPicture}, we approach more and more to the origin and \textit{we see nothing} when $n \to \infty$. A way to solve this is using the self-similarity term in \eqref{DefOfHn}, 
\begin{equation}\label{SelfSimilarCreator}
\frac16 - \frac{i}{2\pi}\frac{c_n}{q_n} - 2\phi\left( \frac{-1}{16\pi^2\beta_n(s)} \right).
\end{equation} 
Since $\phi(t + 1/2\pi) = \phi(t) + i/2\pi$ for every $t \in \mathbb{R}$, \eqref{SelfSimilarCreator} generates infinitely many copies of $F$  when $s\to 0$ because $1/\beta_n(s) \to \infty$. We will see that the more $s$ approaches to zero, the more precise copies of $F$ we get. Yet, they are also smaller, so a second rescaling will be needed.  Let us first identify the ranges in which copies are formed.

\begin{lem}\label{LemmaCorners}
Let $n \in \mathbb{N}$. The self-similar expression \eqref{SelfSimilarCreator} generates a copy of $F$ starting at each point
\begin{equation}\label{PrincipalCorners}
s_{n,m} = \frac{1}{2\pi}\frac{1}{4m - 2\frac{c_n}{q_n}}, \qquad \text{ for large enough } m \in \mathbb{N}.
\end{equation}
Moreover, $s_{n,m} \to 0$ implies $m \to \infty$, and in this situation, $s_{n,m} \simeq 1/(8\pi m )$ are equivalent infinitesimals.
\end{lem}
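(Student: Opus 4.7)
The plan is to exploit the periodicity \eqref{PeriodicProperty} of $\phi$, which is precisely the mechanism that allows \eqref{SelfSimilarCreator} to generate copies of $F$. Recall $F=\phi([0,1/2\pi])$, so by \eqref{CopiesOfF}, a copy of $F$ (up to a vertical translation by $im/2\pi$) starts in the image of $\phi$ each time the argument crosses an integer multiple of $1/2\pi$. Hence the natural idea is to locate the values of $s$ at which the argument $-1/(16\pi^2\beta_n(s))$ lands precisely on a point of the form $-m/(2\pi)$ with $m\in\mathbb{N}$, since $\beta_n(s)>0$ forces the argument to be negative.

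First I would set up the equation
\[
\frac{-1}{16\pi^2\beta_n(s)}=-\frac{m}{2\pi},
\]
equivalently $\beta_n(s)=1/(8\pi m)$. Using the closed form $\beta_n(s)=s/(1+4\pi(c_n/q_n)s)$ from \eqref{DefOfBetan}, this is a single linear equation in $s$, and solving it directly yields
\[
s=\frac{1}{2\pi}\cdot\frac{1}{4m-2c_n/q_n},
\]
which is precisely $s_{n,m}$ as defined in \eqref{PrincipalCorners}. For this $s$ to be positive we need $4m>2c_n/q_n$; since $c_n/q_n\in[1,4]$ by \eqref{DefOfA} and the bounds on $c_\pm$, the condition $m$ \emph{large enough} is really just $m\geq 3$, but keeping the phrase \textit{large enough} has the advantage that we simultaneously guarantee $s_{n,m}$ lies within the range of validity of the asymptotic expansion for $\phi$ in Proposition~\ref{PropositionFromPart1} (and in particular that the remainders in \eqref{DefOfHn} are negligible).

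For the asymptotic behaviour, I would rewrite
\[
s_{n,m}=\frac{1}{8\pi m}\cdot\frac{1}{1-\frac{c_n}{2mq_n}}.
\]
Since $c_n/q_n$ is uniformly bounded (in $[1,4]$), the second factor tends to $1$ as $m\to\infty$, giving the asserted equivalence $s_{n,m}\simeq 1/(8\pi m)$. The same boundedness of $c_n/q_n$ makes the denominator $4m-2c_n/q_n$ go to $+\infty$ if and only if $m\to\infty$, so $s_{n,m}\to 0\Leftrightarrow m\to\infty$.

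The proof is essentially a one-line algebraic computation, so there is no real obstacle; the only subtle point is justifying the phrase \textbf{copy of $F$} in a rigorous way. I would do this by interpreting \eqref{SelfSimilarCreator} through the periodicity \eqref{PeriodicProperty}: writing $-1/(16\pi^2\beta_n(s))=-m/(2\pi)+r$ with $r\in[0,1/2\pi)$, the term $-2\phi(-m/(2\pi)+r)$ equals $-2\phi(r)+im/\pi$ up to constants, so the $s$-dependent piece is a rescaled copy of $\phi$ restricted to $[0,1/2\pi]$, and the values $s_{n,m}$ are exactly where this copy begins. This is where the self-similarity is actually \emph{realized}, and it is the reason these points play the role of corners in the rescaled picture of Figure~\ref{FIG_AsymptoticPicture}.
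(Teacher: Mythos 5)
Your proposal is correct and follows essentially the same route as the paper: you solve $\beta_n(s)=1/(8\pi m)$ using the closed form of $\beta_n$ to obtain $s_{n,m}$, and then use the uniform bound $1\leq c_n/q_n\leq 4$ to get both the equivalence $s_{n,m}\to 0\Leftrightarrow m\to\infty$ and the infinitesimal equivalence $s_{n,m}\simeq 1/(8\pi m)$. The extra paragraph making precise, via the periodicity \eqref{PeriodicProperty}, why these points mark the start of a copy of $F$ is a welcome elaboration but does not change the argument.
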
 
\begin{proof}
Copies of $F$ are generated by $\phi$ starting at $m/2\pi$ for every $m \in \mathbb{Z}$, so we look for points $s >0$ such that
\[ \frac{-1}{16\pi^2\beta_n(s)} = - \frac{m}{2\pi}, \qquad \forall m \in \mathbb{N}. \]
Solving, we find that those points, which we call $s = s_{n,m}$, satisfy
\[ \frac{1}{8\pi m} = \beta_n(s) = \frac{s}{1 + 4\pi s \frac{c_n}{q_n}} \qquad   \Leftrightarrow \qquad s = s_{n,m} = \frac{1}{2\pi}\frac{1}{4 m - 2 \frac{c_n}{q_n}}. \]
Moreover, since $1 \leq c_n/q_n \leq 4$, then $s_{n,m} \to 0$ if and only if $m \to \infty$, and one can check that  $\lim_{m \to \infty} 8\pi m \, s_{n,m} = 1$.
\end{proof}

Now, we locate $K_n$ in its corresponding range described by the points \eqref{PrincipalCorners} in Lemma~\ref{LemmaCorners}. 
\begin{lem}\label{LemmaRangeOfCopiesOfF}
Let $n \in \mathbb{N}$ be large enough. Then, there exists a unique $m = m(n) \in \mathbb{N}$ such that
\[ s_{n, m(n)+1} < \frac{K_n}{2\pi} \leq s_{n, m(n)}. \]
Moreover, $\lim_{n \to \infty}m(n) = +\infty$. 
\end{lem}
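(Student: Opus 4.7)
The proof is essentially a monotonicity argument based on the explicit formula for $s_{n,m}$, combined with a contradiction argument using $K_n \to 0$ (which is the standing hypothesis in the subsection $K=0$).

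First I would record the basic properties of the family $s_{n,m} = \frac{1}{2\pi}\frac{1}{4m - 2 c_n/q_n}$. Since $c_n/q_n \in [1,4]$ by \eqref{DefOfA}, the denominator $4m - 2c_n/q_n$ is strictly positive for every $m \geq 3$, so the sequence $(s_{n,m})_{m \geq 3}$ is well defined and positive. It is strictly decreasing in $m$ (the denominator is strictly increasing in $m$) and $\lim_{m\to\infty} s_{n,m} = 0$. These are purely algebraic facts, independent of $n$.

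Next I would establish existence and uniqueness of $m(n)$. Because $K_n \to 0$, for all $n$ sufficiently large we have $K_n/(2\pi) \leq s_{n,3}$, so the set
\[ \mathcal{M}_n = \{ m \geq 3 : s_{n,m} \geq K_n/(2\pi) \} \]
is nonempty. Since $s_{n,m} \to 0$, this set is finite, and we may define $m(n) = \max \mathcal{M}_n$. By strict monotonicity in $m$, this is the unique integer satisfying
\[ s_{n, m(n)+1} < \frac{K_n}{2\pi} \leq s_{n, m(n)}, \]
which is the desired bracketing.

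For the divergence $m(n) \to \infty$, I would argue by contradiction: if $(m(n))_n$ does not diverge, then it has a bounded, hence eventually constant, subsequence $(m(n_k))_k = M$. The right bracketing inequality gives
\[ \frac{K_{n_k}}{2\pi} > s_{n_k, M+1} = \frac{1}{2\pi}\cdot\frac{1}{4(M+1) - 2 c_{n_k}/q_{n_k}} \geq \frac{1}{2\pi}\cdot\frac{1}{4M+2}, \]
where the final inequality uses $c_{n_k}/q_{n_k} \geq 1$. This forces $K_{n_k} \geq 1/(4M+2) > 0$ along the subsequence, contradicting $\lim_n K_n = 0$.

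There is no real obstacle here: the entire argument is a direct consequence of the explicit form of $s_{n,m}$, the uniform bound on $c_n/q_n$, and the assumption $K_n \to 0$. The only point requiring mild care is verifying that $K_n/(2\pi) \leq s_{n,3}$ for $n$ large, which guarantees $\mathcal{M}_n \neq \emptyset$ and follows at once from $K_n \to 0$ and $s_{n,3} \geq \frac{1}{2\pi}\cdot \frac{1}{10}$.
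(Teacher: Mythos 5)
Your proof is correct and follows essentially the same route as the paper's: existence and uniqueness of $m(n)$ come from the strict monotonicity of $m \mapsto s_{n,m}$ together with $K_n \to 0$, and the divergence of $m(n)$ comes from the fact that $s_{n,m}$ is bounded below, uniformly in $n$, when $m$ stays bounded (a consequence of $1 \leq c_n/q_n \leq 4$). The paper phrases this last step as an appeal to Lemma~\ref{LemmaCorners} ($s_{n,m}\to 0$ if and only if $m \to \infty$), while you make the uniform lower bound explicit via a contradiction argument, but the content is identical.
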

\begin{proof}
For fixed $n \in \mathbb{N}$, the sequence $s_{n,m}$ is strictly decreasing to zero in $m$. Also since $\lim_{n \to \infty}{K_n} = 0$, choose $n$ big enough so that $K_n$ is smaller than at least one value $s_{n,m}$. Then, 
\[ \exists ! m = m(n) \qquad \text{ such that } \qquad  s_{n,m(n)+1} < \frac{K_n}{2\pi} \leq s_{n,m(n)}.  \]
As a consequence, since $\lim_{n \to \infty}K_n = 0$, we see that $ \lim_{n \to \infty}{ s_{n,m(n)+1} } = 0 $. 
According to Lemma~\ref{LemmaCorners}, this is equivalent to saying that $\lim_{n \to \infty}{m(n)+1} = +\infty$, which yields the result. 
\end{proof}

\begin{rem}
Many times, we write just $s_{m(n)}$ instead of $s_{n,m(n)}$.
\end{rem}

We proceed as follows: for fixed $n \in \mathbb{N}$, and once rescaled to $H_n$ by $q_n^{3/2}$, the point under analysis is in the interval $(s_{m(n)+1},s_{m(n)})$, which corresponds to a copy of $F$. Then, we rescale the function again when the parameter is in a wider interval, $(s_{m(n)+2},s_{m(n)})$, corresponding to two successive copies of $F$. While $\phi(t_{\rho})$ is in the first one, we take as approximations points in the second one, so that directions do not match to the ones of approximations $\phi(t_{\rho_n})$.

\begin{figure}[h]
\centering
\includegraphics[width=0.85\linewidth]{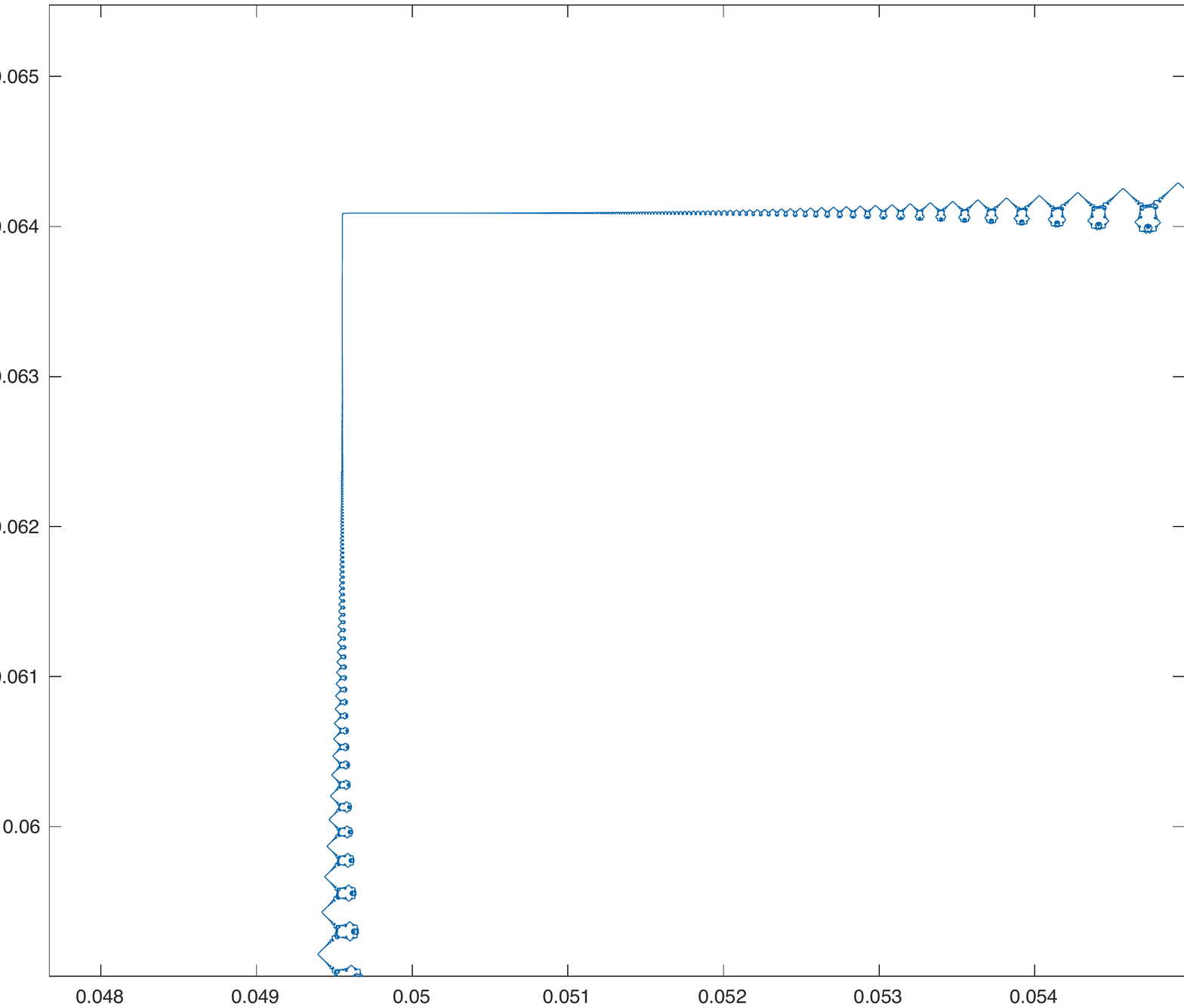}
\caption{\small Second rescaling of the asymptotic shown in Figure~\ref{FIG_AsymptoticPicture}, where the corner $H(0)$ is on the left, out of the picture. Self-similarity is clearly visible in terms of copies of $F$, all very similar to each other. The parameter interval $(s_{m(n)+2},s_{m(n)})$ represents two successive copies; the irrational $\phi(t_{\rho})$ lies on the right hand-sided one. }
\end{figure}

\subsubsection{\textbf{Step 1 - First rescaling to get $\boldsymbol{H_n}$}}
This step corresponds to \eqref{PhiInTermsOfH}.

\subsubsection{\textbf{Step 2 - Translate the copies of $\boldsymbol{F}$ in $\boldsymbol{(s_{n,m(n)+2},s_{n,m(n)})}$ to the origin}}
We translate a point on the left of $\phi(t_{\rho})$ to the origin. Since $\phi(t_{\rho})$ corresponds to $K_n \in (s_{m(n)+1},s_{m(n)})$, the approximation we use is $s_{m(n)+\mu}$ for some $1 \leq \mu \leq 2$, so  we compute
\begin{equation}\label{RescalingStep2}
H_n(s) - H_n(s_{m(n)+\mu})
\end{equation}  
using the asymptotic \eqref{DefOfHn}. If $s \in (s_{m(n)+1},s_{m(n)}]$, there exists $\alpha \in [0,1)$ such that
\begin{equation}\label{sAlpha}
 s = s_{m(n) + \alpha} = \frac{1}{2\pi} \frac{1}{4(m(n) + \alpha) - 2c_n/q_n}. 
\end{equation}
In particular, there exists $\alpha_n \in [0,1)$ such that $K_n = s_{m(n)+\alpha_n}$, and $\phi(t_{\rho})$ corresponds to $s_{m(n) + \alpha_n}$. Using \eqref{PrincipalCorners} and Taylor's expansion $(1+x)^{-1/2} \simeq 1 - x/2$ when $x \to 0$, the first term in the asymptotic of \eqref{RescalingStep2} is 
\begin{equation}\label{FirstTerm}
\begin{split}
\sqrt{s} - \sqrt{s_{m+\mu}} & = \frac{1}{\sqrt{2\pi}}\left( \frac{1}{\sqrt{4m + 4\alpha - 2\frac{c_n}{q_n}}} - \frac{1}{\sqrt{4m + 4\mu - 2\frac{c_n}{q_n}}} \right)\\
& \simeq \frac{1}{\sqrt{2\pi}}\,\frac{1}{\sqrt{4m}} \left( 1 - \frac{4\alpha - 2\frac{c_n}{q_n}}{8m} - 1 + \frac{4\mu - 2\frac{c_n}{q_n}}{8m}   \right)  = \frac{1}{\sqrt{2\pi}}\,\frac{\mu - \alpha}{4 m ^{3/2}}. 
\end{split}
\end{equation} 
when $m \to \infty$, where $\simeq$ stands for equivalent infinitesimals. The second term of \eqref{RescalingStep2} is
\[ \frac{8\pi^2}{3}i\left( \frac16 - \frac{i}{2\pi}\frac{c_n}{q_n} - 2\phi\left( \frac{-1}{16\pi^2\beta_n(s)} \right) \right)s^{3/2} - \frac{8\pi^2}{3}i\left( \frac16 - \frac{i}{2\pi}\frac{c_n}{q_n} - 2\phi\left( \frac{-1}{16\pi^2\beta_n(s_{m+\mu})} \right) \right) s_{m+\mu}^{3/2}, \]
which we split it into a sum $A+B$ such that
\begin{equation*}
\begin{split}
A & = \frac{8\pi^2}{3}i \left[ \left( \frac16 - \frac{i}{2\pi}\frac{c_n}{q_n} - 2\phi\left( \frac{-1}{16\pi^2\beta_n(s)} \right) \right) - \left( \frac16 - \frac{i}{2\pi}\frac{c_n}{q_n} - 2\phi\left( \frac{-1}{16\pi^2\beta_n(s_{m+\mu})} \right) \right)  \right] s^{3/2}  \\
& = \frac{8\pi^2}{3}i \left[ 2\phi\left( \frac{-1}{16\pi^2\beta_n(s_{m+\mu})} \right)  - 2\phi\left( \frac{-1}{16\pi^2\beta_n(s)}  \right)  \right] s^{3/2}
\end{split}
\end{equation*} 
and 
\[ B = \frac{8\pi^2}{3}i\left( \frac16 - \frac{i}{2\pi}\frac{c_n}{q_n} - 2\phi\left( \frac{-1}{16\pi^2\beta_n(s_{m+\mu})} \right) \right) \left( s^{3/2} - s_{m+\mu}^{3/2} \right). \]
From the proof of Lemma~\ref{LemmaCorners}, the definition of $s_{m + \alpha}$ and the periodic property of $\phi$, we have
\[ \phi\left( \frac{-1}{16\pi^2\beta_n(s_{m+\alpha})}\right) = \phi\left( -\frac{m+\alpha}{2\pi}  \right) = -i\frac{m+2}{2\pi} + \phi \left( \frac{2 - \alpha}{2\pi} \right), \]
so 
\[ A = \frac{16 \pi^2}{3} i \, \left(  \phi \left( \frac{2 - \mu}{2\pi} \right) - \phi \left( \frac{2 - \alpha}{2\pi} \right) \right) s^{3/2}. \]
On the other hand, in the same spirit as before, since $(1 + x)^{-3/2} \simeq 1 - 3x/2$ when $x \to 0$, we get
\begin{equation*}
\begin{split}
(2\pi)^{\frac32}\left( s^{3/2} - s_{m+\mu}^{3/2}\right) & = \frac{1}{ (4m + 4\alpha - 2\frac{c_n}{q_n})^{3/2} } - \frac{1}{ (4m + 4\mu - 2\frac{c_n}{q_n})^{3/2} } \\
& \simeq \frac{1}{(4m)^{3/2}} \left( 1 - \frac32 \frac{4\alpha - 2\frac{c_n}{q_n}}{4m} - 1 + \frac32 \frac{4\mu - 2\frac{c_n}{q_n}}{4m}  \right) = \frac{3(\mu - \alpha)}{16m^{5/2}} 
\end{split}
\end{equation*}
when $m \to \infty$. Hence, 
\begin{equation*}
\begin{split}
B & \simeq \frac{8\pi^2}{3}i\left( \frac16 - \frac{i}{2\pi}\frac{c_n}{q_n} + i\frac{m+2}{\pi} - 2\phi\left( \frac{2-\mu}{2\pi} \right) \right) \frac{3(\mu-\alpha)}{(2\pi)^{\frac32}16m^{5/2}} \\
&  = \frac{8\pi^2}{3}i\left( \frac16 - \frac{i}{2\pi}\frac{c_n}{q_n} + \frac{2i}{\pi} - 2\phi\left( \frac{2-\mu}{2\pi} \right) \right) \frac{3(\mu-\alpha)}{(2\pi)^{\frac32}16m^{5/2}} - \frac{(\mu-\alpha)}{4(2\pi)^{\frac12}m^{3/2}}.
\end{split}
\end{equation*}  
The last term in $B$ gets cancelled with \eqref{FirstTerm}, so
\begin{equation}\label{RescalingStep2Long}
\begin{split}
H_n(s) & - H_n(s_{m(n)+\mu})  \\
& = \frac{16 \pi^2}{3} i \, \left(  \phi \left( \frac{2 - \mu}{2\pi} \right) - \phi \left( \frac{2 - \alpha}{2\pi} \right) \right) s^{3/2} \\
&  \qquad + \frac{8\pi^2}{3}i\left( \frac16 - \frac{i}{2\pi}\frac{c_n}{q_n} + \frac{2i}{\pi} - 2\phi\left( \frac{2-\mu}{2\pi} \right) \right) \frac{3(\mu-\alpha)}{(2\pi)^{\frac32}16m^{5/2}}  + O(s^{\frac52}) + O(s_{m+\mu}^{\frac52}). 
\end{split}
\end{equation}
The rescaling suggested by \eqref{RescalingStep2Long} to obtain the copies of $F$ is $s^{-3/2}$.

\subsubsection{\textbf{Step 3 - Rescale the $\boldsymbol{s^{3/2}}$ term to identify the copy of $\boldsymbol{F}$}}
Define 
\begin{equation}\label{DefOfGn}
G_{n,\mu}(s) = s_{m+\mu}^{-\frac32}\left[ H_n(s) - H_n(s_{m+\mu}) \right], \qquad \text{ for } s \in (s_{m+1},s_m), 
\end{equation} 
which corresponds rescaling \eqref{RescalingStep2Long} with $s_{m+\mu}^{-3/2}$. First, we see that when $n \to \infty$ (and thus $m =m(n) \to \infty$), the higher order terms tend to zero independently of $\alpha$ because $\mu - \alpha \leq 2$ and
\[ 0 \leq \frac{\mu-\alpha}{m^{5/2}} s_{m+\mu}^{-3/2} \leq 2(2\pi)^{3/2}  \frac{ (4m + 4\mu - 2\frac{c_n}{q_n})^{3/2} }{ m^{5/2}  }  \to 0. \]
Also
\[ \frac{s_{m+\mu}^{5/2}}{s_{m+\mu}^{3/2}} = s_{m+\mu} \to 0 \qquad \text{ and } \qquad \frac{s^{5/2}}{s_{m+\mu}^{3/2}} = \frac{1}{2\pi}  \frac{ (4m + 4\mu - 2\frac{c_n}{q_n})^{3/2} }{ (4m + 4\alpha - 2\frac{c_n}{q_n})^{5/2} }  \to 0. \]
Consequently, 
\begin{equation}
\begin{split}
&  \lim_{n  \to \infty}{\left| G_{n,\mu}(s) - \frac{16 \pi^2}{3} i \, \left(  \phi \left( \frac{2 - \mu}{2\pi} \right) - \phi \left( \frac{2 - \alpha}{2\pi} \right) \right) \right| }\\
 & \qquad  = \lim_{n \to \infty}{ \left| \frac{16 \pi^2}{3} i \, \left(  \phi \left( \frac{2 - \mu}{2\pi} \right) - \phi \left( \frac{2 - \alpha}{2\pi} \right) \right) \left( \frac{s^{3/2}}{s_{m + \mu}^{3/2}} - 1  \right) \right|  }  \\
 & \qquad \leq \lim_{n \to \infty}{ C \left| \frac{s^{3/2}}{s_{m + \mu}^{3/2}} - 1 \right| } = 0,
\end{split}
\end{equation}
where $C>0$ is independent of $\alpha$ and $\mu$. The last equality holds because recalling that $s_{m+\mu} \leq s_{m+\alpha} = s$, we get
\[ 0 \leq \frac{s^{3/2}}{s_{m + \mu}^{3/2}} - 1 = \frac{4m + 4\mu - 2\frac{c_n}{q_n}}{4m + 4\alpha - 2\frac{c_n}{q_n}} - 1 = \frac{4(\mu - \alpha)}{4m + 4\alpha - 2\frac{c_n}{q_n} } \leq \frac{8}{4m + 4\alpha - 2\frac{c_n}{q_n} } \to 0. \]
Convergence is thus independent of $\alpha$, so we have proved the following proposition.

\begin{prop}\label{PropositionOfCopies}
Let $1 \leq \mu \leq 2$. Then,
\[ \lim_{n \to \infty}{ \sup_{\alpha \in (0,1)} \left| G_{n,\mu}\left( \frac{ 1 }{ 4m(n) + 4\alpha - 2\frac{c_n}{q_n} } \right) - \frac{16 \pi^2}{3} i \, \left(  \phi \left( \frac{2 - \mu}{2\pi} \right) - \phi \left( \frac{2 - \alpha}{2\pi} \right) \right) \right| } = 0. \]
\end{prop}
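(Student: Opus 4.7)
The plan is to make rigorous the uniform-in-$\alpha$ upgrade of the pointwise computation carried out in the paragraph preceding the statement. Starting from the identity \eqref{RescalingStep2Long} for $H_n(s) - H_n(s_{m(n)+\mu})$, I would evaluate at $s = s_{m(n)+\alpha}$, divide by $s_{m(n)+\mu}^{3/2}$ to obtain $G_{n,\mu}(s_{m(n)+\alpha})$, and decompose the result into three pieces: a principal term coming from the coefficient of $s^{3/2}$ in \eqref{RescalingStep2Long}, a linear correction of order $(\mu-\alpha)/m(n)^{5/2}$, and the error inherited from the expansion of $H_n$.

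The principal contribution is
\[
\tfrac{16\pi^{2}}{3}\, i \left( \phi\!\left( \tfrac{2-\mu}{2\pi} \right) - \phi\!\left( \tfrac{2-\alpha}{2\pi} \right) \right) \frac{s_{m(n)+\alpha}^{3/2}}{s_{m(n)+\mu}^{3/2}}.
\]
Using the explicit form \eqref{PrincipalCorners} together with $1 \le c_n/q_n \le 4$, the ratio $s_{m(n)+\alpha}^{3/2}/s_{m(n)+\mu}^{3/2}$ equals $1 + O(1/m(n))$ with a constant independent of $\alpha \in (0,1)$ and $\mu \in [1,2]$. Since $\phi$ is continuous and $1/2\pi$-quasi-periodic, the prefactor is bounded uniformly in $\alpha$ (over any compact set of values of $(2-\alpha)/2\pi$), so this piece agrees with the claimed limit up to an error vanishing uniformly. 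The linear correction, once divided by $s_{m(n)+\mu}^{3/2} \asymp 1/m(n)^{3/2}$ by Lemma~\ref{LemmaCorners}, is of order $|\mu-\alpha|/m(n) \le 2/m(n)$, uniformly in $\alpha$.

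The subtle step, which I expect to be the main technical obstacle, is controlling the error terms $O(s^{5/2}) + O(s_{m(n)+\mu}^{5/2})$ appearing in \eqref{RescalingStep2Long} after they are rescaled by $s_{m(n)+\mu}^{-3/2}$. These ultimately originate in the $O(q^{7/2}h^{5/2})$ remainder of Proposition~\ref{PropositionFromPart1}, so one must verify that after the change of variables $h = s/q_n^2$ and the compensating factor $q_n^{3/2}$ from \eqref{PhiInTermsOfH}, the implicit constant is independent of $n$. Granting this, both $s_{m(n)+\alpha}^{5/2}/s_{m(n)+\mu}^{3/2}$ and $s_{m(n)+\mu}$ are $O(1/m(n))$ by Lemma~\ref{LemmaCorners}, uniformly in $\alpha \in (0,1)$. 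Summing the three contributions and taking the supremum over $\alpha$ produces the desired conclusion as $n \to \infty$, since $m(n) \to \infty$.
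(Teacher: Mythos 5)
Your decomposition into the principal $s^{3/2}$ term, the $(\mu-\alpha)/m(n)^{5/2}$ correction, and the rescaled $O(s^{5/2})+O(s_{m(n)+\mu}^{5/2})$ remainders, each estimated uniformly in $\alpha$ via the explicit formula \eqref{PrincipalCorners} and $1\le c_n/q_n\le 4$, is exactly the argument the paper gives in Step 3; the bounds you state (ratio $=1+O(1/m(n))$, correction $O(1/m(n))$, errors $O(1/m(n))$) match the paper's. Your remark that the uniformity in $n$ of the implicit constant in the $O(s^{5/2})$ remainder must be traced back to Proposition~\ref{PropositionFromPart1} is a fair point that the paper itself leaves implicit, but granting it (as the paper does) your proof is complete and essentially identical.
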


\subsubsection{\textbf{Step 4 - Conclusion}}
Let us write $G_{n,\mu}$ in terms of $\phi$ using \eqref{Asymptotic013Rescaled}, \eqref{PhiInTermsOfH},  and \eqref{DefOfGn} so that
\[ G_{n,\mu}(s) = \frac{\sqrt{2\pi}}{1+i} e^{-1}q_n^{3/2}s_{m(n)+\mu}^{-3/2}\left[ \phi\left( t_{\rho_n} + \frac{s}{q_n^2} \right) - \phi\left( t_{\rho_n} + \frac{s_{m(n)+\mu}}{q_n^2} \right)  \right].  \]
Recall that $K_n/2\pi = s_{m(n) + \alpha_n}$, define $\alpha = \liminf_{n\to\infty}{\alpha_n} \in [0,1]$ and take the subsequence which, after being relabelled, satisfies $\lim_{n \to \infty}{\alpha_n} = \alpha$. From Proposition~\ref{PropositionOfCopies} and the continuity of $\phi$, we can write
\[ \lim_{ n \to \infty }{ G_{n,\mu}(K_n/2\pi) } = \frac{16 \pi^2}{3} i \, \left(  \phi \left( \frac{2 - \mu}{2\pi} \right) - \phi \left( \frac{2 - \alpha}{2\pi} \right) \right) \]
Then, since as long as $\alpha < \mu$ we have $0 < s_{m(n)+\mu} < s_{m(n)+\alpha}$, let us use the points $t_{\rho_n} + s_{m(n)+\mu}/q_n^2$ as approximations to $t_{\rho}$ for the limit defining the tangent, which is
\begin{equation}\label{TangentFromApproximation3}
\begin{split}
  \lim_{n \to \infty}{  \frac{ \phi\left( t_{\rho} \right) - \phi\left( t_{\rho_n} + \frac{s_{m(n)+\mu}}{q_n^2} \right)  }{ \left| \phi\left( t_{\rho} \right) - \phi\left( t_{\rho_n} + \frac{s_{m(n)+\mu}}{q_n^2} \right) \right|  } }  
& =  \lim_{n \to \infty}{ \frac{  \frac{1+i}{\sqrt{2\pi}}\,e \, q_n^{-3/2}s_{m(n)+\mu}^{3/2} G_{n,\mu}(K_n/2\pi)  }{  \left| \frac{1+i}{\sqrt{2\pi}} \, e \, q_n^{-3/2}s_{m(n)+\mu}^{3/2} G_{n,\mu}(K_n/2\pi) \right| }   } \\
& =  \frac{1+i}{\sqrt{2}} \, e \, \lim_{n \to \infty}{ \frac{    G_{n,\mu}(K_n/2\pi)  }{  \left|  G_{n,\mu}(K_n/2\pi) \right| }   }  \\
&  =   i \frac{1+i}{\sqrt{2}} \, e \, \frac{     \phi \left( \frac{2 - \mu}{2\pi} \right) - \phi \left( \frac{2 - \alpha}{2\pi} \right)    }{  \left|    \phi \left( \frac{2 - \mu}{2\pi} \right) - \phi \left( \frac{2 - \alpha}{2\pi} \right)   \right|  }  .
\end{split}
\end{equation}
Define 
\[ f(x) = \operatorname{arg} \left( \phi\left( \frac{2 - x}{2\pi} \right) - \phi\left(\frac{2 - \alpha}{2\pi} \right)  \right), \qquad \text{ for } x \in (1,2), \]
which is continuous. Moreover, $f(1) =  \operatorname{arg} \left( \frac{i}{2\pi} - \phi\left(\frac{2 - \alpha}{2\pi} \right)  \right)$ and $f(2) = \operatorname{arg} \left(  - \phi \left(\frac{2 - \alpha}{2\pi} \right)  \right)$. If $\alpha \in (0,1)$, then $f(1) \neq f(2)$ and by continuity, $f$ takes all values in $( f(1),f(2)) \in [0,2\pi)$. If $\alpha = 0$ or $\alpha = 1$, we may choose the interval $(f(2),f(3/2))$. In both cases, these intervals correspond to an open set $V \subset \mathbb{S}^1$ such that for any $\mathbb{V} \in V$, there exists $\mu_{\mathbb{V}} \in (1,2)$ such that 
\[ \frac{     \phi \left( \frac{2 - \mu_{\mathbb{V}}}{2\pi} \right) - \phi \left( \frac{2 - \alpha}{2\pi} \right)    }{  \left|    \phi \left( \frac{2 - \mu_{\mathbb{V}}}{2\pi} \right) - \phi \left( \frac{2 - \alpha}{2\pi} \right)   \right|  } = \mathbb{V}, \]
and therefore there is a sequence of approximations $\phi(t_{\rho_n} + s_{m(n)+\mu_{\mathbb{V}}}/q_n^2) $ such that 
\[   \lim_{n \to \infty}{  \frac{ \phi\left( t_{\rho} \right) - \phi\left( t_{\rho_n} + s_{m(n)+\mu_{\mathbb{V}}}/q_n^2 \right)  }{ \left| \phi\left( t_{\rho} \right) - \phi\left( t_{\rho_n} + s_{m(n)+\mu_{\mathbb{V}}}/q_n^2 \right) \right|  } }  =  i \frac{1+i}{\sqrt{2}} \, e \, \mathbb{V}. \]

\appendix

\section{Proof of Lemma~\ref{LemmaUniformConvergence}}\label{Appendix}
\renewcommand\thesection{\arabic{section}}
\setcounter{section}{5}
\setcounter{thm}{4}
\begin{lem}
Let $M >0$. Then, the sequence of functions $H_n$ converges uniformly to $H$ in $(0,M)$. In other words,
\[ \lim_{n \to \infty}{\lVert H_n - H \rVert_{L^{\infty}([0,M])}} = 0. \]
\end{lem}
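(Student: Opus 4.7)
Working from the closed-form expressions
\[ H_n(s) = \sqrt{\pi}\tfrac{1-i}{\sqrt{2}} \left( \tfrac{\phi(\beta_n(s))}{(1-4\pi\tfrac{c_n}{q_n}\beta_n(s))^{3/2}} - 6\pi\tfrac{c_n}{q_n}\int_{0}^{\beta_n(s)}\tfrac{\phi(r)}{(1 - 4\pi \tfrac{c_n}{q_n}r)^{5/2}}\,dr \right) \]
and its analogue for $H$, I would prove uniform convergence term by term. The whole setup is tame because $c_n/q_n \in [1,4]$ and $c_n/q_n \to a \in [1,4]$, so for every $s \in [0,M]$ the quantity $\beta_n(s) = s/(1+4\pi (c_n/q_n) s)$ lies in $[0, 1/(4\pi)]$ uniformly in $n$, and the denominators $1 - 4\pi(c_n/q_n)\beta_n(s) = (1+4\pi(c_n/q_n)s)^{-1}$ are bounded below by $(1+16\pi M)^{-1} > 0$.

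The first step is the elementary uniform estimate
\[ |\beta_n(s) - \beta(s)| = \frac{4\pi s^2 |a - c_n/q_n|}{(1+4\pi(c_n/q_n)s)(1+4\pi a s)} \le 4\pi M^2 \bigl|a - \tfrac{c_n}{q_n}\bigr| \to 0 \quad \text{on } [0,M], \]
so $\beta_n \to \beta$ uniformly. Since $\phi$ is continuous (in fact Hölder), it is uniformly continuous on any compact set, so $\phi \circ \beta_n \to \phi \circ \beta$ uniformly on $[0,M]$. Combined with the simplification $(1-4\pi (c_n/q_n)\beta_n(s))^{-3/2} = (1+4\pi(c_n/q_n)s)^{3/2}$, which converges uniformly to $(1+4\pi a s)^{3/2}$ on $[0,M]$, the first term of $H_n$ converges uniformly to the first term of $H$.

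For the integral term, the main obstacle is that both the integrand and the domain depend on $n$. I would remove the moving endpoint by the change of variables $r = \beta_n(u)$, $u \in [0,s]$, which gives $dr = du/(1+4\pi (c_n/q_n)u)^2$ and $1-4\pi(c_n/q_n)r = (1+4\pi(c_n/q_n)u)^{-1}$, reducing the integral to
\[ \int_0^{\beta_n(s)} \frac{\phi(r)\,dr}{(1-4\pi\tfrac{c_n}{q_n}r)^{5/2}} = \int_0^s \phi(\beta_n(u))\,\bigl(1+4\pi\tfrac{c_n}{q_n}u\bigr)^{1/2}\,du. \]
The analogous identity holds for $H$ with $a$ in place of $c_n/q_n$. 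The integrand on $[0,M]$ is now dominated by $\|\phi\|_{L^\infty([0,1/(4\pi)])}(1+16\pi M)^{1/2}$ uniformly in $n$, and converges uniformly to $\phi(\beta(u))(1+4\pi a u)^{1/2}$ by the previous step. Hence
\[ \sup_{s \in [0,M]} \left| \int_0^s \bigl[\phi(\beta_n(u))(1+4\pi\tfrac{c_n}{q_n}u)^{1/2} - \phi(\beta(u))(1+4\pi a u)^{1/2}\bigr]\,du \right| \le M \cdot \varepsilon_n \to 0, \]
and together with $6\pi(c_n/q_n) \to 6\pi a$ this shows the second term converges uniformly as well. Combining both pieces yields $\|H_n - H\|_{L^\infty([0,M])} \to 0$, which is the desired conclusion. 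The only non-routine point is recognising that the change of variables converts the floating upper limit into a fixed one; after that, everything reduces to dominated convergence against an explicit uniform bound.
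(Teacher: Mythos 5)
Your proof is correct, and while it starts from the same closed-form expressions and the same two-term decomposition as the paper, the technical route is genuinely different in two places. First, for the difference $\phi(\beta_n(s))-\phi(\beta(s))$ the paper invokes Duistermaat's global $C^{1/2}$ regularity of $\phi$ to get the quantitative bound $C\,|a-c_n/q_n|^{1/2}$, and similarly bounds every other piece by an explicit power of $|a-c_n/q_n|$, ending with $\lVert H_n-H\rVert_{L^\infty([0,M])}\leq C_M(|a-c_n/q_n|^{1/2}+|a-c_n/q_n|)$; you instead use only uniform continuity of $\phi$ on the compact set $[0,1/(4\pi)]$ together with $\lVert\beta_n-\beta\rVert_\infty\to 0$. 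Your argument is more elementary (it needs no H\"older estimate) but is purely qualitative, whereas the paper's gives a rate — irrelevant for the lemma as stated, but worth knowing. Second, and more substantially, for the integral term the paper splits the difference into three pieces ($B_1$ for the coefficient $c_n/q_n$ vs.\ $a$, $B_2$ for the change of integrand, $B_3$ for the moving endpoint $\beta_n(s)$ vs.\ $\beta(s)$) and estimates each; your substitution $r=\beta_n(u)$, exploiting $1-4\pi\frac{c_n}{q_n}\beta_n(u)=(1+4\pi\frac{c_n}{q_n}u)^{-1}$ and $\beta_n'(u)=(1+4\pi\frac{c_n}{q_n}u)^{-2}$, converts the integral to $\int_0^s\phi(\beta_n(u))(1+4\pi\frac{c_n}{q_n}u)^{1/2}\,du$ with a \emph{fixed} endpoint, after which a single uniform-convergence-of-integrands bound over the interval of length at most $M$ finishes the job. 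The same identity also turns the first term's denominator into the explicit polynomial $(1+4\pi\frac{c_n}{q_n}s)^{3/2}$, sidestepping the paper's algebra with differences of cubes. Both approaches are sound; yours is cleaner, the paper's is more explicit. (One cosmetic remark: what you call ``dominated convergence'' at the end is really just uniform convergence of the integrands on a bounded interval — the uniform bound you exhibit is not actually needed once uniform convergence is in hand.)
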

To prove this lemma, we need some preliminary computations. Remember that we are working with an irrational $\rho \in [0,1]\setminus\mathbb{Q}$ that is approximated by the continued fraction convergents $p_n/q_n$. The sequence $c_n$ corresponds to the values of $c_\pm$ appearing in Proposition~\ref{PropositionFromPart1} for each of the approximations $p_n/q_n$. Also, recall that several subsequences were extracted such that $c_n >0$ and also such that $\lim_{n\to\infty}c_n/q_n = a$. 
\renewcommand\thesection{\Alph{section}}
\setcounter{section}{1}
\setcounter{thm}{0}
\begin{lem}\label{AuxiliaryLemma}
Let $\phi$ be Riemann's non-differentiable function \eqref{RiemannFunction}. Let $\beta_n(s)$ and $\beta(s)$ be defined as in \eqref{DefOfBetan} and \eqref{DefOfBeta} and $a$ as in \eqref{DefOfA}.  Let $M >0$. Then, there exists a constant $C>0$ such that the following hold for any $s>0$ and for any $n \in \mathbb{N}$:
\begin{enumerate}[(a)]
	\item \label{itm:1} $ 0 \leq \beta_n(s) \leq 1/4\pi $ and $  0 \leq \beta(s) \leq 1/4\pi $.
	\item \label{itm:2} If $s \leq M$, then $1 \leq (1-4\pi\frac{c_n}{q_n}\beta_n(s))^{-1} \leq 1 + 16\pi M$ \text{ and } $1 \leq (1-4\pi a \beta(s))^{-1} \leq 1 + 16\pi M$.
	\item \label{itm:3} $ \left|\beta_n(s) - \beta(s) \right| \leq C\, \left| a - c_n/q_n \right| $.  
	\item \label{itm:4} $ \left| \phi( \beta_n(s)) - \phi(\beta(s)) \right| \leq C\, \left| a - c_n/q_n \right|^{1/2} $.
	\item \label{itm:5} $ \left|\beta_n(s)^2 - \beta(s)^2 \right| \leq C\, \left| a - c_n/q_n \right|$. 
	\item \label{itm:6} $ \left|\beta_n(s)^3 - \beta(s)^3 \right| \leq C\, \left| a - c_n/q_n \right| $. 
	\item \label{itm:7} $ \left| \frac{c_n}{q_n}\beta_n(s) - a\beta(s) \right| \leq C\, \left| a - c_n/q_n \right| $. 
	\item \label{itm:8} $ \left| \left(\frac{c_n}{q_n}\beta_n(s) \right)^2 - \left(a\beta(s)\right)^2 \right| \leq C\, \left| a - c_n/q_n \right| $. 
		\item \label{itm:9} $ \left| \left(\frac{c_n}{q_n}\beta_n(s) \right)^3 - \left(a\beta(s)\right)^3 \right| \leq   C\, \left| a - c_n/q_n \right| $. 
\end{enumerate}
\end{lem}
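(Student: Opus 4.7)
The plan is to reduce the whole lemma to the single algebraic identity
\[ 1 - 4\pi \tfrac{c_n}{q_n}\,\beta_n(s) = \frac{1}{1+4\pi \tfrac{c_n}{q_n}\, s} \qquad \text{(and its analogue for } a,\beta), \]
which collapses the awkward expressions appearing in (\ref{itm:1}) and (\ref{itm:2}). Indeed, this identity together with the standing assumption $c_n/q_n, a \in [1,4]$ yields $\beta_n(s)\le 1/(4\pi \cdot c_n/q_n)\le 1/4\pi$ and, for $s\le M$, $(1-4\pi(c_n/q_n)\beta_n(s))^{-1}=1+4\pi (c_n/q_n) s\le 1+16\pi M$. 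The $\beta$-analogues are identical. So (\ref{itm:1}) and (\ref{itm:2}) are immediate.

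For (\ref{itm:3}), bringing $\beta_n(s)-\beta(s)$ to a common denominator gives
\[ \beta_n(s)-\beta(s) = \frac{4\pi\, s^2 \,(a-c_n/q_n)}{(1+4\pi (c_n/q_n) s)(1+4\pi a s)}. \]
The function $s \mapsto s^2/[(1+4\pi s)^2]$ is bounded on $[0,\infty)$ by $1/(4\pi)^2$, and since $c_n/q_n, a \ge 1$ the denominator is bounded below uniformly in $s$, giving (\ref{itm:3}) with a constant independent of $s$. Part (\ref{itm:4}) then follows from the Hölder regularity of Riemann's function $\phi$: since $\phi \in C^{1/2}$ (which may be extracted from the Fourier series \eqref{RiemannFunction} or from the multifractal spectrum \eqref{SpectrumOfSingularities}), there exists $C>0$ such that $|\phi(\beta_n(s))-\phi(\beta(s))|\le C|\beta_n(s)-\beta(s)|^{1/2}$, and (\ref{itm:3}) closes the estimate.

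For (\ref{itm:5})--(\ref{itm:6}), I use the telescoping factorisation $x^k-y^k = (x-y)\sum_{j=0}^{k-1} x^{k-1-j}y^{j}$ with $x=\beta_n(s)$, $y=\beta(s)$. Each factor $\beta_n^{j}, \beta^{j}$ is bounded by $(4\pi)^{-j}$ by (\ref{itm:1}), so
\[ |\beta_n(s)^k-\beta(s)^k| \le k\,(4\pi)^{-(k-1)}|\beta_n(s)-\beta(s)| \le C\,|a-c_n/q_n|  \]
by (\ref{itm:3}). For (\ref{itm:7})--(\ref{itm:9}), I split
\[ \tfrac{c_n}{q_n}\beta_n(s)-a\,\beta(s) = \tfrac{c_n}{q_n}\bigl(\beta_n(s)-\beta(s)\bigr) + \bigl(\tfrac{c_n}{q_n}-a\bigr)\beta(s), \]
and bound each summand using the uniform $L^\infty$ bound $c_n/q_n\le 4$, the bound $\beta(s)\le 1/4\pi$ from (\ref{itm:1}), and the estimate (\ref{itm:3}). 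Parts (\ref{itm:8}) and (\ref{itm:9}) are then obtained by applying the same telescoping trick as in (\ref{itm:5})--(\ref{itm:6}) to $x=(c_n/q_n)\beta_n(s)$ and $y=a\beta(s)$, using that these are uniformly bounded (being products of bounded quantities), and invoking (\ref{itm:7}) for the factor $|x-y|$.

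I do not expect a genuine obstacle here; the lemma is entirely mechanical once (\ref{itm:3}) is proved and the Hölder bound on $\phi$ is in hand. The only mildly delicate point is the uniformity in $s\in[0,M]$, which is guaranteed by the uniform bounds in (\ref{itm:1})--(\ref{itm:2}) together with the fact that the $s$-dependence in the estimate for (\ref{itm:3}) enters only through the bounded factor $s^2/[(1+4\pi s)(1+4\pi a s)]$.
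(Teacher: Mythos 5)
Your proof is correct and follows essentially the same route as the paper: parts (\ref{itm:1})--(\ref{itm:2}) via the identity $1-4\pi\frac{c_n}{q_n}\beta_n(s)=(1+4\pi\frac{c_n}{q_n}s)^{-1}$, part (\ref{itm:3}) by the same common-denominator computation, part (\ref{itm:4}) from the global $C^{1/2}$ bound on $\phi$ (the paper cites Duistermaat's Lemma 4.1 for this, but your direct extraction from the series also works since the arguments lie in the bounded interval $[0,1/4\pi]$), and parts (\ref{itm:5})--(\ref{itm:9}) by bounding differences of powers, where your telescoping factorisation is only a cosmetic streamlining of the paper's explicit expansions. No gaps.
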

\begin{proof}
For \eqref{itm:1}, using that $s \geq 0$ and $1 \leq c_n/q_n \leq 4$, we get 
\[ 0 \leq \beta_n(s) =  \frac{s}{1 + 4\pi \frac{c_n}{q_n}s} \leq \frac{s}{1+4\pi s} \leq \frac{1}{4\pi}.  \]
The same holds for $\beta(s)$ because $1 \leq a \leq 4$. For \eqref{itm:2} by the definition of $\beta_n(s)$ and if $0 \leq s \leq M$, we write 
\[ 1-4\pi \frac{c_n}{q_n}\beta_n(s) =  \frac{1}{1+4\pi \frac{c_n}{q_n}s} \quad \Longrightarrow \quad 1 \leq  \frac{1}{1-4\pi\frac{c_n}{q_n}\beta_n(s)} = 1+4\pi \frac{c_n}{q_n}s \leq  1 + 16\pi M. \]
The other inequality in \eqref{itm:2} follows the same way. For \eqref{itm:3}, we write
\[ \beta_n(s) - \beta(s) = \frac{s}{1 + 4\pi \frac{c_n}{q_n}s} - \frac{s}{1 + 4\pi a s}  =  \left( a  - \frac{c_n}{q_n} \right) \frac{4\pi s^2 }{(1 + 4\pi \frac{c_n}{q_n} s)(1 + 4\pi a s)} =  4\pi \beta(s)\beta_n(s) \left( a  - \frac{c_n}{q_n} \right),\]
and using \eqref{itm:1}, we get
\[ \left|\beta_n(s) - \beta(s) \right|  \leq \frac{1}{4\pi} \left| a  - \frac{c_n}{q_n} \right| .\]
To prove \eqref{itm:4}, we need a result of Duistermaat \cite[Lemma 4.1]{Duistermaat1991} stating that $\phi_D$ is globally $C^{1/2}$. From this, there exists $C>0$ such that 
\[ \left| \phi(x)-\phi(y) \right| \leq C\left( |x-y|^{1/2} + |x-y| \right), \quad \forall x,y \in \mathbb{R}.  \]
This turns into $  \left| \phi(x)-\phi(y) \right| \leq C |x-y|^{1/2} $ in case $|x-y| \leq 1$. Then, by \eqref{itm:3}, we have $|\beta_n(s) - \beta(s)| \leq 1$, so 
\[ \left| \phi( \beta_n(s)) - \phi(\beta(s)) \right| \leq C\left|\beta_n(s) - \beta(s) \right|^{1/2}  \leq C\, \left| a  - \frac{c_n}{q_n} \right|^{1/2}. \]
Properties \eqref{itm:5} and \eqref{itm:6} are proved like \eqref{itm:3}. Indeed, 
\begin{equation*}
\begin{split}
\beta_n(s)^2 - \beta(s)^2 & = s^2\, \frac{(1 + 4\pi a s)^2 - (1 + 4\pi \frac{c_n}{q_n} s)^2 }{ (1 + 4\pi \frac{c_n}{q_n} s)^2(1 + 4\pi a s)^2 } \\
& = 8\pi\frac{s^3 \, \left( a - c_n/q_n \right)}{(1 + 4\pi a s)^2(1 + 4\pi \frac{c_n}{q_n} s)^2} + 16\pi^2\frac{s^4\, \left( a^2 - \left(c_n/q_n\right)^2 \right)}{(1 + 4\pi a s)^2(1 + 4\pi \frac{c_n}{q_n} s)^2}.
\end{split}
\end{equation*} 
Since $a,c_n/q_n \leq 4$ implies $a + c_n/q_n \leq 8$, using \eqref{itm:1} we get 
\begin{equation*}
\begin{split}
\left| \beta_n(s)^2 - \beta(s)^2 \right| & \leq 8\pi |\beta_n(s)|^2 \frac{|\beta(s)|}{1 + 4\pi a s}\left| a - \frac{c_n}{q_n} \right| + 16\pi^2|\beta_n(s)|^2|\beta(s)|^2\left| a - \frac{c_n}{q_n} \right|\left| a + \frac{c_n}{q_n} \right| \\
& \leq \frac{1}{8\pi^2} \left| a - \frac{c_n}{q_n} \right| + \frac{8}{16\pi^2}\left| a - \frac{c_n}{q_n} \right| = \frac{5}{8\pi^2}\left| a - \frac{c_n}{q_n} \right| .
\end{split}
\end{equation*}
For \eqref{itm:6}, we write
\begin{equation*}
\begin{split}
\beta_n(s)^3 - \beta(s)^3 & = s^3 \frac{(1 + 4\pi a s)^3 - (1 + 4\pi \frac{c_n}{q_n} s)^3}{ (1 + 4\pi \frac{c_n}{q_n} s)^3(1 + 4\pi a s)^3 } \\
& = 12\pi s^4\frac{a-c_n/q_n}{(1 + 4\pi \frac{c_n}{q_n} s)^3(1 + 4\pi a s)^3} + 48\pi^2 s^5\frac{a^2-\left(c_n/q_n\right)^2}{(1 + 4\pi \frac{c_n}{q_n} s)^3(1 + 4\pi a s)^3} \\
& + (4\pi)^3 s^6\frac{a^3-\left(c_n/q_n\right)^3}{(1 + 4\pi \frac{c_n}{q_n} s)^3(1 + 4\pi a s)^3} ,
\end{split}
\end{equation*}
so since $|a^3-\left(c_n/q_n\right)^3| \leq  |a^2 + ac_n/q_n + c_n^2/q_n^2|  \leq 3\cdot 4^2 |a-c_n/q_n| $, we get
\begin{equation*}
\begin{split}
\left| \beta_n(s)^3 - \beta(s)^3 \right| & \leq 12\pi |\beta_n(s)|^3|\beta(s)|\left| a - c_n/q_n \right| + 48\pi^2 \cdot 8 |\beta_n(s)|^3|\beta(s)|^2\left| a - c_n/q_n \right| \\
& + (4\pi)^3|\beta_n(s)|^3|\beta(s)|^3 48 \left| a - c_n/q_n \right| \\
& \leq C\, \left| a - c_n/q_n \right|. 
\end{split}
\end{equation*}
The remaining properties are proved all by the same method. For \eqref{itm:7}, we use \eqref{itm:1} and \eqref{itm:3} to write
\[  \left| \frac{c_n}{q_n}\beta_n(s) - a\beta(s) \right| \leq |\beta_n(s)|\left| \frac{c_n}{q_n}-a \right| + a|\beta_n(s) - \beta(s)| \leq \frac{1+a}{4\pi} \left| a -  \frac{c_n}{q_n}  \right| \leq  \frac{5}{4\pi} \left| a -  \frac{c_n}{q_n}  \right|. \]
For \eqref{itm:8}, we use \eqref{itm:1} and \eqref{itm:5} and write
\begin{equation*}
\begin{split}
\left| \left(\frac{c_n}{q_n}\beta_n(s) \right)^2 - \left(a\beta(s)\right)^2 \right| &  \leq |\beta_n(s)|^2 \left| \left(\frac{c_n}{q_n} \right)^2 - a^2 \right| + a^2\left| \beta_n(s)^2 - \beta(s)^2 \right|  \leq C\, \left| a -  \frac{c_n}{q_n}\right|. \\
\end{split}
\end{equation*}
Last, for \eqref{itm:9}, we use \eqref{itm:1} and \eqref{itm:6} to write
\begin{equation*}
\begin{split}
\left| \left(\frac{c_n}{q_n}\beta_n(s) \right)^3 - \left(a\beta(s)\right)^3 \right| &  \leq |\beta_n(s)|^3 \left| \left(\frac{c_n}{q_n} \right)^3 - a^3 \right| + a^3\left| \beta_n(s)^3 - \beta(s)^3 \right| \leq C\, \left| a -  \frac{c_n}{q_n}\right| . \\
\end{split}
\end{equation*}
\end{proof}

\begin{proof}[Proof of Lemma~\ref{LemmaUniformConvergence}]
In this proof, we disregard constants not depending on $M$ and on $s$. From \eqref{DefOfHnClosed} and \eqref{DefOfH} we write
\begin{equation}\label{SplittingProofLemma}
H_n(s) - H(s) = A + B,
\end{equation}
where
\begin{equation}\label{ProofLemmaA}
A = \frac{\phi(\beta_n(s))}{(1-4\pi\frac{c_n}{q_n}\beta_n(s))^{3/2}} - \frac{\phi(\beta(s))}{(1-4\pi a \beta(s))^{3/2}}
\end{equation}
and 
\begin{equation}\label{ProofLemmaB}
B = \frac{c_n}{q_n} \int_0^{\beta_n(s)}{ \frac{\phi(r)}{(1-4\pi\frac{c_n}{q_n}r)^{5/2}}\,dr } - a \int_0^{\beta(s)}{ \frac{\phi(r)}{(1-4\pi  a r)^{5/2}}\,dr }.
\end{equation}
Let us split $A = A_1 + A_2$ such that
\[ A_1 = \phi(\beta_n(s)) \left( \frac{1}{(1-4\pi\frac{c_n}{q_n}\beta_n(s))^{3/2}} - \frac{1}{(1-4\pi a\beta(s))^{3/2}} \right) \]
and 
\[ A_2 =  \frac{\phi(\beta_n(s)) - \phi(\beta(s))}{\left( 1-4\pi a \beta(s)\right)^{3/2}}. \]
By Lemma~\ref{AuxiliaryLemma}-\eqref{itm:2} and \eqref{itm:4}, we have
\[ |A_{2}| \leq (1+16\pi M)^{\frac32} \left| a - \frac{c_n}{q_n} \right|^{\frac12}. \]
For $A_1$, by Lemma~\ref{AuxiliaryLemma}-\eqref{itm:1} we have $ |\phi(\beta_n(s))| \leq \lVert \phi \rVert_{L^{\infty}([0,1/(4\pi)]} $, so using Lemma~\ref{AuxiliaryLemma}-\eqref{itm:2} we write
\begin{equation*}\label{BoundA1}
\begin{split}
|A_1| & \leq \frac{\left| (1-4\pi\frac{c_n}{q_n}\beta_n(s))^3 - (1-4\pi a \beta(s))^3 \right| }{(1-4\pi\frac{c_n}{q_n}\beta_n(s))^{\frac32}(1-4\pi a \beta(s))^{\frac32}\left( (1-4\pi\frac{c_n}{q_n}\beta_n(s))^{\frac32} + (1-4\pi a \beta(s))^{\frac32} \right)}  \\
& \leq C_M\,   \left| (1-4\pi\frac{c_n}{q_n}\beta_n(s))^3 - (1-4\pi a \beta(s))^3 \right| \\
& \leq C_M\, \left( \left| \frac{c_n}{q_n}\beta_n(s) - a\beta(s) \right| + \left| \left(\frac{c_n}{q_n}\beta_n(s) \right)^2 - (a\beta(s))^2 \right| + \left| \left(\frac{c_n}{q_n}\beta_n(s) \right)^3 - (a\beta(s))^3 \right| \right),
\end{split}
\end{equation*} 
where $C_M = (1+16\pi M)^{9/2}$. Then, from Lemma~\ref{AuxiliaryLemma}-\eqref{itm:7}, \eqref{itm:8} and \eqref{itm:9} we get $ |A_1| \leq C_M  \left| a - c_n/q_n \right|$. Overall,
\begin{equation}\label{BoundForA}
 |A| \leq C_M \left( \left| a - \frac{c_n}{q_n} \right|^{1/2} + \left| a - \frac{c_n}{q_n} \right| \right).
\end{equation}
For $B$, we write
\begin{equation*}
\begin{split}
|B| & \leq \left| a - \frac{c_n}{q_n} \right| \left| \int_0^{\beta_n(s)}{ \frac{\phi(r)}{(1-4\pi\frac{c_n}{q_n}r)^{5/2}}\,dr } \right| + a \left| \int_0^{\beta_n(s)}{ \left( \frac{\phi(r)}{(1-4\pi\frac{c_n}{q_n}r)^{5/2}} -  \frac{\phi(r)}{(1-4\pi a r)^{5/2}} \right) \,dr } \right| \\
& + a \left| \int_{\beta(s)}^{\beta_n(s)}{ \frac{\phi(r)}{(1-4\pi a r)^{5/2}}\,dr } \right| \\
& = |B_1| + |B_2| + |B_3|.
\end{split}
\end{equation*}  
From Lemma~\ref{AuxiliaryLemma}-\eqref{itm:2}, we deduce that if $0 \leq r \leq  \beta_n(s)$, then
\begin{equation}\label{APropertyForR}
 1 \leq \frac{1}{1-4\pi\frac{c_n}{q_n}r} \leq 1+16\pi M.
\end{equation}  
Hence, 
\[ |B_1| \leq |\beta_n(s)| \lVert \phi \rVert_{L^{\infty}([0,1/(4\pi)])}(1+16\pi M)^{\frac52} \left| a - c_n/q_n \right| \leq C_M\, \left| a - c_n/q_n \right|. \]
Also from Lemma~\ref{AuxiliaryLemma}-\eqref{itm:2}, if $r$ is between $\beta_n(s)$ and $\beta(s)$, then $r \leq \max\{  \beta_n(s), \beta(s)\}$, so 
\begin{equation}\label{APropertyForR2}
 1 \leq \frac{1}{1-4\pi a r} \leq 1+16\pi M
\end{equation}
is also satisfied. Thus, by Lemma~\ref{AuxiliaryLemma}-\eqref{itm:3} we get
\[ |B_3| \leq a |\beta_n(s)-\beta(s)| \lVert \phi \rVert_{L^{\infty}([0,1/(4\pi)])}(1+16\pi M)^{\frac52} \leq C_M\, \left| a - c_n/q_n \right|.  \]
For $B_2$, we need to compute
\[ \frac{1}{(1-4\pi\frac{c_n}{q_n}r)^{\frac52}} - \frac{1}{(1-4\pi  a r)^{\frac52}} = \frac{(1-4\pi a r)^5 - (1-4\pi\frac{c_n}{q_n}r)^5}{ (1-4\pi\frac{c_n}{q_n}r)^{\frac52}(1-4\pi a r)^{\frac52}\left( (1-4\pi\frac{c_n}{q_n}r)^{\frac52} + (1-4\pi a r)^{\frac52} \right) },  \]
and from \eqref{APropertyForR} and \eqref{APropertyForR2}, we get
\begin{equation*}
\begin{split}
|B_2| & \leq a|\beta_n(s)|\lVert \phi \rVert_{L^{\infty}([0,1/(4\pi)])}(1+16\pi M)^{\frac{15}{2}} \left| (1-4\pi a r)^5 - (1-4\pi\frac{c_n}{q_n}r)^5 \right|  \\
& \leq C_M\, \sum_{k=1}^5 r^k\,|a^k-(c_n/q_n)^k|
\end{split}
\end{equation*}
maybe renaming $C_M$. 
Here, $0 \leq r \leq  \beta_n(s) \leq 1/(4\pi)$, and also there exists $C>0$ such that $|a^k - (c_n/q_n)^k| \leq C\,|a-c_n/q_n|$ for every $k = 1,2,3,4,5$. Consequently, $ |B_2| \leq C_M\,  \left| a-c_n/q_n \right|$ , so 
\[ |B| \leq C_M \,  \left| a-\frac{c_n}{q_n} \right|. \]
 Joining it with \eqref{BoundForA}, we get
\[  \left| H_n(s) - H(s) \right| \leq C_M \, \left(  \left| a-\frac{c_n}{q_n} \right|^{1/2} + \left| a-\frac{c_n}{q_n} \right| \right) , \qquad \forall s \in [0,M]. \]
Therefore, we get the result
\[ \lim_{n \to \infty}{ \lVert H_n - H \rVert_{L^{\infty}([0,M])} } \leq C_M \lim_{n \to \infty}{\left( \left| a-\frac{c_n}{q_n} \right|^{1/2} + \left| a-\frac{c_n}{q_n} \right| \right)} = 0. \]
\end{proof}

\bibliographystyle{acm}
\bibliography{RiemannGeometryPartII.bib}

\end{document}